\theoremstyle{plain} 
\newtheorem{theorem}{\indent\bf Theorem}[section]
\newtheorem{lemma}[theorem]{\indent\bf Lemma}
\newtheorem{corollary}[theorem]{\indent\bf Corollary}
\newtheorem{proposition}[theorem]{\indent\bf Proposition}
\newtheorem{claim}[theorem]{\indent\bf Claim}
\theoremstyle{definition} 
\newtheorem{definition}[theorem]{\indent\bf Definition}
\newtheorem{remark}[theorem]{\indent\bf Remark}
\newtheorem{example}[theorem]{\indent\bf Example}
\DeclareMathOperator{\Div}{div}
\DeclareMathOperator{\oHess}{\overline{Hess}}
\DeclareMathOperator{\oRm}{\overline{Rm}}
\DeclareMathOperator{\inj}{inj}
\DeclareMathOperator{\Vol}{Vol}
\newcommand{\nab}{\nabla}
\newcommand{\Nab}{\overline{\nabla}}
\newcommand{\p}{\partial}
\newcommand{\Z}{\mathbb{Z}}
\newcommand{\R}{\mathbb{R}}
\newcommand{\C}{\mathbb{C}}
\newcommand{\nap}{\nabla^{\perp}}
\newcommand{\oR}{\overline{R}}
\newcommand{\ui}{\underline{i}}
\newcommand{\uj}{\underline{j}}
\newcommand{\uk}{\underline{k}}
\newcommand{\um}{\underline{m}}
\newcommand{\dmu}{d\mu}
\newcommand{\wnab}{\widetilde{\nab}}
\begin{document}
\pagestyle{plain}

\title[Stabilities and generalized Lagrangian mean curvature flow]{Hamiltonian stability for weighted measure and generalized Lagrangian mean curvature flow} 

\author{Toru Kajigaya and Keita Kunikawa}
\address{National Institute of Advanced Industrial Science and Technology (AIST), MathAM-OIL, 
Sendai 980-8577, Japan}
\email{kajigaya.tr@aist.go.jp}
\address{Advanced Institute for Materials Research, Tohoku University (AIMR), Sendai 980-8577, Japan
}
\email{keita.kunikawa.e2@tohoku.ac.jp}





%

\subjclass[2010]{Primary 53D12; Secondary 53C44.}
\date{\today}
\keywords{Lagrangian submanifolds; Fano manifolds; Hamiltonian stability; Generalized Lagrangian mean curvature flow}
\maketitle

\begin{abstract}
In this paper, we generalize several results for the Hamiltonian stability and the mean curvature flow of Lagrangian submanifolds in a K\"ahler-Einstein manifold to more general K\"ahler manifolds including a Fano manifold equipped with a K\"ahler form $\omega\in 2\pi c_1(M)$ by using the methodology proposed by T. Behrndt \cite{Beh}.  Namely, we first consider a weighted measure on a Lagrangian submanifold $L$ in a K\"ahler manifold $M$ and investigate the variational problem of $L$ for the weighted volume functional. We call a stationary point of the weighted volume functional $f$-minimal, and define the notion of Hamiltonian $f$-stability as a local minimizer under Hamiltonian deformations. We show such examples naturally appear in a toric Fano manifold. Moreover, we consider the generalized Lagrangian mean curvature flow in a Fano manifold which is introduced by Behrndt and Smoczyk-Wang. We generalize the result of H. Li, and show that if the initial Lagrangian submanifold is a small Hamiltonian deformation of an $f$-minimal and Hamiltonian $f$-stable Lagrangian submanifold, then the generalized MCF converges exponentially fast to an $f$-minimal Lagrangian submanifold.
 \end{abstract}

\section{Introduction}
Let $(M, \omega)$ be a symplectic manifold of ${\rm dim}_{\R}M=2n$, where $\omega$ is the symplectic form.  An $n$-dimensional submanifold $L$ in $M$ is called {\it Lagrangian} if $\omega|_L=0$, and Lagrangian submanifolds are investigated by several geometric motivations. When $M$ admits a Riemannian metric,  variational problems for  Lagrangian submanifolds are of particular interests. For example, Harvey-Lawson introduced the notion of special Lagrangian submanifolds in Calabi-Yau manifolds \cite{HL}, which is a submanifold with volume-minimizing property with respect to the Ricci-flat K\"ahler metric. In the Calabi-Yau setting, the special condition of $L$ is actually equivalent to the {\it minimality} of $L$, namely, $L$ is a critical point of the volume functional, and the minimality is described by vanishing mean curvature. See \cite{Joyce} for further developments and some construction methods of special Lagrangian submanifold in a Calabi-Yau manifold. 

On the other hand, the situation is different when $M$ admits a K\"ahler structure with K\"ahler metric $g$ of positive Ricci curvature.   In fact, Lawson-Simons \cite{LS} proved that, in the complex projective space $\mathbb{C}P^n$ with the Fubini-Study metric, a stable submanifold for the volume functional is actually a complex submanifold, and hence, any minimal Lagrangian submanifold in $\mathbb{C}P^n$ is never stable in the usual sense because the Lagrangian condition implies the tangent space of $L$ does not contain any complex distribution. In such a remarkable situation, Y.-G. Oh studied a variational problem for Lagrangian submanifold in K\"ahler-Einstein manifold under restricted deformations \cite{Oh1, Oh2, Oh3}. Namely, he considered Hamiltonian deformations of $L$, which is a fundamental concept in symplectic geometry, and proved that there exist several examples of stable minimal Lagrangian submanifolds under Hamiltonian deformations. We call such a stable submanifold {\it Hamiltonian-stable Lagrangian submanifold}. See \cite{MO} and references therein for more examples of minimal and Hamiltonian-stable Lagrangian submanifold in a K\"ahler-Einstein manifold.

The {\it mean curvature flow} ({\it MCF} for short) is a fundamental concept to seek a minimal submanifold. It is known due to Smoczyk \cite{Smo3} that the MCF of Lagrangian submanifold $L$ in a K\"ahler-Einstein manifold preserves the Lagrangian condition, and hence, we call the flow \textit{Lagrangian mean curvature flow} (\textit{LMCF} for short).  Since MCF is a negative $L^2$ gradient flow of the volume functional of a submanifold, we naively expect that LMCF converges to a minimal and some kind of stable Lagrangian.  In fact, Thomas-Yau conjectured long-time existence and convergence of zero-Maslov LMCF in a Calabi-Yau manifold under some stability condition \cite{TY}. 
After that, Joyce gave further studies in Calabi-Yau setting and improved Thomas-Yau conjecture (see \cite{Joyce2}, \cite{Nev1} and references therein for the details). Although this conjecture is still widely open, some relevant results are known under restricted conditions. In \cite{Smo3} and \cite{SmW}, Smoczyk and Smoczyk-Wang proved long-time existence and convergence results of LMCF in a flat space under some convexity conditions. 


More generally, as pointed out by Smoczyk \cite{Smo2}, the LMCF in a K\"ahler-Einstein manifold generates a Hamiltonian deformation of $L$ whenever the mean curvature form of the initial Lagrangian $L$ is exact (or zero-Maslov). Thus, it is natural to ask whether the LMCF of exact Lagrangian exists for all time and converges to a Hamiltonian-stable minimal Lagrangian submanifold.
In this direction, there are still few convergence results of LMCF in general K\"ahler-Einstein manifolds (including Ricci-positive case).  For examples, Wang showed a convergence results for a graph \cite{Wan1}, and Li  showed some stability results of LMCF, that is, LMCF with exact mean curvature form converges to a minimal Lagrangian if the initial data is sufficiently close to a Hamiltonian-stable minimal Lagrangian \cite{Li}. 

So far, many studies of the Hamiltonian-stability of Lagrangian submanifolds and Lagrangian MCF are in the K\"ahler-Einstein setting and the volume is measured by the K\"ahler metric. However, the K\"ahler-Einstein condition is very restrictive despite of the facts that the concepts of Lagrangian submanifolds and Hamiltonian deformations are purely symplectic.  
Recently, Smoczyk-Wang \cite{SmW} introduced the notion of {\it Einstein connection}, and showed that some results  can be extended to  a Lagrangian submanifold $L$ in an almost K\"ahler manifold with Einstein connection when we consider {\it generalized mean curvature} instead of the mean curvature of $L$.  For example, any cotangent bundle $T^*N$ of a manifold $N$ admits a canonical Einstein connection, and Smoczyk-Tsui-Wang \cite{STW} proved a convergence result for generalized mean curvature flow. On the other hand, Lotay-Pacini \cite{LP} gave another general framework based on the results of Oh, in which they extended several results to {\it totally real} submanifolds in an almost Hermitian manifold, and  an interesting notion, the $J$-volume for totally real submanifold, was studied in \cite{LP}. Our results in the present paper intersect some of these results, however, our formulation is slightly different from theirs and more suitable for our purpose.

For a general symplectic manifold, it is not heuristic to measure the volume by the (almost) K\"ahler metric, whereas the (almost) K\"ahler metric is a {\it canonical} choice of metric and it is of course the best choice in a K\"ahler-Einstein manifold. In this direction, Behrndt introduced the notion of {\it almost-Einstein} K\"ahler manifold, that is, the Ricci form $\rho$ satisfies $\rho=C\omega+ndd^cf$ for some constant $C\in \mathbb{R}$ and function $f\in C^{\infty}(M)$ and studied  Lagrangian submanifolds in an {\it almost} Calabi-Yau manifold as an extension of the theory of Calabi-Yau manifold  \cite{Beh}. In his studies, he pointed out that to use a globally conformal K\"ahler metric or a weighted measure on $M$ is more suitable to investigate Lagrangian submanifolds in an almost-Einstein K\"ahler manifold.  Furthermore, the first author showed that it is natural to consider the weighted metric or measure in order to describe some geometric properties of Lagrangian submanifold in a K\"ahler quotient space (see \cite{Kaji}).

Notice that the almost-Einstein manifold is still restrictive and is actually included in the more general notion due to Smoczyk-Wang (see \cite{SmW}). However, the K\"ahler condition has an advantage in many situations, and almost-Einstein K\"ahler manifolds contain some important classes of K\"ahler manifolds, namely, an almost Calabi-Yau manifold and a Fano manifold equipped with a K\"ahler form $\omega\in 2\pi c_1(M)$, where $c_1(M)$ is the first Chern class.  Moreover, a symplectic manifold $(M, \omega)$ with compatible almost complex structure $J$ satisfying $\omega\in \lambda c_1(M)$ for some positive constant $\lambda$ is called {\it monotone} and specifically studied in symplectic geometry (See \cite{Oh4}).
Besides the work of Behrndt, it is still unclear how the successful results in K\"ahler-Einstein manifolds is extended to Fano manifolds or more general class of manifolds by using Behrndt's methodology.  One of purpose in the present paper is to  confirm this question and exhibit the extended results.


 We briefly summarize our main results in the present paper.  Let $(M, \omega, J)$ be a K\"ahler manifold and $L$  a Lagrangian submanifold in $M$. We denote the compatible K\"ahler metric by $g$. For a function $f\in C^{\infty}(M)$,  we consider a weighted measure $dv_f=e^{nf}dv_L$ on $L$, where $dv_L$ is the volume form w.r.t. $g$, and investigate the variational problem of $L$ for the weighted volume functional ${\rm Vol}_{g_f}(L):=\int_Ldv_f$. We call  a Lagrangian submanifold  {\it $f$-minimal} if it is a critical point of ${\rm Vol}_{g_f}$ under any infinitesimal deformations. By the first variational formula, $L$ is $f$-minimal if and only if $H_f:=e^{-2f}(H-n\Nab f^{\perp})=0$, where $H_f$ is the mean curvature vector of $L$ w.r.t. $g_f$. Instead of using $H_f$, we define a modified vector
 \begin{align*}
 K:=H-n\Nab f^{\perp}.
 \end{align*}
Following Behrndt \cite{Beh} and Smoczyk-Wang \cite{SmW}, we call $K$ {\it generalized mean curvature vector} of $L$ for the weight function $f$. By definition, the $f$-minimality is equivalent to $K=0$.  Moreover, an $f$-minimal Lagrangian submanifold is called {\it Hamiltonian $f$-stable} if the second variation of ${\rm Vol}_{g_f}$ is non-negative under any Hamiltonian deformations.  Obviously, these notions coincides with the usual ones considered in \cite{Oh1, Oh2} when $f=0$.


If the K\"ahler form $\omega$ of $M$ satisfies $\rho=C\omega+ndd^cf_{\omega}$ for some function $f_{\omega}\in C^{\infty}(M)$,  the potential function $f_\omega$ is a reasonable choice of the weight function. In fact, some $f_\omega$-minimal Lagrangian submanifolds naturally appears as a generalization of minimal Lagrangian submanifolds in a K\"ahler-Einstein manifold. Moreover, we show the generalized mean curvature form $\alpha_{K}:=(i_{K}\omega)|_{L}$ of arbitrary Lagrangian submanifold $L$ is a closed form and the de-Rham cohomology class $[\alpha_{K}]$ is a Hamiltonian invariant of $L$, namely, $[\alpha_{K}]$ is regarded as a generalization of the classical Maslov class (See Section 2.4). In particular, it turns out that any $f_\omega$-minimal Lagrangian submanifold in $M$ is monotone in the sense of \cite{Oh4}.  Furthermore, we give a simple criterion for the Hamiltonian $f_\omega$-stability of $f_\omega$-minimal Lagrangian submanifold as a generalization of the result of Chen-Leung-Nagano \cite{CLN} and Oh \cite{Oh1}, that is,  the Hamiltonian $f_\omega$-stability is equivalent to 
\begin{align*}
\lambda_1(\Delta_{f_\omega})\geq C, 
\end{align*}
where $\lambda_1$ is the first eigenvalue of the weighted Laplace operator $\Delta_{f_\omega}$ acting on $C^{\infty}(L)$ (Theorem \ref{hsta}). 

For example, we consider a torus orbit in a weighted projective space $\mathbb{C}P^n_{\bf a}$ for positive integers ${\bf a}:=(1, a_2,\ldots, a_{n+1})$. Since $\mathbb{C}P^n_{\bf a}$ is obtained by a weighted $S^1$-Hamiltonian action on  $\mathbb{C}^{n+1}$, the standard K\"ahler reduction yields a canonical K\"ahler structure $(\omega_{\bf a}, J_{\bf a})$ on $\mathbb{C}P^n_{\bf a}$, and $\omega_{\bf a}$ defines a canonical potential function $f_{\bf a}$ (See Section 3). We prove that the reduced manifold $T^{n+1}/S^1$ of the Clifford torus $T^{n+1}$ in $\mathbb{C}^{n+1}$ by the weighted $S^1$-action is an $f_{\bf a}$-minimal and Hamiltonian $f_{\bf a}$-stable Lagrangian submanifold in $\mathbb{C}P^n_{\bf a}$ (Theorem \ref{clifford}). This generalizes the theorem by Oh \cite{Oh1} in which he proved the statement for the Clifford torus in the complex projective space $\mathbb{C}P^n$.

In the latter half of this paper, we consider \textit{generalized Lagrangian mean curvature flow} ({\textit{GLMCF} for short) in a compact K\"ahler manifold with $\rho=C\omega+ndd^cf_\omega$, namely, a family of immersions $F: L\times [0, T) \rightarrow M$ satisfying   
\begin{align*}
\frac{\p F}{\p t}(x, t)=K(x, t), \ \ F(x, 0)=\phi(x),  
\end{align*}
where $\phi: L\rightarrow M$ is a Lagrangian immersion. This flow is proposed by Behrndt \cite{Beh} and he showed that the flow preserves Lagrangian condition (see also \cite{SmW}). Moreover if the initial Lagrangian is exact, i.e., $[\alpha_{K_0}]=0$, then we prove that GLMCF preserves exactness of $\alpha_{K_t}$. In particular, GLMCF generates a Hamiltonian deformation of the exact initial data. This is an extension of the results by Smoczyk in K\"ahler-Einstein manifolds. These facts lead us to an expectation similar to the K\"ahler-Einstein case, that is, the GLMCF may converge to an $f_\omega$-minimal and Hamiltonian $f_\omega$-stable Lagrangian.  

In the present paper, we confirm this question under some initial conditions. More precisely, if an initial Lagrangian $L$ is exact, $\lambda_1(\Delta_{f_\omega})>C$ and has sufficiently small weighted $L^2$ norm of $K$, then the GLMCF with initial data $L$ exists for all time and converges exponentially fast to an $f_\omega$-minimal Lagrangian with $\lambda_1(\Delta_{f_\omega})>C$ (Theorem \ref{cvthm}). Examples satisfying the initial conditions of this theorem are given by a small Hamiltonian deformation of some torus orbits constructed above. 

On the other hand, there exists an $f_\omega$-minimal Lagrangian with $\lambda_1(\Delta_{f_\omega})=C$ and this is not the case of Theorem \ref{cvthm}. In order to deal with this case, we consider small Hamiltonian deformation of $f_\omega$-minimal Lagrangian with $\lambda_1(\Delta_{f_\omega})=C$ as an initial data of the flow. Then we prove that the GLMCF exists for all time and converges exponentially fast to an $f_\omega$-minimal Lagrangian (Theorem \ref{main2}).  The main techniques we used in our proof are generalization of the ones developed by Li \cite{Li} in K\"ahler-Einstein manifolds.

The paper is organized as follows. In Section 2, we define notions of $f$-minimality and Hamiltonian $f$-stability for Lagrangian submanifold $L$ in a K\"ahler manifold $M$ with weighted measure, and derive the first and second variational formula of the weighted volume functional under Hamiltonian deformations of $L$. In Section 3, we give examples of $f$-minimal and Hamiltonian $f$-stable Lagrangian torus orbits in the weighted projective spaces. In Section 4, we introduce the generalized Lagrangian mean curvature flow and show some fundamental properties of GLMCF.  In Section 5, we give several estimates for GLMCF. Finally, in Section 6 and 7, we prove the convergence results of GLMCF.

\subsection*{Acknowledgements} 
The authors would like to thank Shouhei Honda and Hikaru Yamamoto for helpful comments. 
A part of this work was done while the first author was staying at University of T\"ubingen by the JSPS Program for Advancing Strategic International Networks to Accelerate the Circulation of Talented Researchers, Mathematical Science of Symmetry, Topology and Moduli, Evolution of International Research Network based on OCAMI. He is grateful for the hospitality of the University. 
The second author is supported by the Grant-in-Aid for JSPS Fellows (16J01498). During the preparation of this paper he has stayed in Max Planck Institute for Mathematics in the Sciences, Leipzig. He is grateful to J\"urgen Jost for the hospitality. 

\section{Hamiltonian stabilities for Lagrangian submanifolds}
In this section, we introduce notions of $f$-minimality and Hamiltonian $f$-stability of Lagrangian submanifold in a K\"ahler manifold with weighted measure.

\subsection{Hamiltonian minimality and stability for weighted measure}
Let $(M, \omega, J)$ be a complex $n$-dimensional (almost) K\"ahler manifold, where $\omega$ is the symplectic form and $J$ is the (almost) complex structure. We define the compatible  metric $g$ by $\omega(\cdot, \cdot):=g(J\cdot,\cdot)$. Consider a Lagrangian immersion $\phi:L\rightarrow M$ of a manifold $L$, that is, $\phi^*\omega=0$ and $\dim_{\R}L=n$. Then, we have the following isomorphisms which we shall use throughout this article:
\begin{align}\label{isomorphism}
T_pL\ \tilde{\rightarrow}\ T_p^{\perp}L\ \tilde{\rightarrow}\ T_p^*L, \quad V\mapsto JV\mapsto \alpha_V:=\phi^*(i_V\omega)
\end{align} 
for any $p\in L$, where $T_p^{\perp}L$ is the normal space of $T_pL$ in $T_pM$ w.r.t. $g$ and $i$ denotes the inner product. For a smooth function $f\in C^{\infty}(M)$, we consider a globally conformal K\"ahler metric $g_f:=e^{2f}g$ of $g$. 
The induced metric $\phi^*g$ on $L$ is denoted by the same symbol $g$.  Also, the pull-back $\phi^*f$ is denoted by $f\in C^\infty (L)$ if there is no confusion from contexts. Denote the volume form on $L$ w.r.t. $g$ and $g_f$ by $d\mu$ and $d\mu_f$, respectively. Note that we have $d\mu_f=e^{nf}d\mu$. In the following,  we  consider a weighted metric measure space $(L, g, d\mu_f=e^{nf}d\mu)$.

 We shall consider a variational problem on a weighted metric measure space $(L, g, d\mu_f)$ w.r.t. a weighted volume functional 
 \begin{align*}
 \Vol_f(\phi):=\int_Ld\mu_f=\int_Le^{nf}d\mu.
 \end{align*}
  It is easy to see that the first variational formula is given by
 \begin{align}\label{firstv}
&\frac{d}{ds}\Big{|}_{s=0}\Vol_f(\phi_s)=-\int_{L}g(K, V)d\mu_f,\quad {\rm where}\quad K:=H-n(\Nab f)^\perp
\end{align}
for any infinitesimal deformation $\phi_s$ of $\phi=\phi_0$ and $H$ is the mean curvature vector of $\phi$ w.r.t. $g$. Note that the mean curvature vector of $\phi$ w.r.t. $g_f$ is given by $H_f=e^{-2f}\{H-n(\Nab f)^{\perp}\}$. In particular, $\phi$ is a critical point of the weighted volume functional if and only if $K=0$. 

Moreover, we set
\begin{align*}
\alpha_{K}:&=\phi^*(i_{K}\omega)=\alpha_H-n\phi^*d^cf.
\end{align*}
According to \cite{Beh} and \cite{SmW}, we call $K$ and $\alpha_{K}$ \textit{generalized mean curvature vector} and \textit{generalized mean curvature form}, respectively.

We shall consider a variational problem for a restricted deformation.  
A smooth deformation $\phi_s:(-\epsilon, \epsilon)\times L\rightarrow M$ of a Lagrangian immersion $\phi_0=\phi:L\rightarrow (M, \omega)$ into a symplectic manifold is called \textit{Hamiltonian deformation} if the variational vector field $V_s:=(d/ds)\phi_s$ is a Hamiltonian vector field along $\phi_s$ for $s\in (-\epsilon, \epsilon)$, i.e., there exists $u_s\in C^\infty(L)$ so that $\alpha_{V_s}=du_s$ for each $s$.  By Cartan's formula, it is easy to see that the Hamiltonian deformation preserves the Lagrangian property.

The following definitions are extensions of the notions in \cite{Oh1} and \cite{Oh2}. 
\begin{definition}
Let $\phi:L\rightarrow M$ be a Lagrangian immersion into an (almost) K\"ahler manifold. We consider a weighted volume functional ${\rm Vol}_{g_f}$ for $f\in C^\infty(M)$. 
\begin{enumerate}
\item $\phi$ is called \textit{$f$-minimal} if $\phi$ is a critical point of ${\rm Vol}_{g_f}$ under any infinitesimal deformation of $\phi$, i.e., $K=H_f\equiv 0$, or equivalently, $\alpha_{K}=0$.

\item $\phi$ is called \textit{Hamiltonian $f$-minimal} if $\phi$ is a critical point of ${\rm Vol}_{g_f}$ under any Hamiltonian deformation of $\phi$. 

\item A Hamiltonian $f$-minimal $\phi$ is called \textit{Hamiltonian $f$-stable} if the second variation of ${\rm Vol}_{g_f}$ is nonnegative for any Hamiltonian deformation of $\phi$. 
\end{enumerate}
\end{definition}




We shall derive the Euler-Lagrange equation for the Hamiltonian minimal Lagrangian submanifolds. The following notations are more suitable for the Bakry-\'Emery calculation. Namely, we define a \textit{weighted co-differential} $\delta_f\alpha$ acting on $\Omega^1(L)$ by 
\begin{align*}
\delta_f\alpha:=-e^{-nf}\Div(e^{nf}\alpha^\sharp)=\delta\alpha-ng(df, \alpha), \ \ \alpha\in\Omega^1(L), 
\end{align*}
where $\alpha^\sharp\in\Gamma(TL)$ is the counterpart of $\alpha \in \Omega^1(L)$ by the canonical isomorphism via the metric $g$ on $L$. By the divergence theorem, $\delta_f$ is in fact the co-differential of $d$ w.r.t. $d\mu_f$: 
\begin{align*}
\int_L(\delta_f\alpha)ud\mu_f=\int_Lg(\alpha, du)d\mu_f, \ \ u\in C^\infty_c(L).
\end{align*}
For a smooth function $u\in C^\infty(L)$, we define a weighted Laplacian $\Delta_f$ acting on $C^\infty(L)$ by 
\begin{align*}
\Delta_fu:=-\delta_fdu=e^{-nf}\Div(e^{nf}\nab u)=\Delta u+ng(\nab f, \nab u), \ \ \ \  f\in C^\infty(L),  
\end{align*}
where $\Delta=\Div \nab=-\delta d$ is the Laplace-Beltrami operator, i.e., negative Hodge-de Rham Laplacian acting on $C^\infty(L)$ and $n=\dim_{\R}L$. By the divergence theorem, we have the following formula:
\begin{align}
\int_L(\Delta_f u_1)u_2 d\mu_f=-\int_Lg(du_1, du_2)d\mu_f=\int_L u_1(\Delta_f u_2) d\mu_f, \label{stokes}
\end{align}
for any compactly supported functions $u_1, u_2\in C^\infty_c(L)$.

\begin{proposition}\label{EL}
A Lagrangian immersion $\phi: L\rightarrow M$ into an almost K\"ahler manifold is Hamiltonian $f$-minimal if and only if 
\begin{align*}
\delta_f\alpha_K=0.
\end{align*}
\end{proposition}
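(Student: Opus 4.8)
The plan is to read off the Euler--Lagrange equation directly from the first variational formula \eqref{firstv}, specializing the generic variation field there to a Hamiltonian one and then integrating by parts against the weighted measure $d\mu_f$. First I would recall that a Hamiltonian deformation has variation field $V$ whose associated $1$-form satisfies $\alpha_V=\phi^*(i_V\omega)=du$ for some $u\in C^\infty(L)$. Writing $V=V^\top+V^\perp$, the tangential part obeys $\omega(V^\top,Y)=0$ for every $Y\in TL$ by the Lagrangian condition, so $\alpha_{V^\top}=0$ and hence $\alpha_V=\alpha_{V^\perp}$; only the normal component enters. Substituting into \eqref{firstv} gives
\begin{align*}
\frac{d}{ds}\Big|_{s=0}\Vol_f(\phi_s)=-\int_L g(K,V)\,d\mu_f=-\int_L g(K,V^\perp)\,d\mu_f,
\end{align*}
since $K$ is normal.

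The key algebraic step, which I expect to be the main (if modest) point, is the pointwise identity $g(K,V^\perp)=g(\alpha_K,\alpha_V)$, equating an inner product of normal vectors with one of $1$-forms on $L$. This comes straight from the isomorphisms \eqref{isomorphism}: for a normal vector $W=JX$ with $X\in TL$ and any $Y\in TL$ one computes $\alpha_W(Y)=\omega(W,Y)=g(JW,Y)=-g(X,Y)$, so $\alpha_W=-X^\flat$. Because $J$ is a $g$-isometry, the correspondence $W\mapsto\alpha_W$ is an isometry from $T^\perp L$ to $T^*L$ (the two sign contributions cancel), whence $g(K,V^\perp)=g(\alpha_K,\alpha_{V^\perp})=g(\alpha_K,du)$.

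Finally I would invoke the adjointness of $\delta_f$ and $d$ with respect to $d\mu_f$ established just before the statement to rewrite the integral:
\begin{align*}
\frac{d}{ds}\Big|_{s=0}\Vol_f(\phi_s)=-\int_L g(\alpha_K,du)\,d\mu_f=-\int_L(\delta_f\alpha_K)\,u\,d\mu_f.
\end{align*}
By definition $\phi$ is Hamiltonian $f$-minimal exactly when this first variation vanishes for every admissible $u$, and by the fundamental lemma of the calculus of variations this is equivalent to $\delta_f\alpha_K=0$, giving both implications at once. The only care needed is to match the class of test functions $u$ (compactly supported, or all of $C^\infty(L)$ when $L$ is closed) with the class for which the defining adjointness formula for $\delta_f$ was stated, so that nondegeneracy of the $L^2(d\mu_f)$ pairing applies.
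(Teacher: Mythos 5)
Your proposal is correct and follows essentially the same route as the paper: plug a Hamiltonian variation into the first variational formula \eqref{firstv}, identify $g(K,V)$ with $g(\alpha_K,du)$ via the isomorphism \eqref{isomorphism}, integrate by parts using the adjointness of $\delta_f$ and $d$ with respect to $d\mu_f$, and conclude since $u$ is arbitrary. The only difference is cosmetic: the paper simply restricts to normal variation fields at the outset, whereas you decompose $V=V^\top+V^\perp$ and check that the tangential part contributes nothing, and you spell out the isometry $T^\perp L\to T^*L$ that the paper uses implicitly.
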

\begin{proof}
Suppose $V:=(d/ds)|_{s=0}\phi_s$ is a normal vector field. Since $V$ is a Hamiltonian vector field along $L$, there exists a function $u\in C^\infty(L)$ so that $\alpha_V=\phi^*(i_X\omega)=du$. Then the general first variation formula implies \begin{align*}
\frac{d}{ds}\Big|_{s=0}\Vol_f(\phi_t)&=-\int_Lg_f(H_f, V)d\mu_f=-\int_Lg(K, V)d\mu_f\\
&=-\int_Lg(\alpha_K, du)d\mu_f=-\int_L(\delta_f\alpha_K)ud\mu_f. 
\end{align*}
Since $u\in C^\infty(L)$ is arbitrary, the notion of Hamiltonian minimality is equivalent to $\delta_f\alpha_K=0$. 
\end{proof}

A Hamiltonian minimality for Lagrangian submanifolds in almost Calabi-Yau manifolds has already been introduced by Yamamoto in \cite{Yam}. In the almost Calabi-Yau setting, our definition is equivalent to his definition under a suitable choice of  g.c.K. metric (see below). 

The following proposition shows that we obtain a Hamiltonian $f$-minimal Lagrangian submanifold as a compact orbit of a Lie group action:

\begin{proposition}\label{homoge}
Let $(M,\omega, J)$ be a K\"ahler manifold and $G$ a connected compact subgroup of $\textup{Aut}(M, \omega, J)^0$ an identity component of the automorphism group of $(M,\omega, J)$. Suppose the $G$-action admits a Lagrangian orbit.  Then, any compact Lagrangian orbit is Hamiltonian $f$-minimal for any $G$-invariant function $f\in C^\infty(M)$.
\end{proposition}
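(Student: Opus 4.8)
The plan is to reduce the statement, via Proposition \ref{EL}, to verifying that the generalized mean curvature form of a compact Lagrangian orbit satisfies $\delta_f\alpha_K\equiv 0$, and to extract this vanishing from the homogeneity of the orbit. Write $L=G\cdot p$ for the compact Lagrangian orbit and let $\phi:L\hookrightarrow M$ be the inclusion; since $G\subset\textup{Aut}(M,\omega,J)^0$, every $\psi\in G$ preserves $\omega$, $J$, and hence the compatible metric $g$, and carries $L$ onto itself so that $\psi\circ\phi=\phi\circ(\psi|_L)$.

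First I would show that $\alpha_K$ is a $G$-invariant $1$-form on $L$, i.e.\ $(\psi|_L)^*\alpha_K=\alpha_K$ for all $\psi\in G$. Because $\psi$ is an isometry of $(M,g)$ mapping $L$ onto $L$, it carries the second fundamental form of $L$ to itself, so the mean curvature vector is equivariant, $\psi^*H=H$; combined with $\psi^*\omega=\omega$ and the naturality of the interior product this gives $(\psi|_L)^*\alpha_H=\phi^*\psi^*(i_H\omega)=\phi^*(i_H\omega)=\alpha_H$. Since $f$ is $G$-invariant and $\psi$ preserves $J$ and commutes with $d$, the form $d^cf$ is $G$-invariant, whence $(\psi|_L)^*(\phi^*d^cf)=\phi^*d^cf$. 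As $\alpha_K=\alpha_H-n\phi^*d^cf$, it is $G$-invariant.

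Next I would observe that the weighted co-differential $\delta_f$ is $G$-equivariant on $L$: each $\psi|_L$ is an isometry of $(L,g)$ preserving the restriction of $f$, and $\delta_f\alpha=\delta\alpha-ng(df,\alpha)$ is built only from $g$ and $f$, both preserved, so $(\psi|_L)^*(\delta_f\alpha)=\delta_f\big((\psi|_L)^*\alpha\big)$. Applying this to the invariant form $\alpha_K$ shows that $\delta_f\alpha_K\in C^\infty(L)$ is a $G$-invariant function. Since $G$ acts transitively on the orbit $L$, this function is constant, say $\delta_f\alpha_K\equiv c$.

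To conclude that $c=0$, I would integrate against the constant test function using the divergence identity
\[
\int_L(\delta_f\alpha_K)\,u\,d\mu_f=\int_Lg(\alpha_K,du)\,d\mu_f,\qquad u\in C^\infty(L),
\]
with $u\equiv 1$: the right-hand side vanishes, while the left-hand side equals $c\,\Vol_f(L)$. As $L$ is compact, $\Vol_f(L)>0$, forcing $c=0$, hence $\delta_f\alpha_K\equiv 0$; Proposition \ref{EL} then yields Hamiltonian $f$-minimality. The only genuine work lies in the equivariance bookkeeping of the first two steps — the naturality of $H$, of $\alpha_H$, and of $\delta_f$ under the $G$-action — which is routine once the pullbacks are tracked carefully; the conceptual crux is simply that a $G$-invariant function on a homogeneous space is constant, and that this constant is then killed by integration against $d\mu_f$.
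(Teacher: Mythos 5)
Your proposal is correct and follows essentially the same route as the paper's own proof: $G$-invariance of $\alpha_K$ (the paper deduces it from invariance of $g_f=e^{2f}g$, you track $\alpha_H$ and $d^cf$ separately, which is equivalent), constancy of $\delta_f\alpha_K$ by transitivity, and vanishing of the constant via the divergence theorem on the compact orbit, followed by Proposition \ref{EL}.
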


\begin{proof}
Since $f$ is $G$-invariant, so is $g_{f}=e^{2f}g$. Thus the generalized mean curvature form $\alpha_K$ of the Lagrangian orbit $\mathcal{O}=G\cdot p$ is an $G$-invariant $1$-form, and hence, $\delta_{f}\alpha_K$ is an $G$-invariant function. Since $G$ acts on $\mathcal{O}$ transitively, the function is constant on $L$. Moreover, by the compactness of $L$, the divergence theorem implies 
\begin{align*}
\int_{\mathcal{O}}\delta_{f}\alpha_Kd\mu_f=0, 
\end{align*}
and hence, $\delta_{f}\alpha_K=0$. This proves (1). 
\end{proof}

\subsection{Second variation formula}\label{2ndv}
In the following, we always assume $(M,\omega, J)$ is a K\"ahler manifold.  First, we derive a general second variational formula for a Lagrangian immersion.

 Let $\Nab$ and $\nab$ be Levi-Civita connections on $TM$ and $TL$ w.r.t. $g$, respectively. We adopt the following definition for the curvature tensor on $TM$ by
\begin{align*}
\oR(X, Y)Z:=\Nab_X\Nab_YZ-\Nab_Y\Nab_XZ-\Nab_{[X, Y]}Z, 
\end{align*}
for $X, Y, Z\in\Gamma(TM)$. We also define a $(0, 4)$ curvature tensor by 
\begin{align*}
\oRm(Z, W, X, Y):=g(Z, R(X, Y)W). 
\end{align*}
Other curvature tensors will be defined likewise. We denote the Ricci form of $M$ by $\rho$. 

For a Lagrangian immersion $\phi: L\rightarrow M$, the second fundamental form is denoted by $B$, and the mean curvature vector is given by $H:={\rm tr}B$. 

\begin{proposition}\label{2ndvf}
Let $\phi:L\rightarrow M$ be a Lagrangian immersion into a K\"ahler manifold.  Then, for any normal deformation $\{\phi_s\}$ with $V=(d/ds)|_{s=0}\phi_s\in \Gamma(T^{\perp} L)$, we have 
\begin{align*}
\frac{d^2}{ds^2}\Big|_{s=0}&\Vol_f(\phi_s)=\int_L\{|\delta_f\alpha_V|^2-\rho_f(V,JV)-g(B(JV, JV), K)-g(\Nab_VV,K) +g(V,K)^2\}d\mu_f, 
\end{align*}
where ${\rho}_f:=\rho-ndd^cf$. 
\end{proposition}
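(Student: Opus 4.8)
The plan is to differentiate the first variation formula \eqref{firstv} a second time and to identify each resulting term. Write $F\colon(-\epsilon,\epsilon)\times L\to M$, $F(s,\cdot)=\phi_s$, with variation field $\p_s:=F_*(\p/\p s)$ (so that $\p_s|_{s=0}=V$), and let $K_s$ and $d\mu_{f,s}=e^{nf}d\mu_s$ be the generalized mean curvature vector and the weighted volume element of $\phi_s$. Since $\frac{d}{ds}\Vol_f(\phi_s)=-\int_Lg(K_s,\p_s)\,d\mu_{f,s}$ holds for every $s$, differentiating and setting $s=0$ gives
\begin{align*}
\frac{d^2}{ds^2}\Big|_{0}\Vol_f(\phi_s)=-\int_L\Big(g(\Nab_{\p_s}K_s,V)+g(K,\Nab_{\p_s}\p_s)\Big)\Big|_{0}\,d\mu_f-\int_Lg(K,V)\,\frac{d}{ds}\Big|_{0}d\mu_{f,s}.
\end{align*}
Two contributions are immediate. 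With $\Nab_{\p_s}\p_s|_{0}=\Nab_VV$ (the acceleration of the deformation) we obtain $-\int_Lg(\Nab_VV,K)\,d\mu_f$; and since $V$ is normal, the weighted first variation of the measure is $\frac{d}{ds}|_{0}d\mu_{f,s}=\big(ng(\Nab f,V)-g(H,V)\big)d\mu_f=-g(K,V)\,d\mu_f$, which yields $+\int_Lg(V,K)^2\,d\mu_f$.

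It then remains to establish the principal identity
\begin{align*}
-\int_Lg(\Nab_{\p_s}K|_{0},V)\,d\mu_f=\int_L\big(|\delta_f\alpha_V|^2-\rho_f(V,JV)-g(B(JV,JV),K)\big)\,d\mu_f.
\end{align*}
Splitting $K=H-n(\Nab f)^\perp$, I would linearize each piece. The variation of the mean curvature vector is the standard Jacobi computation: commuting $\Nab_{\p_s}$ past the tangential derivatives by means of $[\p_s,\p_i]=0$ and the ambient curvature tensor expresses $\Nab_{\p_s}H|_{0}$ as a normal connection-Laplacian of $V$, plus a curvature contraction and a quadratic second-fundamental-form term. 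Because $J$ is parallel and $L$ is Lagrangian, the isomorphism \eqref{isomorphism} identifies $V$ with the tangent field $JV=\alpha_V^\sharp$ and the normal connection with $\nab$, so that $|\Nab^\perp V|^2=|\nab\alpha_V|^2$; the variation of $(\Nab f)^\perp$ produces the normal Hessian of $f$, i.e. the $dd^cf$ correction.

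I would then pair with $V$, integrate against $d\mu_f$, and simplify by the weighted integration by parts \eqref{stokes} together with the Bochner--Weitzenb\"ock formula $\int_L|\nab\alpha_V|^2\,d\mu=\int_L(|d\alpha_V|^2+|\delta\alpha_V|^2-\Ric^L(JV,JV))\,d\mu$. Here the closedness $d\alpha_V=\frac{d}{ds}|_{0}\phi_s^*\omega=0$, which holds because the deformation preserves the Lagrangian condition (Cartan's formula, as already noted for Hamiltonian deformations), removes the exact term, and after restoring the weight only the perfect square $\int_L|\delta_f\alpha_V|^2\,d\mu_f$ survives from this part. The intrinsic term $\Ric^L(JV,JV)$ is eliminated by the Gauss equation; its purely ambient part combines with the curvature contraction from the Jacobi term into the K\"ahler identity $\sum_ig(V,\oR(V,e_i)e_i)+\sum_ig(JV,\oR(JV,e_i)e_i)=\rho(V,JV)$ (equivalent to $d\alpha_H=\phi^*\rho$), the cross-term $g(B(JV,JV),H)$ and the $B$-contribution from the variation of $(\Nab f)^\perp$ assemble into $-g(B(JV,JV),K)$, and the Hessian correction upgrades $\rho$ to $\rho_f=\rho-n\,dd^cf$ and $\delta$ to $\delta_f$.

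The main obstacle is exactly this reorganization: showing that, after weighted integration by parts, the Bochner quadratic form collapses to the perfect square $|\delta_f\alpha_V|^2$ rather than leaving a genuine gradient term. This collapse is special to the K\"ahler--Lagrangian setting and relies essentially on the parallelism of $J$, the total symmetry of the cubic form $g(B(\cdot,\cdot),J\cdot)$, the closedness $d\alpha_V=0$, and the K\"ahler curvature identity packaging the ambient terms into $\rho$; keeping the tangential and normal splittings and the weight $e^{nf}$ consistent through the Ricci-identity commutations is the delicate bookkeeping. As a check, setting $K=0$ and $f=0$ recovers Oh's formula, and in the K\"ahler--Einstein case with $\rho_f=C\omega$ the right-hand side becomes $\int_L\big((\Delta u)^2-C|\nab u|^2\big)\,d\mu$ for a Hamiltonian field with $\alpha_V=du$, consistent with Theorem \ref{hsta}.
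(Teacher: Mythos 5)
Your reduction is the right skeleton and in fact mirrors the paper's: differentiating the first variation formula \eqref{firstv}, peeling off $-\int_Lg(\Nab_VV,K)\,d\mu_f$ and $+\int_Lg(V,K)^2\,d\mu_f$, and isolating a principal identity for $-\int_Lg(\Nab_VK,V)\,d\mu_f$ is exactly \eqref{var1}--\eqref{third} written covariantly, and your principal identity is the integrated form of the paper's Lemma \ref{Ldsa}. The genuine gap is the justification you give for discarding the $|d\alpha_V|^2$ term produced by Bochner--Weitzenb\"ock: you claim $d\alpha_V=\frac{d}{ds}\big|_0\phi_s^*\omega=0$ ``because the deformation preserves the Lagrangian condition.'' A normal deformation of a Lagrangian does \emph{not} in general preserve the Lagrangian condition, even to first order: Cartan's formula gives $\frac{d}{ds}\phi_s^*\omega=d\alpha_{V_s}$, so first-order preservation is \emph{equivalent} to closedness of $\alpha_V$, and by the isomorphism \eqref{isomorphism} every $1$-form on $L$, closed or not, is $\alpha_V$ for some normal field $V$; the preservation statement in the paper concerns Hamiltonian deformations, where $\alpha_V=du$ is exact. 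Moreover the term you are trying to kill is genuinely there. Take the Clifford torus in $\C^2$ and the normal deformation $\phi_s(\theta_1,\theta_2)=(e^{\sqrt{-1}\theta_1},(1+s\sin\theta_1)e^{\sqrt{-1}\theta_2})$, so that $\alpha_V=\sin\theta_1\,d\theta_2$, $\delta\alpha_V=0$, $\Nab_VV=0$, $f=0$, $\rho=0$, $K=H$. The integrand on the right-hand side of the proposition is $0-0-\sin^2\theta_1-0+\sin^2\theta_1=0$, yet $\Vol(\phi_s)=\int(1+s\sin\theta_1)\sqrt{1+s^2\cos^2\theta_1}\,d\theta_1\,d\theta_2=4\pi^2+\pi^2s^2+O(s^4)$, so the true second variation is $2\pi^2=\int_L|d\alpha_V|^2\,d\mu$. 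Hence your route, carried out honestly, proves the stated formula \emph{plus} $\int_L|d\alpha_V|^2\,d\mu_f$, and no bookkeeping can remove that term for arbitrary normal $V$: what your argument can establish is the proposition under the additional hypothesis $d\alpha_V=0$ (Lagrangian, in particular Hamiltonian, deformations).

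For comparison, the paper never invokes Bochner--Weitzenb\"ock, so the $\delta d\alpha_V$ term never surfaces there: Lemma \ref{Ldsa} differentiates the generalized mean curvature \emph{form} in components via the evolution equation \eqref{dth} for $h^k_{ij}$, the total symmetry of $h_{ijk}$ cancels all quadratic terms in the trace, and what survives is $\nab_i(\nab_jV^j+nV^j\nab_jf)=-(d\delta_f\alpha_V)_i$ plus the curvature term. But \eqref{dth} is imported from the Lagrangian mean curvature flow literature, where the deforming family stays Lagrangian, so the restriction $d\alpha_V=0$ is latent in the paper's own proof as well --- and the example above shows it cannot be dispensed with. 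This is harmless for every use the paper makes of the proposition (Corollary \ref{hmin}, Theorem \ref{hsta}, and the $L^2$ estimates along GLMCF, where the relevant variation forms are exact or closed). So the concrete fix for your proposal is to add the hypothesis $d\alpha_V=0$; with it, your Jacobi/Bochner/Gauss route closes up and is a legitimate alternative derivation, while without it the ``collapse to the perfect square $|\delta_f\alpha_V|^2$'' is not a delicate bookkeeping step but a false statement.
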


A direct consequence is the following:
\begin{corollary}[cf. \cite{Le}]\label{le}
Suppose furthermore $\rho_f\leq 0$. Then, any $f$-minimal Lagrangian submanifold in $M$ is $f$-stable.
\end{corollary}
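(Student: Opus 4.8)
The plan is to read the conclusion off directly from the second variation formula of Proposition \ref{2ndvf}, which has been arranged so that exactly the sign-indefinite terms are exposed. First I would specialize that formula to the $f$-minimal situation. Since $f$-minimality means $K=0$ (equivalently $\alpha_K=0$), the three terms $g(B(JV,JV),K)$, $g(\Nab_VV,K)$ and $g(V,K)^2$ all vanish identically, leaving
\[
\frac{d^2}{ds^2}\Big|_{s=0}\Vol_f(\phi_s)=\int_L\{|\delta_f\alpha_V|^2-\rho_f(V,JV)\}\,d\mu_f
\]
for every normal variation $V\in\Gamma(T^\perp L)$. Because $f$-minimality is criticality under \emph{all} infinitesimal deformations, the relevant notion of $f$-stability is non-negativity of this quantity for all normal $V$, which is precisely the generality in which Proposition \ref{2ndvf} is stated.

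Next I would show that the integrand is pointwise non-negative. The first term $|\delta_f\alpha_V|^2$ is a square, hence $\geq 0$. For the second term I would unwind the hypothesis $\rho_f\leq 0$: since $\rho_f=\rho-ndd^cf$ is a $(1,1)$-form, the associated quadratic form $X\mapsto\rho_f(X,JX)$ is symmetric, and $\rho_f\leq 0$ means exactly that this form is negative semi-definite; taking $X=V$ gives $\rho_f(V,JV)\leq 0$, so $-\rho_f(V,JV)\geq 0$. Adding the two non-negative contributions and integrating against the positive weighted measure $d\mu_f$ yields $\frac{d^2}{ds^2}\big|_{s=0}\Vol_f(\phi_s)\geq 0$ for all normal deformations, which is the definition of $f$-stability.

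There is no genuine analytic obstacle here; the work is entirely bookkeeping. The two points deserving a moment's care are, first, confirming the sign convention relating the Ricci (and hence the twisted) form to its associated symmetric form, so that $\rho_f\leq 0$ really does force $\rho_f(V,JV)\leq 0$ when the tangent field $JV$ is paired with the normal field $V$; and second, making explicit that the formula of Proposition \ref{2ndvf}, valid for arbitrary normal $V$, is exactly what is needed, rather than its restriction to Hamiltonian variations. With these observations in place the conclusion is immediate.
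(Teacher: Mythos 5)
Your argument is exactly the paper's: the corollary is stated there as a direct consequence of Proposition \ref{2ndvf}, obtained by setting $K=0$ so that only $|\delta_f\alpha_V|^2-\rho_f(V,JV)$ survives, both terms being non-negative when $\rho_f\leq 0$. Your additional remarks on the sign convention for $\rho_f(V,JV)$ and on using the formula for arbitrary normal (not just Hamiltonian) variations are correct and consistent with the paper's conventions $\omega(\cdot,\cdot)=g(J\cdot,\cdot)$ and its unrestricted notion of $f$-stability.
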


Note that this corollary is in fact a special case of the result by L$\hat{\rm e}$ \cite{Le}.

In the following, we give a proof of Proposition \ref{2ndvf}.

Let $\phi:L\rightarrow M$ be a Lagrangian immersion of an $n$-dimensional manifold $L$. Consider a smooth deformation $\phi_s:L\times (-\eta, \eta) \rightarrow M$ of the Lagrangian immersion $\phi=\phi_0$. We regard $L$ as a Riemannian manifold endowed with the time dependent induced metric $g_s=\phi^*_sg$. Let $\{\p_i\}_{i=1}^n$ be a normal coordinate frame in $TL$ around a point $p\in L$ w.r.t. $g_{0}$, that is, 
\begin{align*}
(g_{0})_{ij}(p)=g_{0}(\p_i, \p_j)(p)=\delta_{ij}, \ \ \nab_{\p_i}\p_j(p)=0.   
\end{align*}
For simplicity, $(g_s)_{ij}$ may be just written as $g_{ij}$. 
Using this normal coordinate frame, we obtain a time dependent frame along $L_s=\phi_s(L)$ by
\begin{align*}
e_i:=d\phi_s(\p_i), \ \ \ 1\leq i\leq n. 
\end{align*}
Note that this time dependent frame $\{e_i\}_{i=1}^n$ is not orthonormal except at a point $x=\phi_0(p)\in L_{0}$. Since $\{e_i\}_{i=1}^n$ is a coordinate frame, they commute with the variation vector field $d\phi_s((d/ds))=V_s$, that is, 
\begin{align*}
[V, e_i]=0, \ \ 1\leq i\leq n. 
\end{align*} 
We take a frame in $T^\perp L_s$ by $\{Je_i\}_{i=1}^n$, then $\{e_1, \cdots e_n, Je_1, \cdots, Je_n\}$ is a frame in $TM$ along $L_s$ which is orthonormal only at a point $x=\phi_0(p)\in L_0$. 
Now, we are ready to compute the second variation formula. First, we just take a time derivative of the first variation formula: 
\begin{align}
\frac{d^2}{ds^2}\Vol_f(\phi_s)&=-\frac{d}{ds}\int_{L_s}g(K, V)d\mu_f=\int_{L_s}-\frac{d}{ds}g(K, V)+g(K, V)^2d\mu_f.  \label{var1}
\end{align}
We  shall compute $-(d/ds)g(K, V)$. In the frame $\{e_i, Je_i\}$, we set 
\begin{align*}
h^{k}_{ij}&:=g(JB(e_i, e_j), e_k).
\end{align*}
It is well-known that $h^{k}_{ij}$ is symmetric for all components by the integrability of $J$. Moreover, we set
\begin{align*}
H^k&:=\alpha_H(e_k)=g(JH, e_k)=g^{ij}h^k_{ij},\quad K^k:=\alpha_K(e_k),\ {\rm and}\ V^k:=\alpha_V(e_k),
\end{align*}
where $g^{ij}$ denotes the inverse matrix of $g_{ij}$ and we use the summation convention for repeated indices. Note that, for instance, we have $JH=g^{kj}H^ke_j$ in this notation.  Then the first term of the integrand in \eqref{var1} can be written as 
\begin{align}
\frac{d}{ds}g(K, V)&=\frac{d}{ds}g(JK, JV)=\frac{d}{ds}\Big(g^{ij}K^iV^j\Big)\nonumber\\
&={\Big(\frac{d}{ds}g^{ij}\Big)K^iV^j}+{\Big(\frac{d}{ds}K^i\Big)V^i}+{K^i\Big(\frac{d}{ds}V^i\Big)} \label{var2}
\end{align}
at a point $x\in L_0$. 
\begin{lemma}[Lemma 3.2 in \cite{Oh2}]
For a normal deformation $\phi_s$, we have
\begin{align*}
\frac{d}{ds}g_{ij}&=-2V^kh^k_{ij}\quad {\rm and}\quad \frac{d}{ds}g^{ij}=2g^{ip}(V^ah_{pq}^a)g^{qj}.
\end{align*}
\end{lemma}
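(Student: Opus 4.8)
The plan is to differentiate the induced metric $g_{ij}=g(e_i,e_j)$ directly, exploiting that the ambient connection $\Nab$ is compatible with $g$ and torsion-free. First I would write
\[
\frac{d}{ds}g_{ij}=\frac{d}{ds}g(e_i,e_j)=g(\Nab_V e_i,e_j)+g(e_i,\Nab_V e_j),
\]
using metric compatibility. Because $\{e_i\}$ is a coordinate frame pushed forward from the product $L\times(-\eta,\eta)$, it commutes with the variation field $V$, i.e.\ $[V,e_i]=0$; torsion-freeness of $\Nab$ then gives $\Nab_V e_i=\Nab_{e_i}V$. This reduces the derivative to $g(\Nab_{e_i}V,e_j)+g(\Nab_{e_j}V,e_i)$.

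Next, since the deformation is \emph{normal} we have $V\in\Gamma(T^\perp L)$, and I would invoke the Weingarten formula $\Nab_{e_i}V=-A_V e_i+\nap_{e_i}V$, where the shape operator satisfies $g(A_V X,Y)=g(B(X,Y),V)$ and $\nap_{e_i}V$ lies in $T^\perp L$. Pairing with the tangent vector $e_j$ annihilates the normal-connection term, leaving $g(\Nab_{e_i}V,e_j)=-g(B(e_i,e_j),V)$. By symmetry of $B$ this yields, at the point $x\in L_0$,
\[
\frac{d}{ds}g_{ij}=-2\,g(B(e_i,e_j),V).
\]

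The remaining step is to rewrite $g(B(e_i,e_j),V)$ in the notation $h^k_{ij}$, $V^k$. Using that $J$ is a $g$-isometry and that the Lagrangian condition makes $J$ exchange $TL$ and $T^\perp L$, I would write $g(B(e_i,e_j),V)=g(JB(e_i,e_j),JV)$; at the orthonormal point $x$ one has $JB(e_i,e_j)=\sum_k h^k_{ij}e_k$ and $JV=\sum_k V^k e_k$, so the pairing collapses to $V^k h^k_{ij}$, proving the first formula. The second identity then follows purely algebraically by differentiating $g^{ip}g_{pq}=\delta^i_q$, which gives $\frac{d}{ds}g^{ij}=-g^{ip}\bigl(\frac{d}{ds}g_{pq}\bigr)g^{qj}=2g^{ip}(V^a h^a_{pq})g^{qj}$.

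I do not expect a genuine obstacle here; the content is a standard first-variation-of-metric computation. The only point requiring care is the bookkeeping of where orthonormality of the frame $\{e_i,Je_i\}$ at $x$ enters: the expansions $JB(e_i,e_j)=\sum_k h^k_{ij}e_k$ and $JV=\sum_k V^k e_k$ hold only at $x\in L_0$, which is precisely why the lemma is asserted pointwise at $x$ rather than as an identity of frames along $L_s$.
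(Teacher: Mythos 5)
Your proof is correct, and it is exactly the standard first-variation-of-the-metric computation: metric compatibility plus $[V,e_i]=0$ and torsion-freeness to get $\frac{d}{ds}g_{ij}=-2g(B(e_i,e_j),V)$, the Weingarten formula to drop the normal-connection term, the Lagrangian identity $g(B(e_i,e_j),V)=g(JB(e_i,e_j),JV)=V^kh^k_{ij}$ at the orthonormal point, and differentiation of $g^{ip}g_{pq}=\delta^i_q$ for the inverse metric. The paper itself gives no proof, quoting the statement as Lemma 3.2 of Oh \cite{Oh2}, and your argument coincides with Oh's original one, including the correct observation that the index expressions are to be read at the point $x\in L_0$ where the frame $\{e_i,Je_i\}$ is orthonormal.
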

By using this lemma,  the first term in \eqref{var2} is computed by
\begin{align}
\Big(\frac{d}{ds}g^{ij}\Big)K^iV^j=2g(B(JV, JV), K). \label{first}
\end{align}
Moreover, the third term becomes
\begin{align}
{K^i\Big(\frac{d}{ds}V^i\Big)}&=K^i\Nab_Vg(JV, e_i)=K^ig(\Nab_VJV, e_i)+K^ig(JV, \Nab_Ve_i)\label{third}\\
&=g(\Nab_VV, K)-g(V, B(JK,JV))\nonumber\\
&=g(\Nab_VV, K)-g(B(JV,JV), K)\nonumber\nonumber
\end{align}
since $\Nab_Ve_i=\Nab_{e_i}V$ and the symmetry of $h_{ij}^k$ .  For the second term, we use the following lemma:

\begin{lemma} \label{Ldsa}
 Let $M$ be a K\"aler manifold, $\phi=\phi_0:L\rightarrow M$ a Lagrangian immersion and $L_s=\phi_s(L)$  a smooth deformation with $V=(d/ds)\phi_s$. Then the generalized mean curvature form $\alpha_{K_s}$ along $L_s$ satisfies 
\begin{align}
\frac{d}{ds}K^i&=\nab_i(\nab_jV^j+nV^j\nab_jf)+V^j(\oR_f)_{ji},\ {\rm or\ equivalently}  \label{dsaKF}\\
\frac{d}{ds}\alpha_K&=-d\delta_f\alpha_V+\phi^*_s(i_V{\rho}_f), \label{dsaK}
\end{align}
where $(\oR_f)_{ji}:=-{\rho}_f(Je_j, e_i)$ and $(d/ds)\alpha_K$ is a $1$-form defined by 
\begin{align*}
\Big(\frac{d}{ds}\alpha_K\Big)(X):=\frac{d}{ds}(\alpha_K(X)), \ \ X\in\Gamma(TL). 
\end{align*}
\end{lemma}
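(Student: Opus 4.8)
The plan is to prove the invariant identity \eqref{dsaK} and then recover the component identity \eqref{dsaKF} by evaluating on the frame $e_i$. The starting point is the decomposition $\alpha_K=\alpha_H-n\phi_s^*d^cf$ recorded earlier, which lets me treat the mean-curvature part and the weight correction separately; the mean-curvature part is handled by a moving-frame computation, while the weight correction is handled at the level of forms.

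I would first dispose of the weight correction, which is the easy half. Since $d^cf$ is a fixed $1$-form on $M$ and $\{\phi_s\}$ is a deformation with velocity $V$, Cartan's formula for the variation of a pulled-back form gives
\begin{align*}
\frac{d}{ds}\phi_s^*(d^cf)=\phi_s^*\big(d\,i_V d^cf+i_V dd^cf\big).
\end{align*}
Writing $V=JW$ so that $\alpha_V=-g(W,\cdot)$, the function $i_Vd^cf=d^cf(V)$ equals $-g(df,\alpha_V)$ (this is where the convention for $d^c$ fixes the sign). Hence the exact term $-n\,d(i_Vd^cf)$ contributes precisely the extra piece $n\,d(g(df,\alpha_V))$ that upgrades $\delta$ to the weighted codifferential $\delta_f$, while the remaining term $-n\,i_V dd^cf$ combines with the Ricci term below into $i_V\rho_f=i_V(\rho-ndd^cf)$.

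The technical heart is the classical ($f=0$) variation of the mean curvature form, $\frac{d}{ds}\alpha_H=-d\delta\alpha_V+\phi_s^*(i_V\rho)$, which I would obtain by differentiating $H^k=g^{ij}h^k_{ij}$ in the moving frame $\{e_i\}$. The preceding lemma supplies $\frac{d}{ds}g^{ij}$, and differentiating $h^k_{ij}=g(JB(e_i,e_j),e_k)$ brings in $\Nab_V\Nab_{e_i}e_j$. Using $[V,e_i]=0$ and $\Nab_Ve_i=\Nab_{e_i}V$, this splits into a piece that reassembles, after contraction with $g^{ij}$ and use of the full symmetry of $h^k_{ij}$, into the tangential divergence producing $-d\delta\alpha_V$, plus a genuine curvature piece $\oRm$ whose $g^{ij}$-contraction is a Ricci contraction. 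Here the Kähler identity $\Nab J=0$ together with $\rho(X,Y)=\oRic(JX,Y)$ converts that Ricci contraction into the Ricci form, yielding $\phi_s^*(i_V\rho)$, with $(\oR_f)_{ji}=-\rho_f(Je_j,e_i)$ emerging in components.

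Assembling the two halves gives $\frac{d}{ds}\alpha_K=-d\delta_f\alpha_V+\phi_s^*(i_V\rho_f)$, which is \eqref{dsaK}; evaluating on $e_i$ and using $\delta_f\alpha_V=-(\nab_jV^j+nV^j\nab_jf)$ together with $V=-V^jJe_j$ (so that $\rho_f(V,e_i)=V^j(\oR_f)_{ji}$) produces \eqref{dsaKF}. I expect the main obstacle to be the bookkeeping in the mean-curvature computation: the frame $\{e_i\}$ is orthonormal only at the base point, so one must track $\frac{d}{ds}g^{ij}$ and the non-vanishing of $\nab e$ carefully, and the delicate step is correctly producing the Ricci \emph{form} rather than merely the Ricci tensor from the curvature term, which is exactly where the Kähler condition is essential.
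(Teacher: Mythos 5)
Your proposal is correct, and its hard core---the variation of the mean curvature part---is exactly the paper's computation: the evolution \eqref{dth} of $h^k_{ij}$ in the moving frame $\{e_i\}$, contraction with $g^{jk}$ using the lemma for $\frac{d}{ds}g^{ij}$ and the full symmetry of $h^k_{ij}$, and the K\"ahler identity $\oR_{\um i\uk k}=\oR_{mi}$ (equivalently $\rho(\cdot,\cdot)=\oRic(J\cdot,\cdot)$) converting the curvature contraction into the Ricci form, which is \eqref{dsH}. Where you genuinely deviate is in the weight correction and in the order of deduction. The paper stays in components throughout: in \eqref{dsf} it differentiates $-n\nap_i f=ng(\Nab f,Je_i)$ directly, using $\Nab_Ve_i=\Nab_{e_i}V$, $\Nab J=0$ and a Hessian rearrangement to reach $-ndd^cf(V,e_i)+ne_i\{g(\nab f,JV)\}$, then assembles the component identity \eqref{dsaKF}, with the form identity \eqref{dsaK} read off as its translation. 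You instead treat the weight term invariantly via Cartan's formula for the fixed form $d^cf$, namely $\frac{d}{ds}\phi_s^*(d^cf)=\phi_s^*\big(d\,i_Vd^cf+i_V dd^cf\big)$, identify $i_Vd^cf=-g(df,\alpha_V)$ (valid because $JV$ is tangent for a normal variation, the case both proofs treat), and observe that the exact term upgrades $\delta$ to $\delta_f$ while $-n\,i_Vdd^cf$ merges with $i_V\rho$ into $i_V\rho_f$; you then get \eqref{dsaK} first and \eqref{dsaKF} by evaluation on $e_i$ using $\delta_f\alpha_V=-(\nab_jV^j+nV^j\nab_jf)$ and $V=-V^jJe_j$. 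The two treatments are equivalent---the paper's \eqref{dsf} is precisely the frame expression of your Cartan identity---but yours makes the emergence of $\delta_f$ and $\rho_f$ structurally transparent with no Hessian bookkeeping, at the cost of invoking the Lie-derivative formula for forms pulled back along a varying map, whereas the paper's direct frame computation lands immediately on the component formula in the shape it is used later in the second variation and flow computations.
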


\begin{proof}
By a similar computation to mean curvature flow (see \cite{SuY} and \cite{Wan}), we have 
\begin{align}
\frac{d}{ds}h_{ij}^k&=\nabla_{i}\nabla_{j}V^k-V^mh^m_{jl}h^k_{li}-V^mh^m_{kl}h^l_{ij}+V^m\oR_{\underline{m}i\underline{k}j}, \label{dth}
\end{align}
where $\nab_i\nab_jV^k=g(\nab_{e_i}\nab_{e_j}(JV), e_k)$, and $\oR_{\um i\uk j}=\oRm(Je_m, e_i, Je_k, e_j)$.  By the symmetry of $h^k_{ij}$, taking a trace with $g^{jk}$ in the above equation gives 
\begin{align}
\frac{d}{ds}H^i&=\frac{d}{ds}(g^{jk}h^k_{ij})\label{dsH}\\
&=\Big(\frac{d}{ds}g^{jk}\Big)h^k_{ij}+g^{jk}\Big(\frac{d}{ds}h^k_{ij}\Big) \nonumber\\
&=2V^mh^m_{jk}h^k_{ij}+\nab_i\nab_jV^j-V^mh^m_{jl}h^j_{li}-V^mh^m_{jl}h^l_{ij}+V^m\oR_{\underline{m}i\underline{k}k}\nonumber\\
&=\nab_i\nab_jV^j+V^m\oR_{mi}.\nonumber
\end{align}
In the forth equality, we have used $\oR_{\underline{m}i\underline{k}k}=\oR_{mi}$ which holds on a Lagrangian submanifold in a K\"ahler manifold. On the other hand, we have  
\begin{align}
-n\frac{d}{ds}\nap_if&=n\frac{d}{ds}g(\Nab f, Je_i)\label{dsf}\\
&=ng(\Nab_V\Nab f, Je_i)+ng(\Nab f, \Nab_V(Je_i))\nonumber\\
&=n\oHess_f(V, Je_i)+ng(\Nab f, \Nab_{e_i}(JV)) \nonumber\\
&=n\oHess_f(V, Je_i)+ne_i\{g(\nab f, JV)\}-n\oHess_f(JV, e_i)\nonumber\\
&=-ndd^cf(V,e_i)+ne_i\{g(\nab f, JV)\}.\nonumber
\end{align}
Thus, by \eqref{dsH} and  \eqref{dsf} we obtain 
\begin{align}
\frac{d}{ds}K^i&=\nab_i(\nab_jV^j+nV^j\nab_jf)+V^j({\oR}_f)_{ji} 
\end{align}
which proves  \eqref{dsaKF}. 
\end{proof}

By \eqref{dsaKF}, we see
\begin{align}
\Big{(}\frac{d}{ds}K^i\Big{)}V^i=-g(d\delta_f\alpha_V, \alpha_V)+{\rho}_f(V,JV). \label{second}
\end{align}

Substituting \eqref{first}, \eqref{third} and \eqref{second} to \eqref{var1}, we prove Proposition \ref{2ndvf}.

If we restrict our attention to Hamiltonian deformations, we obtain the following formula which generalizes the result in \cite{Oh2}:

\begin{corollary}\label{hmin}
Let $(M, \omega, J)$ be a K\"ahler manifold.  Suppose $\phi$ is Hamiltonian $f$-minimal and $\phi_s$ is a Hamiltonian deformation of $\phi$ so that $\alpha_{V}=du$ (i.e., $JV=\nabla u$) for some $u\in C^{\infty}(L)$.  Then, we have
 \begin{align*}
\frac{d^2}{ds^2}\Big|_{s=0}&\Vol_f(\phi_s)=\int_L\{|\Delta_fu|^2-\rho_f(\nabla u,J\nabla u)-2g(B(\nabla u, \nabla u), K)+g(J\nabla u,K)^2\}d\mu_f. 
\end{align*}
\end{corollary}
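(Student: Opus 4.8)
The plan is to specialize the general second variation formula of Proposition \ref{2ndvf} to the Hamiltonian setting. Under the hypothesis $\alpha_V = du$ we have $JV = \nabla u$, hence $V = -J\nabla u$, and I would substitute this into the five terms of the integrand
\[
|\delta_f\alpha_V|^2-\rho_f(V,JV)-g(B(JV, JV), K)-g(\Nab_VV,K) +g(V,K)^2 ,
\]
tracking each term separately and using that $\phi$ is Hamiltonian $f$-minimal, i.e. $\delta_f\alpha_K=0$ by Proposition \ref{EL}.

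Three of the terms transform by a purely algebraic substitution. Since $\alpha_V=du$, the definition of the weighted Laplacian gives $\delta_f\alpha_V=\delta_f du=-\Delta_f u$, so $|\delta_f\alpha_V|^2=|\Delta_f u|^2$. For the curvature term, writing $V=-J\nabla u$ and using the skew-symmetry of the $2$-form $\rho_f$ yields $\rho_f(V,JV)=\rho_f(-J\nabla u,\nabla u)=\rho_f(\nabla u,J\nabla u)$, so $-\rho_f(V,JV)=-\rho_f(\nabla u,J\nabla u)$. Finally $g(V,K)^2=g(-J\nabla u,K)^2=g(J\nabla u,K)^2$. Moreover $JV=\nabla u$ turns $-g(B(JV,JV),K)$ into $-g(B(\nabla u,\nabla u),K)$, which supplies one of the two copies needed in the target formula.

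The crux is to show that the acceleration term contributes the second copy, that is,
\[
\int_L g(\Nab_VV,K)\,d\mu_f=\int_L g(B(\nabla u,\nabla u),K)\,d\mu_f .
\]
It is essential that this equality holds only after integration, and this is exactly where Hamiltonian $f$-minimality enters. The key observation is that for a Hamiltonian deformation the functions $V^i=\alpha_V(e_i)=\omega(V,e_i)$ satisfy $V^i=\partial_i u_s$ in the \emph{fixed}, $s$-independent coordinate frame $\{\partial_i\}$ on $L$, where $\alpha_{V_s}=du_s$. Differentiation in $s$ then commutes with $\partial_i$, giving $(d/ds)V^i=\partial_i\dot u$ with $\dot u:=(d/ds)|_{s=0}u_s$. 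Computing at each point in the normal frame, where $g^{ij}=\delta^{ij}$, one gets $K^i(d/ds)V^i=g(\alpha_K,d\dot u)$, so that the weighted divergence theorem and Proposition \ref{EL} yield
\[
\int_L K^i\Big(\frac{d}{ds}V^i\Big)\,d\mu_f=\int_L g(\alpha_K,d\dot u)\,d\mu_f=\int_L(\delta_f\alpha_K)\,\dot u\,d\mu_f=0 .
\]
Comparing this with the pointwise identity $K^i(d/ds)V^i=g(\Nab_VV,K)-g(B(JV,JV),K)$ from \eqref{third} immediately gives the displayed equality.

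Putting the pieces together, the two terms $-g(B(JV,JV),K)-g(\Nab_VV,K)$ of Proposition \ref{2ndvf} integrate to $-2\int_L g(B(\nabla u,\nabla u),K)\,d\mu_f$, and combined with the three directly transformed terms this produces the asserted formula. I expect the main obstacle to be the bookkeeping around the acceleration term: one must resist identifying $g(\Nab_VV,K)$ with $g(B(\nabla u,\nabla u),K)$ pointwise and instead integrate the discrepancy against $\delta_f\alpha_K=0$. A secondary care point is justifying that $d/ds$ genuinely commutes with the frame derivative, which forces the use of the fixed frame $\{\partial_i\}$ rather than the $s$-dependent frame $\{e_i\}=\{d\phi_s(\partial_i)\}$.
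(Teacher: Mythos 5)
Your proposal is correct and follows essentially the same route as the paper: both specialize Proposition \ref{2ndvf}, handle the terms $|\delta_f\alpha_V|^2$, $-\rho_f(V,JV)$, $-g(B(JV,JV),K)$ and $g(V,K)^2$ by direct substitution, and then use the identity $K^i(d/ds)V^i=g(\alpha_K,d\dot u)$ together with $\delta_f\alpha_K=0$ (Proposition \ref{EL}) and equation \eqref{third} to convert $\int_L g(\Nab_VV,K)\,d\mu_f$ into $\int_L g(B(\nabla u,\nabla u),K)\,d\mu_f$, producing the factor of $2$. Your extra care about working in the fixed frame $\{\partial_i\}$ so that $d/ds$ commutes with differentiation is exactly the (implicit) justification behind the paper's step $\frac{d}{ds}V^i=\nabla_i\frac{du_s}{ds}$.
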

\begin{proof}
Since $V^i_s=\nabla_iu_s$, setting $w:=(du_s/ds)|_{s=0}$, the integrant of the third term in \eqref{var2} is computed by
\begin{align*}
\int_L K^i\Big(\frac{d}{ds}V^i\Big)d{\mu}_f=\int_L K^i\Big(\nabla_i\frac{du_s}{ds}\Big)d{\mu}_f=\int_L g(\alpha_K, dw)d\mu_f=\int_L \delta_f\alpha_K wd\mu_f=0.
\end{align*}
Here, we used $\delta_f\alpha_K=0$ by the assumption and Proposition \ref{EL}. Equivalently, we obtain
\begin{align*}
\int_Lg(\Nab_VV, K)d\mu_f=\int_Lg(B(JV,JV), K)d\mu_f
\end{align*} 
by \eqref{third}. Combining this with Proposition \ref{2ndvf},  we prove the formula.
\end{proof}

\subsection{Weight function}
Now, we suppose furthermore the Ricci form of $(M, \omega, J)$ satisfies 
\begin{align}\label{almE}
\rho=C\omega+ndd^cf,\quad {\rm or\ equivalently}\quad \rho_f=C\omega
\end{align}
for some constant $C\in \R$ and $f\in C^\infty(M)$, where $dd^c=2\sqrt{-1}\p\bar\p$ and $n=\dim_{\C}M$. We call the function $f$ a {\it weight function} for $\omega$.

 \begin{remark}\label{BE}
The condition \eqref{almE} has been considered in \cite{Beh} and \cite{SmW}, respectively. A K\"ahler manifold satisfying \eqref{almE} is called {\it almost Einstein} in \cite{Beh}, and a metric and complex connection (with torsion) on an almost K\"ahler manifold so called the {\it Einstein connection} is introduced as a generalization of \eqref{almE} in \cite{SmW}. If $C>0$, then \eqref{almE} implies $\omega\in C'c_1(M)$ for some positive $C'$ since the Ricci form $\rho$ of a K\"ahler manifold represents $2\pi c_1(M)$, where $c_1(M)$ is the first Chern class of $(M,J)$. Such a symplectic manifold is  called {\it monotone} in symplectic geometry (cf. \cite{AM}). 

 On the other hand, in the Bakry-\'Emery theory, the {\it Bakry-\'Emery Ricci tensor} on a weighted metric measure space $(M,g, e^{2nf}d\mu)$ is defined by 
 \begin{align}\label{BEric}
\overline{\rm Ric}_f:=\overline{\rm Ric}-2n\overline{\rm Hess}_f
 \end{align} 
 for some $f\in C^{\infty}(M)$ .  We remark that $\rho_f:=\rho-ndd^cf$ does not coincide with $\overline{\rm Ric}_f(J,\cdot, \cdot)$ in general. However, if $M$ satisfies an additional assumption, then we have $\rho_f=\overline{\rm Ric}_f(J\cdot, \cdot)$. See  below.
 \end{remark}

A typical example  satisfying \eqref{almE} is given by Fano manifold, i.e., a  compact complex manifold $(M,J)$ of positive first Chern class $c_1(M)$.  In fact, we can take a K\"ahler form so that  $\omega'\in 2\pi c_1(M)$, then there exists a real function $f$ satisfying $\rho=\omega'+ndd^cf$ since $\rho$ represents $2\pi c_1(M)$. By rescaling $\omega'$ if necessary, we obtain the desired $\omega$ for some positive $C$. Note that, for fixed $J$ and the K\"ahler class $[\rho]=2\pi c_1(M)$, the compatible symplectic forms in $[\rho]$ consist of an open and convex set (cf. \cite{Abreu}). 

An interesting case is when $(M, \omega, J)$ is the {\it K\"ahler-Ricci soliton} ({\it KRS} for short), i.e., there exists a non-trivial holomorphic vector field $X$ so that 
\begin{align}\label{KRS}
\rho=C\omega+\mathcal{L}_X\omega.
\end{align}
Note that a compact KRS exits only when $M$ is a Fano manifold. Moreover, if $M$ admits a KRS for non-trivial $X$, then there are no K\"ahler-Einstein metric on $M$ (See \cite{TiZhu}). The weight function on the KRS is given as follows (See \cite{BG2} for details): Since $X$ is holomorphic and $M$ is simply-connected,  there exists a real valued function $f\in C^{\infty}(M)$ so that  $X=\Nab f-\sqrt{-1}J\Nab f$, and we easily see $\mathcal{L}_X\omega=ndd^cf$, i.e., $f$ is a weight function. In this case, we have $\rho_f=\overline{\rm Ric}_f(J\cdot, \cdot)$. In fact,  since $X$ is holomorphic, we have $\mathcal{L}_{\Nab f}J=0$, and hence, $\overline{\rm Hess}_f(V,W)=\overline{\rm Hess}_f(JV,JW)$ for any real vector field $V,W\in\Gamma(TM)$. Therefore, we see
\begin{align}\label{ddc}
dd^cf(V, W)=-\overline{\rm Hess}_f(V,JW)+\overline{\rm Hess}_f(JV,W)=2\overline{\rm Hess}_f(JV,W).
\end{align}
Thus, $\rho_f=\overline{\rm Ric}_f(J\cdot, \cdot)$ when $M$ is a K\"ahler-Ricci soliton. 

\begin{example}\label{toric}
A complex $n$-dimensional compact K\"ahler manifold $(M,\omega, J)$ is called {\it toric} if a real $n$-dimensional torus $T^n$ acts on $M$ holomorphically and in a Hamiltonian way with moment map $\mu: M\rightarrow \mathbb{R}^n$. The well-known Delzant construction provides a canonical way to construct a toric K\"ahler manifold (see \cite{Abreu0} and references therein).  When $(M,\omega, J)$ is a toric Fano manifold, by the result of Wang-Zhu \cite{WZ},  there exists a unique  K\"ahler-Ricci soliton up to holomorphic automorphism on $M$, and the soliton is K\"ahler-Einstein if and only if the Futaki invariant vanishes.  In particular, we have several examples of toric Fano manifolds which do not admit any K\"ahler-Einstein metric (see \cite{WZ}).

When $(M,\omega, J)$ is a compact toric K\"ahler manifold,  any regular $T^n$-orbit is Lagrangian and just a level set $\mu^{-1}(c)$ for some $c\in \mathbb{R}^n$. Take any $T^n$-invariant function $f$. For example, we can take $f$ as the weight function for $\omega$ when $M$ is Fano.  Then, we see $T^n\subset {\rm Isom}(M, e^{2f}g)$ since $T^n\subset {\rm Aut}(M, \omega, J, g)$, and hence, there exists an orbit $\mathcal{O}$ with the maximum volume w.r.t. the metric $g_f=e^{2f}g$ among the regular $T^n$-orbits, and $\mathcal{O}$ must be minimal w.r.t. $g_f$, i.e., $f$-minimal (see Proposition 1 and Corollary 3 in \cite{Pac}). Because any regular $T^n$-orbit is Lagrangian, $\mathcal{O}$ is actually a  $f$-minimal Lagrangian submanifold in $M$. Note that any regular $T^n$-orbit is Hamiltonian $f$-minimal by Proposition \ref{homoge}.
 \end{example}

An {\it almost Calabi-Yau} manifold gives another examples of K\"ahler manifold satisfying \eqref{almE}. A K\"ahler manifold $(M, \omega, J)$ is called  almost Calabi-Yau  if there exists a non-vanishing holomorphic volume form $\Omega$ in the sense of Joyce \cite{Joyce}. In this case, we define a weight function $f$ by
\begin{align}\label{holcf}
e^{2nf}\frac{\omega^n}{n!}=(-1)^{\frac{n(n-1)}{2}}\Big(\frac{\sqrt{-1}}{2}\Big)^n\Omega\wedge\overline{\Omega}.
\end{align}
Then, we have $\rho=ndd^cf$, i.e., $C=0$.  The $f$-minimal Lagrangian submanifold $L$ is a calibrated submanifold for the g.c.K. metric $g_f=e^{2f}g$, i.e., $L$ is volume-minimizing w.r.t. $g_f$ in its homology class (see \cite{Beh}). In particular, any $f$-minimal Lagrangian submanifold  is $f$-stable (See also Corollary \ref{le}). We refer to \cite{Yam} for some examples of Hamiltonian $f$-minimal Lagrangian submanifold in an almost Calabi-Yau manifold.

 When $C>0$, or equivalently, $\rho_f>0$, the (Hamiltonian) $f$-stability of Lagrangian submanifold is a non-trivial proprerty.  If $M$ satisfies \eqref{almE} and $\phi: L\rightarrow M$ is $f$-minimal  (i.e., $K=H_f=0$), we have a simple criterion of the Hamiltonian $f$-stability for the potential function $f$.  The following is a generalization of the result in \cite{CLN} and \cite{Oh1}, and the proof is immediate by Proposition \ref{hmin}.

\begin{theorem}\label{hsta}
Suppose $M$ satisfies $\rho=C\omega+ndd^cf$ for some $f\in C^{\infty}(M)$,  $L$ is compact and $\phi: L\rightarrow M$ is $f$-minimal.  
Then, $\phi$ is Hamiltonian $f$-stable if and only if the first nonzero eigenvalue $\lambda_1(\Delta_f)$ of  the weighted Laplacian $\Delta_f$ satisfies 
\begin{align}
\lambda_1(\Delta_f)\geq C. \label{hstab2}
\end{align}
\end{theorem}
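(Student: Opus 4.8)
The plan is to read the result directly off the Hamiltonian second variation formula of Corollary~\ref{hmin}. First I would impose the $f$-minimality hypothesis $K=0$, which annihilates the two terms $-2g(B(\nabla u,\nabla u),K)$ and $g(J\nabla u,K)^2$ in that formula, leaving, for the Hamiltonian deformation with potential $u$ (so that $JV=\nabla u$),
\begin{align*}
\frac{d^2}{ds^2}\Big|_{s=0}\Vol_f(\phi_s)=\int_L\{|\Delta_f u|^2-\rho_f(\nabla u, J\nabla u)\}\,d\mu_f.
\end{align*}
Next I would invoke the weight hypothesis \eqref{almE} in the equivalent form $\rho_f=C\omega$. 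Since $\omega(X,Y)=g(JX,Y)$ and $J$ is an isometry, $\rho_f(\nabla u, J\nabla u)=C\,g(J\nabla u, J\nabla u)=C|\nabla u|^2$, so the quadratic form governing stability becomes
\begin{align*}
Q(u):=\int_L\big(|\Delta_f u|^2-C|\nabla u|^2\big)\,d\mu_f.
\end{align*}

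The second step is a spectral decomposition of $Q$. On the compact manifold $L$ the weighted Laplacian $\Delta_f$ is elliptic and, by the integration-by-parts identity \eqref{stokes}, self-adjoint with respect to the $L^2(d\mu_f)$ inner product; hence it has discrete spectrum and a complete $L^2(d\mu_f)$-orthonormal eigenbasis $\{\varphi_k\}_{k\ge 0}$ with $\Delta_f\varphi_k=-\lambda_k\varphi_k$ and $0=\lambda_0<\lambda_1\le\lambda_2\le\cdots$. Because $Q$ depends on $u$ only through $\nabla u$ and $\Delta_f u$, it is invariant under $u\mapsto u+\mathrm{const}$, so I may assume $\int_L u\,d\mu_f=0$ and write $u=\sum_{k\ge 1}a_k\varphi_k$. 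Then $\int_L|\Delta_f u|^2\,d\mu_f=\sum_k\lambda_k^2a_k^2$, while \eqref{stokes} gives $\int_L|\nabla u|^2\,d\mu_f=-\int_L(\Delta_f u)u\,d\mu_f=\sum_k\lambda_k a_k^2$, so that
\begin{align*}
Q(u)=\sum_{k\ge 1}\lambda_k(\lambda_k-C)\,a_k^2.
\end{align*}

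Finally I would extract the equivalence. Every Hamiltonian deformation of $\phi$ arises from a potential $u\in C^\infty(L)$ and conversely, so $\phi$ is Hamiltonian $f$-stable precisely when $Q(u)\ge 0$ for all $u$. If $\lambda_1\ge C$, then $\lambda_k\ge\lambda_1\ge C$ for every $k\ge 1$, each summand $\lambda_k(\lambda_k-C)a_k^2$ is nonnegative, and $Q(u)\ge 0$; conversely, if $\lambda_1<C$, testing with $u=\varphi_1$ yields $Q(\varphi_1)=\lambda_1(\lambda_1-C)<0$, which breaks stability. This establishes the stated criterion. I do not expect a genuine obstacle, since the geometric content is entirely contained in Corollary~\ref{hmin}; the only points needing the usual (standard) care are the spectral theory of $\Delta_f$ on the compact $L$ and the harmless normalization to mean-zero potentials.
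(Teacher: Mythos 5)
Your proposal is correct and follows exactly the route the paper intends: the paper declares the theorem ``immediate by'' Corollary~\ref{hmin}, i.e.\ one sets $K=0$, uses $\rho_f=C\omega$ to reduce the second variation to $\int_L(|\Delta_f u|^2-C|\nabla u|^2)\,d\mu_f$, and then applies the standard spectral decomposition of the self-adjoint operator $\Delta_f$ on $L^2(d\mu_f)$, which is precisely what you spell out. Your write-up simply makes explicit the spectral details (eigenbasis expansion, testing with $\varphi_1$) that the paper leaves to the reader, citing \cite{CLN} and \cite{Oh1}.
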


\subsection{The generalized mean curvature form}
In this subsection, we adopt the generalized mean curvature form to the corresponding notions described in \cite{LP} and \cite{SmW}. Moreover, we shall show that the generalized mean curvature form is related to a Hamiltonian invariant of Lagrangian submanifold when $M$ satisfies \eqref{almE}.

First, we recall the definition of {\it Maslov form} and {\it Maslov index} according to \cite{CG},  \cite{LP} and \cite{Oh4}.  Let $(M, \omega, J)$ be an almost K\"ahler manifold and $\phi: L\rightarrow M$  a Lagrangian immersion. Since $\phi$ is Lagrangian, the volume element $d\mu(p)$ determines  a unit length element $\Omega_L(p) \in K(M):=\Lambda^{(n,0)}T^*M$  for each point $p\in L$ which is unique up to sign.  Then,  the square of the element defines a unit length section over $L$:
\begin{align*}
\Omega_L^2: L\rightarrow K^2(M):=K(M)\otimes K(M).
\end{align*}
Note that, if $L$ is orientable, then the volume form of $L$ defines a unit length section $\Omega_L$ of $\phi^*K(M)$ and $\Omega_L^2=\Omega_L\otimes \Omega_L$.

Let $\Sigma$ be a compact oriented surface with boundary $\partial\Sigma$ and $w: \Sigma\rightarrow M$ a smooth map with $w(\partial\Sigma)\subset L$. For simplicity, we assume $\partial\Sigma$ is connected, that is $\partial \Sigma\simeq S^1$, and induce the orientation. Then, $w^*K(M)$ is a trivial bundle, and we have a unit length section $\Omega_w$ of $w^*K(M)$. Since $w(\partial\Sigma)\subset L$, there exists a unique function $e^{\sqrt{-1}\eta}: \partial\Sigma\rightarrow S^1$ so that 
\begin{align*}
\Omega_L^2=e^{\sqrt{-1}\eta}\Omega_w^2
\end{align*}
over $\partial\Sigma$. Now, we define the {\it Maslov index} by minus its winding number
\begin{align*}
\mu_L(w):=\frac{-1}{2\pi}\int_{\partial \Sigma} d\eta.
\end{align*}
This definition recovers the classical definition of Maslov index (See \cite{CG}).

In order to compute the Maslov index, we use a unitary connection $\widetilde{\nab}$ on $TM$. We consider the induced connection on $\phi^*K(M)$. In local, $\Omega_L$ defines a local section of $\phi^*K(M)$. We define a local 1-form $\tilde{\xi}^{\wnab}$ by the connection 1-form in the trivialization $\Omega_L$:
\begin{align*}
\wnab\Omega_L=\tilde{\xi}^{\wnab}\otimes\Omega_L.
\end{align*}
Since $\widetilde{\nab}$ is unitary, one easily verifies that $\tilde{\xi}^{\widetilde{\nab} }$ takes values in ${\rm Im}{\mathbb{C}}$ (see \cite{LP}).  Thus, we define a real 1-form $\xi^{\widetilde{\nab} }\in \Omega^1(L)$ by $\tilde\xi^{\widetilde{\nab}}=\sqrt{-1}\xi^{\widetilde{\nab} }$. 
According to \cite{LP}, we call ${\xi}^{\widetilde{\nab} }$ {\it Maslov form} for the unitary connection ${\widetilde{\nab}}$. Also, we obtain a global 1-form $\tilde{\zeta}^{\wnab}$ on $L$ defined by 
\begin{equation*}
\widetilde{\nab} \Omega_L^2=\tilde{\zeta}^{\widetilde{\nab}}\otimes \Omega_L^2.
\end{equation*}
 By definition, we have $\tilde \zeta^{\widetilde{\nab}}=2\tilde{\xi}^{\wnab}$. 

The following description gives another definition of the Maslov form: Define a tensor field on $L$ by
\begin{align}
\widetilde{S}(X,Y,Z):&=g(J(\wnab_XY)^{\perp}, Z)\label{wS}
\end{align}
for $X,Y,Z\in \Gamma(TL)$. Since $\wnab$ is a unitary connection, one can easily check that $\widetilde{S}$ is symmetric for the last two components. Then, the Maslov 1-form $\xi^{\wnab}$ satisfies (see Section 4.2 in \cite{LP})
\begin{align}\label{Maslov}
\xi^{\wnab}=\sum_{i=1}^n\widetilde{S}(\cdot, e_i, e_i),
\end{align}
where $\{e_i\}_{i=1}^n$ is an orthonormal basis of $L$.

Denote the curvature tensor of $\wnab$ by $\widetilde{R}$, and define a $2$-form on $M$ by
\begin{align*}
\widetilde{P}(X,Y):=\frac{1}{2}{\rm tr}\{J\widetilde{R}(X,Y)\}=\frac{1}{2}\sum_{i=1}^{2n}g(J\widetilde{R}(X,Y)\overline{e}_i, \overline{e}_i),
\end{align*}
where $\{\overline{e}_i\}_{i=1}^{2n}$ is an orthonormal basis of $M$.
As mentioned in Section 4 in \cite{LP}, $\widetilde{P}$ is a closed form and represents $2\pi c_1(M)$, where $c_1(M)$ is the first Chern class of $(M, J)$. Note that, for a general almost K\"ahler manifold,  $\widetilde{P}$ does {\it not} coincide with the usual Ricci form  defined by
\begin{align*}
\tilde{\rho}(X,Y):=\sum_{i=1}^{2n}g(\widetilde{R}(JX,\overline{e}_i)\overline{e}_i, Y).
\end{align*}
However, we shall call the 2-form $\widetilde{P}$ \textit{Ricci form} for the unitary connection $\wnab$. In fact, when $M$ is K\"ahler and $\wnab$ coincides with the Levi-Civita connection $\Nab$, then $\widetilde{P}=\tilde{\rho}$. The following formula is a generalization of so called  Dazord's formula:
\begin{lemma}[Proposition 4.3 in \cite{LP}]\label{Dazord}
The Maslov form $\xi^{\wnab}$ satisfies $d\xi^{\wnab}=\phi^*\widetilde{P}$. 
\end{lemma}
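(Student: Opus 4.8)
The plan is to recognize the Maslov form, up to the factor $\sqrt{-1}$, as a local connection $1$-form of the connection induced by $\wnab$ on the canonical bundle, and then to use that on a complex \emph{line} bundle the exterior derivative of a local connection $1$-form equals the curvature $2$-form. Concretely, $\wnab$ induces a connection on $\det T^{1,0}M$ and hence on its dual $K(M)$; pulling this back by $\phi$ and writing $\phi^*\wnab\,\Omega_L=\tilde\xi^{\wnab}\otimes\Omega_L$ in the unit trivialization $\Omega_L$, the line-bundle identity $\nab^2 s=(d\theta)\otimes s$ (valid for a rank-one connection form $\theta$, since $\theta\wedge\theta=0$) yields
\begin{align*}
d\tilde\xi^{\wnab}=\phi^*\Theta,
\end{align*}
where $\Theta\in\Omega^2(M;\sqrt{-1}\R)$ is the curvature of the induced connection on $K(M)$. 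Here $\Omega_L$ is only defined up to sign, but this sign cancels in the defining relation, so $\tilde\xi^{\wnab}$ and the identity above are unambiguous; moreover the computation is local on $L$, hence valid everywhere.

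The remaining task is to identify $\Theta$ with $\sqrt{-1}\,\widetilde{P}$. Since the curvature of the determinant line bundle of a Hermitian bundle is the complex trace of the curvature, the curvature of $K(M)=(\det T^{1,0}M)^*$ is $-{\rm tr}_{\C}(\widetilde{R}|_{T^{1,0}M})$. Because $\wnab$ is complex, $\widetilde{R}(X,Y)$ commutes with $J$ and thus preserves $T^{1,0}M$, and because $\wnab$ is metric, ${\rm tr}_{\R}\widetilde{R}(X,Y)=0$. Writing the projection $\pi^{1,0}=\tfrac12(\id-\sqrt{-1}J)$ onto $T^{1,0}M$ and recalling that the trace of a real endomorphism over $TM\otimes\C$ agrees with its real trace over $TM$, I would compute
\begin{align*}
{\rm tr}_{\C}\big(\widetilde{R}(X,Y)|_{T^{1,0}M}\big)={\rm tr}_{\C}\big(\widetilde{R}(X,Y)\,\pi^{1,0}\big)=-\frac{\sqrt{-1}}{2}\,{\rm tr}_{\R}\big(J\widetilde{R}(X,Y)\big)=-\sqrt{-1}\,\widetilde{P}(X,Y),
\end{align*}
the last equality being the very definition of $\widetilde{P}$. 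Hence $\Theta=\sqrt{-1}\,\widetilde{P}$, and combining this with the first display together with $\tilde\xi^{\wnab}=\sqrt{-1}\,\xi^{\wnab}$ gives $d\xi^{\wnab}=\phi^*\widetilde{P}$, as claimed.

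I expect the only real obstacle to be the consistent bookkeeping of signs and factors of $\sqrt{-1}$ across the three conventions that enter: the curvature sign in $\nab^2 s=(d\theta)\otimes s$, the dualization in $K(M)=(\det T^{1,0}M)^*$, and the real-versus-complex trace identity used in the last display. None of these is conceptually deep, but a sign error in any one would flip the final identity, so I would verify the trace computation on a flat model (e.g.\ $\C^n$ with its standard structure) to pin down the overall normalization. An alternative that sidesteps the sign ambiguity of $\Omega_L$ entirely is to run the same argument on the \emph{global} section $\Omega_L^2$ of $K^2(M)$, whose induced curvature is $2\Theta$; since $\tilde\zeta^{\wnab}=2\tilde\xi^{\wnab}$, this reproduces $d\xi^{\wnab}=\phi^*\widetilde{P}$ without change.
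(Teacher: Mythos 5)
Your proof is correct, but there is nothing in the paper to compare it against: the paper does not prove this lemma at all, it imports it wholesale as Proposition 4.3 of \cite{LP}. What you have produced is therefore a self-contained substitute for the citation, and it is essentially the line-bundle curvature argument that underlies the cited result. Both pillars of your argument check out with the paper's conventions. First, since $\wnab$ is unitary it preserves $J$ and $g$, hence induces a connection on $T^{1,0}M$ and so on $K(M)=\Lambda^{(n,0)}T^*M=(\det T^{1,0}M)^*$; in the trivialization $\Omega_L$ of $\phi^*K(M)$ (unambiguous despite the sign ambiguity of $\Omega_L$, as you note) the connection form is $\tilde\xi^{\wnab}$, and with the paper's curvature convention $\widetilde{R}(X,Y)=[\wnab_X,\wnab_Y]-\wnab_{[X,Y]}$ the curvature $2$-form of a line bundle in any trivialization is indeed $d$ of the connection form, with no stray sign. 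Second, your trace identity is right: for a real endomorphism $A$ of $TM$ commuting with $J$ one has
\begin{align*}
\mathrm{tr}_{\C}\big(A|_{T^{1,0}M}\big)=\tfrac12\,\mathrm{tr}_{\R}(A)-\tfrac{\sqrt{-1}}{2}\,\mathrm{tr}_{\R}(JA),
\end{align*}
and skew-symmetry of $\widetilde{R}(X,Y)$ (metricity of $\wnab$) kills the first term, giving $\mathrm{tr}_{\C}(\widetilde{R}(X,Y)|_{T^{1,0}M})=-\sqrt{-1}\,\widetilde{P}(X,Y)$; dualizing flips the sign, so the curvature of $K(M)$ is $\sqrt{-1}\,\widetilde{P}$, and the factor $\sqrt{-1}$ cancels against $\tilde\xi^{\wnab}=\sqrt{-1}\,\xi^{\wnab}$. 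Two remarks. (i) In place of the flat-model check you propose for pinning down the normalization, there is a cleaner cross-check already in the paper: with these conventions Chern--Weil gives $c_1(M)=\frac{\sqrt{-1}}{2\pi}\big[\mathrm{tr}_{\C}\widetilde{R}|_{T^{1,0}M}\big]=\frac{1}{2\pi}[\widetilde{P}]$, which is exactly the paper's assertion that $\widetilde{P}$ represents $2\pi c_1(M)$; an overall sign error in your trace identity would contradict it. (ii) Your argument nowhere uses integrability of $J$ --- only that $\wnab$ preserves $J$ and $g$, so that $T^{1,0}M\subset TM\otimes\C$ is $\wnab$-parallel --- hence it is valid in the almost K\"ahler generality in which the lemma is stated; likewise your fallback of working with the global section $\Omega_L^2$ of $K^2(M)$, with $\tilde\zeta^{\wnab}=2\tilde\xi^{\wnab}$ and curvature doubled, is consonant with the paper's setup, where $\Omega_L^2$ is the globally defined object.
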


The following integral formula is a generalization of the result of \cite{CG} and \cite{Ono}.
\begin{proposition}\label{intg}
Let $M$ be an almost K\"ahler manifold and $\phi: L\rightarrow M$ be a Lagrangian immersion. For any unitary connection $\widetilde{\nabla}$, we have
\begin{align*}
\mu_L([w])=\frac{1}{\pi}\int_{\Sigma} w^*\widetilde{P}-\frac{1}{\pi}\int_{\partial\Sigma}(\partial w)^*\xi^{\widetilde{\nab}}.
\end{align*}
\end{proposition}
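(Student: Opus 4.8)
The plan is to prove the formula by a single application of Stokes' theorem on $\Sigma$, after expressing the relevant connection $1$-forms on the line bundle $K^2(M)$ in two different trivializations and comparing them. The entire argument reduces to relating the globally defined unit section $\Omega_w^2$ of $w^*K^2(M)$ over $\Sigma$ with the section $\Omega_L^2$, which is only defined over $\partial\Sigma$, and the weight of the proof lies in keeping track of universal constants rather than in any genuine analysis.

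First I would fix the interior trivialization. Since $w^*K(M)$ is trivial, I take the unit section $\Omega_w$ and set $\Omega_w^2=\Omega_w\otimes\Omega_w$. Writing $w^*\wnab\Omega_w^2=\sqrt{-1}\zeta_w\otimes\Omega_w^2$ defines a real $1$-form $\zeta_w$ on $\Sigma$; it is real because $\wnab$ is unitary and $|\Omega_w^2|=1$. The curvature of the induced connection on the line bundle $K^2(M)$, read off in this trivialization, is $\sqrt{-1}\,d\zeta_w$, and comparing it with the Chern--Weil description of $\widetilde{P}$ as ($\sqrt{-1}$ times, and doubled on $K^2(M)$) the curvature of the canonical bundle gives the interior identity $d\zeta_w=2\,w^*\widetilde{P}$ on $\Sigma$.

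Next I would compare the two sections over the boundary. On $\partial\Sigma$ we have $\Omega_L^2=e^{\sqrt{-1}\eta}\Omega_w^2$, and differentiating this relation with $w^*\wnab$, while using the definitions $\wnab\Omega_L^2=\tilde\zeta^{\wnab}\otimes\Omega_L^2$ and $\tilde\zeta^{\wnab}=2\tilde\xi^{\wnab}=2\sqrt{-1}\xi^{\wnab}$, yields after cancelling the factor $\sqrt{-1}\,\Omega_L^2$ the boundary identity $d\eta=2(\partial w)^*\xi^{\wnab}-\zeta_w$. Substituting this into the definition $\mu_L([w])=\frac{-1}{2\pi}\int_{\partial\Sigma}d\eta$ and applying Stokes' theorem to the remaining term, $\int_{\partial\Sigma}\zeta_w=\int_{\Sigma}d\zeta_w=2\int_{\Sigma}w^*\widetilde{P}$, produces exactly the claimed formula.

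The argument contains no analytic difficulty; the part that really requires care is the bookkeeping of constants — the factor $2$ coming from squaring $\Omega_L$ into $\Omega_L^2$, the $\sqrt{-1}$ relating $\tilde\xi^{\wnab}$ to the real Maslov form $\xi^{\wnab}$, and the sign in the winding-number convention for $\mu_L$. The one conceptual point I would pin down cleanly is the interior identity $d\zeta_w=2\,w^*\widetilde{P}$: Dazord's formula (Lemma \ref{Dazord}) only supplies the analogous statement pulled back to the Lagrangian $L$, whereas here the interior of $\Sigma$ need not lie in $L$, so I would justify the interior version directly from the Chern--Weil interpretation of $\widetilde{P}$ as the curvature form of the canonical bundle for the unitary connection $\wnab$, an identity valid on all of $M$ and not merely along $L$.
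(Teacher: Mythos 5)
Your proof is correct and follows essentially the same route as the paper's: trivialize $K^2(M)$ over $\Sigma$ by $\Omega_w^2$, differentiate $\Omega_L^2=e^{\sqrt{-1}\eta}\Omega_w^2$ over $\partial\Sigma$ to get $d\eta=2(\partial w)^*\xi^{\wnab}-\zeta_w$, and apply Stokes' theorem with the interior identity $d\zeta_w=2\,w^*\widetilde{P}$ — your $\zeta_w$ being exactly the paper's $2\xi^{\wnab}_w$, defined by $\wnab\Omega_w^2=2\sqrt{-1}\xi^{\wnab}_w\otimes\Omega_w^2$. Your one deviation, justifying the interior identity by the Chern--Weil description of $\widetilde{P}$ as the curvature of the canonical bundle rather than by citing Lemma \ref{Dazord}, is if anything the more careful reading, since the paper invokes Dazord's formula, which as stated concerns only pullbacks along the Lagrangian immersion, while the identity needed holds on all of $\Sigma$ for the same underlying curvature reason.
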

\begin{proof}
The proof is parallel to \cite{CG}.  Define a 1-form $\xi^{\wnab}_w$ on $\partial\Sigma$ by $\wnab \Omega_w^2=2\sqrt{-1}\xi^{\wnab}_w\otimes \Omega_w^2$. Since $\Omega_L^2=e^{\sqrt{-1}\eta}\Omega_w^2$, we see
\begin{align}\label{xi1}
2\xi^{\wnab}=2\xi^{\wnab}_w+d\eta
\end{align}
over $\partial\Sigma$.
On the other hand, by Lemma \ref{Dazord}, we have $d\xi_{w}^{\wnab}=w^*\widetilde{P}$ and hence, the Stokes theorem implies
\begin{align}\label{xi2}
\int_{\Sigma} w^*\widetilde{P}=\int_{\partial \Sigma} \xi_{w}^{\wnab}.
\end{align}
Therefore, by \eqref{xi1} and \eqref{xi2}, we have
\begin{align*}
\mu_L([w])=\frac{-1}{2\pi}\int_{\partial{\Sigma}}d\eta=\frac{1}{\pi}\int_{\partial \Sigma} \xi^{\wnab}_w-\frac{1}{\pi}\int_{\partial \Sigma}(\partial w)^*\xi^{\wnab}
=\frac{1}{\pi}\int_{ \Sigma} \widetilde{P}-\frac{1}{\pi}\int_{\partial \Sigma}(\partial w)^*\xi^{\wnab}.
\end{align*}
This proves the formula.
\end{proof}

Now, we assume $M$ is a K\"ahler manifold. Then, the tensor field $\widetilde{S}$ defined by \eqref{wS} for the Levi-Civita connection $\Nab$ is all symmetric and has the same information of the second fundamental form of $\phi$.  Moreover, the Maslov form for the Levi-Civita connection $\Nab$ coincides with a  mean curvature form $\alpha_H$. If furthermore the Ricci form has a weight function $f$ satisfying \eqref{almE}, then 
\begin{align*}
\alpha_K=\xi^{\wnab}
\end{align*}
 for a unitary connection defined by $\wnab:=\Nab+d^cf\otimes J$ (See \cite{SmW}). Moreover, we see $\widetilde{P}=\rho_f=C\omega$. On the other hand,  Dazord's formula for Lagrangian submanifold in a K\"ahler manifold, i.e., $d\alpha_H=\phi^*\rho$ implies
\begin{align*}
d\alpha_K=d\alpha_H-n\phi^*dd^cf=\phi^*\rho-n\phi^*dd^cf=C\phi^*\omega=0.
\end{align*}
Namely, $\alpha_K$ is always a closed form. In particular, $\alpha_K$ defines a cohomology class $[\alpha_K]\in H^1(L, \mathbb{R})$.

Recall that a Lagrangian submanifold $L$ in a symplectic manifold $(M, \omega)$ is called {\it monotone} if  $L$ satisfies
\begin{align*}
\mu_L([w])={C'}\int_{\Sigma} w^*\omega
\end{align*}
for any $(w,\partial w): (\Sigma, \partial\Sigma)\rightarrow (M,L)$ and some positive constant $C'$ which is independent on the choice of $w$ (cf. \cite{Oh4}).  Note that a monotone Lagrangian submanifold exists only when $M$ is a monotone symplectic manifold. The following result is a generalization of the result due to Oh \cite{Oh3}, H.Ono \cite{Ono} and Cieliebak-Goldstein \cite{CG}:
\begin{proposition}\label{monotone}
Let $(M, \omega, J)$ be a K\"ahler manifold satisfying $\rho=C\omega+ndd^cf$ and $L$ a Lagrangian submanifold in $M$. Then, we have the following:
\begin{enumerate}
\item For any Hamiltonian deformation $\phi_s$ of $L$, $[\alpha_{K_s}]$ gives the same cohomology class.
\item Suppose $C>0$. Then, $[\alpha_K]=0$ if and only if $\phi$ is monotone Lagrangian. In particular, any $f$-minimal Lagrangian submanifold is monotone.
\end{enumerate}
\end{proposition}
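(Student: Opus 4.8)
The plan is to handle the two assertions separately, using Lemma \ref{Ldsa} for (1) and the integral formula of Proposition \ref{intg} for (2), throughout invoking the canonical unitary connection $\wnab=\Nab+d^cf\otimes J$, for which it was recorded above that $\widetilde{P}=\rho_f=C\omega$ and $\xi^{\wnab}=\alpha_K$. For (1), I would differentiate $\alpha_{K_s}$ along the Hamiltonian deformation and check that the result is exact. Since $\phi_s$ is Hamiltonian there is $u_s\in C^\infty(L)$ with $\alpha_{V_s}=du_s$, and formula \eqref{dsaK} reads $\frac{d}{ds}\alpha_{K_s}=-d\delta_f\alpha_{V_s}+\phi_s^*(i_{V_s}\rho_f)$. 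The first term is $-d\delta_f du_s=d\Delta_f u_s$, which is exact; for the second, the hypothesis \eqref{almE} in the form $\rho_f=C\omega$ gives $\phi_s^*(i_{V_s}\rho_f)=C\,\phi_s^*(i_{V_s}\omega)=C\,\alpha_{V_s}=C\,du_s$, again exact. Hence $\frac{d}{ds}\alpha_{K_s}=d(\Delta_f u_s+Cu_s)$ is exact for every $s$, so the class $[\alpha_{K_s}]$ does not depend on $s$.

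For (2), with the above connection Proposition \ref{intg} specializes to
\[
\mu_L([w])=\frac{C}{\pi}\int_\Sigma w^*\omega-\frac{1}{\pi}\int_{\partial\Sigma}(\partial w)^*\alpha_K .
\]
If $[\alpha_K]=0$, then $\alpha_K=dg$ for some $g\in C^\infty(L)$, and since $\partial\Sigma\cong S^1$ is a closed curve, Stokes kills the boundary integral; thus $\mu_L([w])=\frac{C}{\pi}\int_\Sigma w^*\omega$ for every $w$, i.e. $\phi$ is monotone with $C'=C/\pi>0$ (here $C>0$ is used). The ``in particular'' clause is then immediate: an $f$-minimal $\phi$ has $K=0$, so $\alpha_K=0$ and a fortiori $[\alpha_K]=0$, whence $\phi$ is monotone.

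For the converse, assume $\phi$ is monotone with constant $C'>0$. Comparing the monotonicity identity $\mu_L([w])=C'\int_\Sigma w^*\omega$ with the displayed formula: taking $w$ whose boundary is a constant loop (so the $\alpha_K$-term vanishes by closedness) gives $C'\int_\Sigma w^*\omega=\frac{C}{\pi}\int_\Sigma w^*\omega$, and choosing $w$ a nonconstant holomorphic sphere — of positive symplectic area, which exists because $C>0$ forces $M$ to be Fano — pins $C'=C/\pi$. Feeding this back into the formula yields $\int_\gamma\alpha_K=0$ for every loop $\gamma\subset L$ bounding a disk in $M$. Since $M$ is Fano, hence simply connected, \emph{every} loop in $L$ bounds a disk in $M$, so $\int_\gamma\alpha_K=0$ for all $\gamma$; as $\alpha_K$ is closed, this gives $[\alpha_K]=0$.

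I expect the only delicate point to be this reverse implication in (2): fixing the proportionality constant to $C'=C/\pi$ and guaranteeing that each loop of $L$ can be capped by a disk in $M$. Both rest on $M$ being compact Fano — simply connected and carrying positive-area holomorphic spheres — which is the content of $C>0$; I would state compactness (and hence simple connectivity) explicitly where it is used. By contrast part (1) and the forward direction of (2) are essentially formal once \eqref{dsaK} and Proposition \ref{intg} are available.
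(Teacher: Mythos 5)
Your proof of (1) is exactly the paper's: specialize \eqref{dsaK} using $\rho_f=C\omega$ and $\alpha_{V_s}=du_s$ to get $\frac{d}{ds}\alpha_{K_s}=d(\Delta_f u_s+Cu_s)$, then integrate in $s$. For (2), the paper's entire recorded proof is the sentence ``the second assertion follows from Proposition \ref{intg}'', and your forward implication --- specializing Proposition \ref{intg} to $\wnab=\Nab+d^cf\otimes J$, for which $\xi^{\wnab}=\alpha_K$ and $\widetilde{P}=\rho_f=C\omega$, so that $[\alpha_K]=0$ kills the boundary term and yields monotonicity with $C'=C/\pi>0$ --- together with the ``in particular'' clause is plainly the intended argument.

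Your converse, however, contains a genuine gap, one which the paper's one-line citation silently shares: the two facts you invoke (existence of a sphere of positive symplectic area to pin $C'=C/\pi$, and simple connectivity of $M$ so that every loop of $L$ bounds) are properties of \emph{compact} manifolds with positive first Chern class, and compactness is neither among the hypotheses of Proposition \ref{monotone} nor a consequence of $C>0$, contrary to your parenthetical ``which is the content of $C>0$''. Indeed, $\mathbb{C}^n$ with $f=-\frac{C}{4n}|z|^2$ satisfies $\rho=C\omega+ndd^cf$ for every $C>0$ (since $\omega_{st}=dd^c(|z|^2/4)$), yet $\pi_2(\mathbb{C}^n)=0$, so no sphere can pin the constant; and there the implication ``monotone $\Rightarrow[\alpha_K]=0$'' is actually \emph{false}: the product torus $T^n_r$ is monotone for every $r$ (with $C'=2/(\pi r^2)$), while $K=\big(\frac{C}{2}-\frac{1}{r^2}\big)\vec{x}$ gives $\alpha_K=\frac{Cr^2-2}{2}\sum_i d\theta_i$, hence $[\alpha_K]\neq0$ whenever $r^2\neq 2/C$. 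So compactness must be \emph{imposed} as an extra hypothesis, not derived; you partly anticipate this by saying you would state it explicitly, but as written your chain ``$C>0$ $\Rightarrow$ Fano $\Rightarrow$ simply connected, positive-area spheres'' breaks at the first arrow. Granting compactness, your argument is correct and can even be lightened: you do not need nonconstant holomorphic spheres (Mori theory); since compact Fano manifolds are simply connected, Hurewicz gives $\pi_2(M)\cong H_2(M;\Z)$, and $[\omega]\neq0$ then produces a spherical class of nonzero area, which pins $C'=C/\pi$. (A small separate point: for a sphere attached at a point of $L$, the boundary term vanishes because the boundary map is constant, not ``by closedness''.)
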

\begin{proof}
Since $\rho_f=C\omega$ and $\alpha_{V_s}=du_s$ for $u_s\in C^{\infty}(L)$, \eqref{dsaK} becomes
\begin{align*}
\frac{d}{ds}\alpha_{K_s}=-d\delta_f\alpha_{V_s}+C\alpha_{V_s}=d(\Delta_fu_s+C u_s).
\end{align*}
Integrating this equation, we obtain
\begin{align*}
\alpha_{K_s}-\alpha_{K_0}=d\Big(\int_0^s (\Delta_fu_s+C u_s)ds\Big).
\end{align*}
This proves (1). The second assertion follows from Proposition \ref{intg}. 
\end{proof}

\section{Examples: A torus orbit in weighted projective spaces}

As shown in Example \ref{toric}, there exists a $f$-minimal Lagrangian torus orbit in a toric K\"ahler manifold for any $T^n$-invariant function $f$. In this section, we specify the $f$-minimal Lagrangian torus orbit in weighted projective space for a canonical potential function $f$, and prove the Hamiltonian $f$-stability of the orbits.

\subsection{Weighted projective spaces}
Let $\mathbb{C}^{n+1}$ be the complex Euclidean space with the standard K\"ahler structure $(\omega_{st}, J_{st})$. Note that we have
\begin{align}\label{F}
\omega_{st}=dd^c F\quad {\rm with}\quad F(z):=\frac{1}{4}|z|^2.
\end{align}

  Fix ${\bf a}:=(a_1,\ldots, a_{n+1})\in \mathbb{N}^{n+1}$, and we assume the highest common divisor of all $a_j$'s is equal to $1$.  Define a weighted action of $S^1:=\{e^{\sqrt{-1}\theta}\in \mathbb{C}^*; \theta\in \mathbb{R} \}$ on $\mathbb{C}^{n+1}$  by
\begin{align}\label{action}
e^{\sqrt{-1}\theta}\cdot (z^1,\ldots, z^{n+1}):=(e^{\sqrt{-1}a_1\theta}z^1, \ldots, e^{\sqrt{-1}a_{n+1}\theta}z^{n+1}).
\end{align}
This action is Hamiltonian and a moment map $\mu: \mathbb{C}^{n+1}\rightarrow \mathbb{R}$ is given by
\begin{align*}
\mu(z):=-\frac{1}{2}\sum_{i=1}^{n+1}a_i|z^i|^2. 
\end{align*}
For a regular value $c\in \mathbb{R}$, $S^1$ acts on $\mu^{-1}(c)$ and the symplectic quotient space $\mathbb{C}P^n_{\bf a}:=\mu^{-1}(c)/S^1$ is so called the {\it weighted projective space}.  The standard K\"ahler structure on $\mathbb{C}^{n+1}$ induces a {\it canonical} K\"ahler structure $(\omega_c, J_c, g_c)$ on $\mathbb{C}P^n_{\bf a}$ in the sense of Theorem 7.2.3 in \cite{Fut2}, and $\mathbb{C}P^n_{\bf a}$ becomes a toric K\"ahler orbifold.  
The orbifold structure is described as follows (cf. \cite{Abreu}): For $[z]=[z^1,\ldots, z^{n+1}]\in \mathbb{C}P^n_{\bf a}$, the orbifold structure group at $[z]$ is given by $\mathbb{Z}/m\mathbb{Z}$, where $m$ is the highest common divisor of the set  $\{a_j; j=1,\ldots, n+1\ {\rm s.t.}\ z^j\neq 0\}$. In particular, $[z]$ is a smooth point if and only if $m=1$. Thus, if $z^j\neq 0$ for any $j=1,\ldots, n+1$, then $[z]$ is a smooth point since the highest common divisor of all $a_j$ is equal to 1. Moreover, if elements in any subset of $\{a_j\}_{j=1}^n$ are relatively prime (e.g. all $a_j$ are  prime numbers), then $\mathbb{C}P^n_{\bf a}$ is a smooth manifold.

According to \cite{Kaji}, we compute the Ricci form $\rho_c$ of $\mathbb{C}P^n_{\bf a}$ at a smooth point as follows:
Let us denote the inclusion and the natural projection of $\mu^{-1}(c)$ by $\iota_c:\mu^{-1}(c)\rightarrow \mathbb{C}^{n+1}$ and $\pi: \mu^{-1}(c)\rightarrow \mathbb{C}P^n_{\bf a}$, respectively.  
We set $\mu^{-1}(c)^{reg}:=\{z\in\mu^{-1}(c); [z]\ {\rm is\ a\ smooth\ point}\}$. Note that  $\mu^{-1}(c)^{reg}$ is an open and dense subset of $\mu^{-1}(c)$ and $S^1$ acts on $\mu^{-1}(c)^{reg}$ freely. The tangent space of $\mu^{-1}(c)$ at $z\in \mu^{-1}(c)^{reg}$ is decomposed into $T_z\mu^{-1}(c)=E_z\oplus\mathfrak{g}_z$, where $\mathfrak{g}_z$ denotes the vector space generated by the $S^1$-action, and $E_z$ is the orthogonal complement of $\mathfrak{g}_z$ in  $T_z\mu^{-1}(c)$. Note that  $E_z$ is $J_{st}$-invariant subspace and $\pi_*|_{E_z}: E_z\rightarrow T_{[z]}\mathbb{C}P^n_{\bf a}$ is an isomorphism at $z\in \mu^{-1}(c)^{reg}$.

 Denote the $S^1$-orbit through $z$ by $\mathcal{O}_z$. Since any  orbit $\mathcal{O}_z$ through $z\in \mu^{-1}(c)^{reg}$ is diffeomorphic to $S^1$, we can identify the orbit $\mathcal{O}_z$ with $S^1$. Fix a non-zero element $v\in Lie(S^1)$, and set  $\nu:=\tilde{v}^*$ a dual 1-form on $\mathcal{O}_z\simeq S^1$ of the fundamental vector field $\tilde{v}$. Then, the volume of $\mathcal{O}_z$ w.r.t. the induced metric from $g_{st}$ is computed by
\begin{align}\label{ovolume}
{\rm Vol}_g(\mathcal{O}_z)=|\nu|_g(z)\int_{\mathcal{O}_z}\nu.
\end{align}
Here, the integral $\int_{\mathcal{O}_z}\nu$ is independent of the choice of the principal orbit $\mathcal{O}_z$.  Moreover, the function $|\nu|_g$ is $S^1$-invariant, and hence, we define a well-defined function $|\check{\nu}|$ on $\mathbb{C}P^n_{\bf a}=\mu^{-1}(c)/S^1$ so that $|\nu|_g=|\check{\nu}|\circ \pi$ on $\mu^{-1}(c)$.

By Proposition 2.3 in \cite{Kaji}, the Ricci form $\rho_c$ of $\mathbb{C}P^n_{\bf a}$ at a smooth point  satisfies
\begin{align}\label{ricform}
\pi^*\rho_c=d\gamma_c'+\pi^*dd^c\log|\check{\nu}|
\end{align}
since $\mathbb{C}^{n+1}$ is Ricci-flat, where $\gamma_c'$ is a 1-form on $\mu^{-1}(c)$ defined by
\begin{align*}
\begin{cases}
\gamma_c'(\tilde{X})=-\frac{1}{2}{\rm div}_{\mathbb{C}^{n+1}}(J_{st}\tilde{X})\ {\rm for}\ \tilde{X}\in \mathfrak{g}_z&\\
\gamma_c'(Z)=0\ {\rm for}\ Z\in E_z,
\end{cases}
\end{align*}
where ${\rm div}_{\mathbb{C}^{n+1}}$ is the divergence on $\mathbb{C}^{n+1}$. In our setting, we compute $\gamma'_c$ as follows:

\begin{lemma}\label{lem1}
At the point $z\in {\mu}^{-1}(c)^{reg}$, we have
\begin{align*}
\gamma_c'=\Big(-\frac{1}{c}\sum_{i=1}^{n+1}a_i\Big)(\iota_c^*d^cF-\pi^*d^c\check{F}),
\end{align*}
where $\check{F}$ is the induced function on $\mathbb{C}P^n_{\bf a}$ from $F$ defined by \eqref{F}, i.e., $\iota_c^*F=\pi^*\check{F}$.
\end{lemma}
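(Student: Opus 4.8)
The plan is to verify the claimed identity pointwise at $z\in\mu^{-1}(c)^{reg}$ by evaluating both sides on the orthogonal splitting $T_z\mu^{-1}(c)=E_z\oplus\mathfrak{g}_z$. Since both $\gamma_c'$ and the right-hand side are genuine $1$-forms on $\mu^{-1}(c)$, it suffices to match them on the vertical line $\mathfrak{g}_z$ and on the horizontal space $E_z$ separately. Throughout I will use the elementary identity $d^c\phi=-d\phi\circ J$ (consistent with the normalization $\omega_{st}=dd^cF$ and with $dd^c=2\sqrt{-1}\p\bar\p$), which holds both upstairs with $J_{st}$ and downstairs with $J_c$, together with the consequence $\iota_c^*dF=\pi^*d\check F$ of the defining relation $\iota_c^*F=\pi^*\check F$.

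First I treat the vertical direction. Writing $z^j=x^j+\sqrt{-1}y^j$, the fundamental vector field of the weighted $S^1$-action \eqref{action} is
\[
\tilde v=\sum_{j=1}^{n+1}a_j\Big(-y^j\frac{\p}{\p x^j}+x^j\frac{\p}{\p y^j}\Big),
\]
so that $J_{st}\tilde v=-\sum_j a_j\big(x^j\,\p/\p x^j+y^j\,\p/\p y^j\big)$ is a weighted radial field, whence ${\rm div}_{\mathbb{C}^{n+1}}(J_{st}\tilde v)=-2\sum_j a_j$ and therefore $\gamma_c'(\tilde v)=-\tfrac12{\rm div}_{\mathbb{C}^{n+1}}(J_{st}\tilde v)=\sum_j a_j$. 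On the other side, $\tilde v$ is tangent to the $S^1$-fibres, so $\pi_*\tilde v=0$ and hence $\pi^*d^c\check F(\tilde v)=0$; moreover a direct evaluation gives $d^cF(\tilde v)=\tfrac12\sum_j a_j|z^j|^2$, which on $\mu^{-1}(c)$ equals $-c$ because $\mu(z)=-\tfrac12\sum_j a_j|z^j|^2=c$. Thus the right-hand side applied to $\tilde v$ is $\big(-\tfrac1c\sum_i a_i\big)(-c-0)=\sum_i a_i$, matching $\gamma_c'(\tilde v)$.

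Next I treat the horizontal direction, where $\gamma_c'(Z)=0$ for $Z\in E_z$; I must show the right-hand side also annihilates $E_z$, i.e. $(\iota_c^*d^cF-\pi^*d^c\check F)(Z)=0$. Since $E_z$ is $J_{st}$-invariant and the reduced complex structure satisfies $J_c\pi_*Z=\pi_*J_{st}Z$ for $Z\in E_z$, I compute $\pi^*d^c\check F(Z)=-d\check F(J_c\pi_*Z)=-d\check F(\pi_*J_{st}Z)=-(\iota_c^*dF)(J_{st}Z)=-dF(J_{st}Z)$, where the fact that $J_{st}Z\in E_z\subset T_z\mu^{-1}(c)$ legitimizes the use of $\iota_c^*dF=\pi^*d\check F$. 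On the other hand $(\iota_c^*d^cF)(Z)=d^cF(Z)=-dF(J_{st}Z)$, so the two terms cancel exactly on $E_z$.

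Combining the two cases shows that $\gamma_c'$ and $\big(-\tfrac1c\sum_i a_i\big)(\iota_c^*d^cF-\pi^*d^c\check F)$ agree on all of $T_z\mu^{-1}(c)$, which proves the lemma. The only genuinely delicate point is the horizontal computation: it hinges on keeping track of the two distinct complex structures ($J_{st}$ upstairs and the induced $J_c$ downstairs) and on the fact that $d^c$ does \emph{not} commute with $\pi^*$ in general. The cancellation works precisely because $E_z$ is $J_{st}$-invariant, so that $\pi_*$ intertwines $J_{st}$ and $J_c$, and because $F$ descends to $\check F$; by contrast, the vertical computation is the routine evaluation of the divergence of a radial field.
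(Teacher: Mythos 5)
Your proof is correct and follows essentially the same route as the paper: the decisive computation is the evaluation on the fundamental vector field, where $\gamma_c'(\tilde v)=-\tfrac12{\rm div}_{\mathbb{C}^{n+1}}(J_{st}\tilde v)=\sum_i a_i$ and $d^cF(\tilde v)=-\mu(z)=-c$, exactly as in the paper. The only difference is that the paper's proof merely asserts ``it is sufficient to check for a fundamental vector field'' and leaves the horizontal direction implicit, whereas you explicitly verify that $\iota_c^*d^cF-\pi^*d^c\check F$ annihilates $E_z$ (via the $J_{st}$-invariance of $E_z$, the intertwining $J_c\pi_*=\pi_*J_{st}$, and $\iota_c^*dF=\pi^*d\check F$), which cleanly justifies the reduction the paper takes for granted.
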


\begin{proof}
It is sufficient to check for a fundamental vector field $\tilde{X}$. By \eqref{action}, a fundamental vector field is given by
\begin{align}\label{fv}
\tilde{X}_z=\frac{\partial}{\partial \theta}\Big|_{z}=\sum_{i=1}^{n+1}\Big(-a_iy^i\frac{\partial}{\partial x^i}+a_ix^i\frac{\partial}{\partial y^i}\Big)
\end{align}
where $z^i=x^i+\sqrt{-1}y^i$, and hence, we have
\begin{align*}
J_{st}\tilde{X}_z=\sum_{i=1}^{n+1}\Big(-a_ix^i\frac{\partial}{\partial x^i}-a_iy^i\frac{\partial}{\partial y^i}\Big).
\end{align*}
Therefore, we see
\begin{align*}
\gamma_c'(\tilde{X}_z)=-\frac{1}{2}{\rm div}_{\mathbb{C}^{n+1}}(J_{st}\tilde{X}_z)=-\frac{1}{2}\sum_{i=1}^{n+1}\Big{\{} \frac{\partial}{\partial x^i}(-a_ix^i)+\frac{\partial}{\partial y^i}(-a_iy^i)\Big{\}}=\sum_{i=1}^{n+1}a_i.
\end{align*}
On the other hand, we see
\begin{align*}
d^cF(\tilde{X}_z)&=-dF(J_{st}\tilde{X}_z)=\frac{-1}{2}\sum_{i=1}^{n+1}(x^idx^i+y^idy^i)\Big{\{}\sum_{i=1}^{n+1}\Big(-a_ix^i\frac{\partial}{\partial x^i}-a_iy^i\frac{\partial}{\partial y^i}\Big) \Big{\}}\\
&=\frac{1}{2}\sum_{i=1}^{n+1}\{a_i(x^i)^2+a_i(y^i)^2\}=\frac{1}{2}\sum_{i=1}^{n+1}a_i|z^i|^2=-\mu(z)=-c
\end{align*}
since we suppose $z\in\mu^{-1}(c)$. Therefore, we obtain
\begin{align*}
\gamma_c'(\tilde{X}_z)=\Big(-\frac{1}{c}\sum_{i=1}^{n+1}a_i\Big)d^cF(\tilde{X}_z)
\end{align*}
This proves the lemma.
\end{proof}

Moreover, the equation \eqref{fv} shows that we can take $\nu$ so that
\begin{align}\label{nu}
|\nu|_g(z)=|\tilde{X}_z|_g=\Big(\sum_{i=1}^{n+1}a_i^2|z^i|^2\Big)^{1/2}.
\end{align}

By Lemma \ref{lem1}, the formula \eqref{ricform} becomes
\begin{align*}
\pi^*\rho_c&=\Big(-\frac{1}{c}\sum_{i=1}^{n+1}a_i\Big)(\iota_c^*dd^cF-\pi^*dd^c\check{F})+\pi^*dd^c\log|\check{\nu}|\\
&=\pi^*\Big{\{}\Big(-\frac{1}{c}\sum_{i=1}^{n+1}a_i\Big)(\omega_c-dd^c\check{F})+dd^c\log|\check{\nu}|\Big{\}}
\end{align*}
Since $\pi$ is surjective, the Ricci form $\rho_c$ of $\mathbb{C}P^{n}_{\bf a}$ at a smooth point is expressed by
\begin{align}
\rho_c&=\Big(-\frac{1}{c}\sum_{i=1}^{n+1}a_i\Big)\omega_c+ndd^cf_{\bf a}\quad {\rm with}\quad f_{\bf a}:=\frac{1}{n}\Big{\{}\Big(\frac{1}{c}\sum_{i=1}^{n+1}a_i\Big)\check{F}+\log|\check{\nu}|\Big{\}}.\label{rhocf}
\end{align}
  We call this function $f_{\bf a}$ the {\it canonical} weight function for $\mathbb{C}P^n_{\bf a}$. Note that $f_{\bf a}$ is constant on $\mathbb{C}P^n_{\bf a}$ if and only if $\pi^*f_{\bf a}=\iota_c^*(\kappa F+\log|\nu|_g)$ is constant on $\mu^{-1}(c)$, where $\kappa=(1/c)\sum_{i=1}^{n+1}a_i$. This is actually the case when $a_1=\ldots=a_{n+1}$ and the $S^1$-action is the standard Hopf action. 

\subsection{Clifford torus}
Let us consider the standard $T^{n+1}$-action on $\mathbb{C}^{n+1}$ defined by
\begin{align*}
(e^{\sqrt{-1}\theta_1},\ldots,e^{\sqrt{-1}\theta_{n+1}})\cdot(z^1,\ldots, z^{n+1}):=(e^{\sqrt{-1}\theta_1}z^1,\ldots, e^{\sqrt{-1}\theta_{n+1}}z^{n+1}).
\end{align*}
and the {\it Clifford torus} $T^{n+1}_{r}$of radius $r$:
\begin{align*}
T^{n+1}_r:=T^{n+1}\cdot (r,\ldots, r).
\end{align*}
$T^{n+1}_{r}$ is Lagrangian in $\mathbb{C}^{n+1}$ and invariant under the weighted $S^1$-action. Thus, $T^{n+1}_{r}$ is contained in the level set $\mu^{-1}(c)$ of the $S^1$-action with 
\begin{align}\label{c}
c=-\frac{r^2}{2}\sum_{i=1}^{n+1}a_i.
\end{align}

The orbit space $T^n_{\bf a}:=T^{n+1}_r/S^1$ is a Lagrangian submanifold without any singular point in the symplectic quotient space $\mathbb{C}P^n_{\bf a}$. Denote the mean curvature vector of $\phi_c: T^n_{\bf a}\rightarrow \mathbb{C}P^{n}_{\bf a}$ by $H_c$, and set $\beta_{H_c}:=\phi^*_c(i_{H_c}\omega_c)$.  Then, we obtain the following:
\begin{proposition}\label{lem3} 
The mean curvature form of $\phi_c: T^n_{\bf a}\rightarrow \mathbb{C}P^{n}_{\bf a}$ is given by
\begin{align*}
\beta_{H_c}=n\phi^*_cd^cf_{\bf a}.
\end{align*}
In particular, $T^n_{\bf a}$ is $f_{\bf a}$-minimal in $\mathbb{C}P^{n}_{\bf a}$ for the canonical potential function $f_{\bf a}$ defined by \eqref{rhocf}.
\end{proposition}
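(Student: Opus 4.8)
The plan is to prove the equivalent identity $\beta_{H_c}=n\phi^*_cd^cf_{\bf a}$ by pushing every quantity through the K\"ahler reduction $\pi\colon\mu^{-1}(c)\to\mathbb{C}P^n_{\bf a}$ down to explicit functions on $\mathbb{C}^{n+1}$. A $1$-form on the orbit $T^n_{\bf a}$ is determined by its values on $T_{[z]}T^n_{\bf a}=\pi_*(T_zT^{n+1}_r\cap E_z)$, so it suffices to test both sides on vectors $\pi_*v$ with $v\in T_zT^{n+1}_r\cap E_z$. The key tool I would use is an orbit-volume formula: for a Lagrangian orbit $\mathcal{O}=G\cdot p$ of a group of holomorphic isometries, the mean curvature form equals $\alpha_H=-d^c\log V$, where $V$ is the ($G$-invariant) function giving the Riemannian volume of the orbit through each point. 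This follows from the first variation formula \eqref{firstv} with $f=0$: flowing $\mathcal{O}$ by a $G$-invariant normal field $\xi$ carries orbits to orbits, so matching $\frac{d}{dt}V=-\int_{\mathcal{O}}g(H,\xi)d\mu$ against $\frac{d}{dt}V=g(\grad V,\xi)$ and using homogeneity gives $H=-\grad\log V$; since $\mathcal{O}$ is Lagrangian, $\alpha_H(Y)=g(JH,Y)=g(\grad\log V,JY)=-d^c\log V(Y)$.

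I would apply this formula both upstairs, to the Clifford torus $T^{n+1}_r\subset\mathbb{C}^{n+1}$ whose orbit-volume function is $V_N(w)=(2\pi)^{n+1}\prod_i|w^i|$, and downstairs, to $T^n_{\bf a}\subset\mathbb{C}P^n_{\bf a}$ (an orbit of the residual torus $T^{n+1}/S^1$) with orbit-volume function $V_L$. The two are linked by the reduction: $T^{n+1}_r\to T^n_{\bf a}$ is a Riemannian $S^1$-bundle whose fibres are the $S^1$-orbits, and by \eqref{ovolume} and \eqref{nu} the fibre length equals $|\nu|_g(w)=(\sum_ia_i^2|w^i|^2)^{1/2}$ up to a constant; hence $\pi^*\log V_L=\log V_N-\log|\nu|_g-\mathrm{const}$ on $\mu^{-1}(c)$. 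Moreover, since $E_z$ is $J_{st}$-invariant and the canonical complex structure satisfies $J_c\pi_*=\pi_*J_{st}$ on $E_z$, any $\check h$ on $\mathbb{C}P^n_{\bf a}$ with a global extension $h$ of $\pi^*\check h$ satisfies the descending rule $(d^c\check h)(\pi_*v)=(d^ch)(v)$ for $v\in E_z$. Feeding the orbit-volume formula, the volume relation, and this rule into both sides, and using $nf_{\bf a}=\kappa\check F+\log|\check\nu|$ with $\kappa=\frac1c\sum_ia_i$ from \eqref{rhocf}, the $\log|\nu|_g$-contributions cancel and the desired identity reduces to the single relation $-d^c\log V_N=\kappa\,d^cF$ on $T^{n+1}_r$.

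Finally I would verify this relation directly. Since $\log|z^i|$ and $\theta_i=\arg z^i$ are harmonic conjugates one has $d^c\log V_N=\sum_id\theta_i$, i.e.\ the Clifford torus has mean curvature form $-\sum_id\theta_i$; on the other hand the computation in the proof of Lemma \ref{lem1} gives $d^cF(\partial_{\theta_i})=\tfrac12|z^i|^2=\tfrac12r^2$, so $\kappa\,d^cF=\tfrac{\kappa r^2}{2}\sum_id\theta_i$. These coincide exactly because all radii are equal and, by \eqref{c}, $c=-\tfrac{r^2}{2}\sum_ia_i$ forces $\kappa=\frac1c\sum_ia_i=-\tfrac{2}{r^2}$, whence $\tfrac{\kappa r^2}{2}=-1$. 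This gives $\beta_{H_c}=n\phi^*_cd^cf_{\bf a}$, equivalently $\alpha_{K_c}=\beta_{H_c}-n\phi^*_cd^cf_{\bf a}=0$, so $T^n_{\bf a}$ is $f_{\bf a}$-minimal. I expect the main obstacle to be the reduction bookkeeping---arranging the orbit-volume formula up and down together with the descending rule so that the interior fibre terms $\log|\nu|_g$ and $\log|\check\nu|$ cancel cleanly; once that is isolated, the equal-radii computation and the special value of $\kappa$ dictated by \eqref{c} make the statement collapse.
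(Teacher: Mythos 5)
Your proposal is correct, and it takes a genuinely different route from the paper's proof. The paper starts from the explicit mean curvature $H=-\tfrac{2}{r^2}\Nab F$ of the Clifford torus (its \eqref{H1}) and then invokes the general K\"ahler-reduction formula of Proposition 2.5 in \cite{Kaji}, $\pi^*\beta_{H_c}=\alpha_{H'}+\pi^*\phi_c^*(d^c\log|\check{\nu}|)$, descending the intermediate form $\alpha_{H'}$ through the submersion by the same $J_{st}$-invariance of $E_z$ that underlies your ``descending rule''. You bypass that citation entirely: you apply the orbit-volume formula $\alpha_H=-d^c\log V$ (essentially Proposition 1 and Corollary 3 of \cite{Pac}, which the paper itself uses in Example \ref{toric}) both upstairs and downstairs, and you recover the $d^c\log|\check{\nu}|$ correction from the Riemannian-submersion volume identity $V_N=\mathrm{const}\cdot|\nu|_g\cdot\pi^*V_L$ — in effect re-deriving the homogeneous case of Kajigaya's reduction formula directly from the first variation formula \eqref{firstv}. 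Both arguments then close with the same cancellation: the fibre-length terms drop out, and $\kappa=\tfrac1c\sum_ia_i=-\tfrac{2}{r^2}$ from \eqref{c} matches the remaining $\check{F}$-terms (your harmonic-conjugate identity $d^c\log|z^i|=d\theta_i$ together with $d^cF(\partial_{\theta_i})=\tfrac12|z^i|^2$ is the same computation that appears in \eqref{H1} and in the proof of Lemma \ref{lem1}). The trade-off is scope versus self-containedness: the paper's reduction formula applies to arbitrary Lagrangians in the quotient, while your argument exploits homogeneity and only works for orbits; in exchange, yours needs no external reduction lemma and makes the mechanism transparent — the mean curvature form of a Lagrangian orbit is $-d^c$ of the log orbit-volume, and orbit volumes are multiplicative under reduction. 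The only point worth making explicit is that the downstairs application of the orbit-volume formula uses that the residual torus $T^{n+1}/S^1$ acts freely near $T^n_{\bf a}$ (true, since all coordinates $z^j$ are nonzero there), so that $V_L$ is smooth and every normal vector extends to an invariant normal field.
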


\begin{proof}
First, we note that  the mean curvature vector $H$ of the Clifford torus $T^{n+1}_r\rightarrow \mathbb{C}^{n+1}$ is given by
\begin{align}\label{H1}
H=-\frac{2}{r^2}\Nab F. 
\end{align}
Denote the mean curvature vector  of the immersion $T^{n+1}_r\rightarrow \mu^{-1}(c)$ by $H'$.   Then, by Proposition 2.5 in \cite{Kaji}, we have
\begin{align}\label{H2}
\pi^*\beta_{H_c}=\alpha_{H'}+\pi^*\phi_c^*(d^c\log|\check{\nu}|)
\end{align}

We shall compute $\alpha_{H'}$.
The tangent space of $T^{n+1}_r$ is decomposed into $T_pT^{n+1}_r=E_p^l\oplus \mathfrak{g}_p$, where $E_p=E_p^l\oplus JE_p^l$. For the natural projection $\pi: T^{n+1}_r\rightarrow T^n_{\bf a}$, $\pi_*|_{E_p^l}:E_p^l\rightarrow T_{\pi(p)}T^n_{\bf a}$ is an isomorphism. We denote the Levi-Civita connections on $\mathbb{C}^{n+1}$, $\mu^{-1}(c)$ and the quotient space $\mu^{-1}(c)/S^1$ by $\Nab$, $\Nab'$ and $\Nab_c$, respectively (See \cite{Kaji} for a general description).

Take arbitrary $Z\in E_p^l$. Note that $JZ\in E_p\subset T_p\mu^{-1}(c)$. Thus, by using \eqref{H1},  we see
\begin{align*}
\alpha_{H'}(Z)_p&=g(JH'_z, Z)_p=-g(H, JZ)_p\\
&=\frac{2}{r^2}g(\Nab F, JZ)_p=\frac{2}{r^2}g(\Nab'F, JZ)_p\\
&=\frac{2}{r^2}g_c(\pi_*\Nab'F, \pi_*JZ)_{\pi(p)}=\frac{2}{r^2}g_c(\Nab_c\check{F}, J_c\pi_*Z)_{\pi(p)}\\
&=-\frac{2}{r^2}\pi^*\phi^*_cd^c\check{F}_{\pi(p)}(Z).
\end{align*}
Moreover, we have $\alpha_{H'}(\tilde{X})=-g(H', J\tilde{X})=0$ for any $\tilde{X}\in \mathfrak{g}_p$. Therefore, we obtain $\alpha_{H'}=\pi^*\phi^*_cd^c(-\frac{2}{r^2}\check{F})$.

Recall that $f_{\bf a}$ is given by
\begin{align*}
f_{\bf a}=\frac{1}{n}\Big\{\Big(\frac{1}{c}\sum_{i=1}^{n+1}a_i\Big)\check{F}+\log|\check{\nu}|\Big{\}}=\frac{1}{n}\Big(-\frac{2}{r^2}\check{F}+\log|\check{\nu}|\Big)
\end{align*}
by \eqref{rhocf} and \eqref{c}.
Therefore, by \eqref{H2}, we see
\begin{align*}
\pi^*\beta_{H_c}=\pi^*\phi^*_cd^c\Big(-\frac{2}{r^2}\check{F}\Big)+\pi^*\phi_c^*(d^c\log|\check{\nu}|)=n\pi^*\phi_c^*d^cf_{\bf a}
\end{align*}
Because $\pi$ is surjective,  this proves $\beta_{H_c}=n\phi^*_cd^cf_{\bf a}$.
\end{proof}

Since $T^{n}_{\bf a}=T^{n+1}_r/S^1$ is homogeneous and $F$ and $|\nu|$ are $T^{n+1}$-invariant functions (see \eqref{F} and \eqref{nu}), the weight function $f_{\bf a}$ is constant on $T^{n}_{\bf a}$ and the weighted Laplacian $\Delta_{f_{\bf a}}$ coincides with the usual Laplacian $\Delta$ w.r.t. the  metric $\phi^*_cg_c$. Note that $\phi^*_cg_c$ coincides with the induced flat metric on the torus $T^{n}_{\bf a}$ from the flat metric on $T^{n+1}_r$ by the construction. Therefore, by Theorem \ref{hsta}, \eqref{rhocf} and \eqref{c},  the Hamiltonian $f_{\bf a}$-stability of $T^{n}_{\bf a}$  is equivalent to
\begin{align*}
\lambda_1\geq-\frac{1}{c}\sum_{i=1}^{n+1}a_i=\frac{2}{r^2},
\end{align*}
where $\lambda_1$ is the first eigenvalue of the usual Laplacian $\Delta$. 
In the following, we restrict our attention to the case when ${\bf a}:=(1,a_2,\ldots, a_{n+1})$ for simplicity. Then, we have the following result:

\begin{theorem}\label{clifford}
Let $T^n_{\bf a}=T^{n+1}_{r}/S^1$ be the flat torus obtained by the quotient of the weighted $S^1$-action for ${\bf a}:=(1,a_2,\ldots, a_{n+1})$ on the Clifford torus $T^{n+1}_{r}$ of radius $r$.  Then, the first eigenvalue of the Laplacian $\Delta$ acting on $C^{\infty}(T^n_{\bf a})$ w.r.t. the flat metric on $T^{n}_{\bf a}$ satisfies
\begin{align*}
\lambda_1\geq \frac{2}{r^2}.
\end{align*}
Here, the equality holds if and only if at least two components of ${\bf a}=(1,a_2,\ldots,a_{n+1})$ are equal. In particular, $T^n_{\bf a}$ is $f_{\bf a}$-minimal and Hamiltonian $f_{\bf a}$-stable in $\mathbb{C}P^{n}_{\bf a}$  for the canonical weight function $f_{\bf a}$ defined by \eqref{rhocf}.
\end{theorem}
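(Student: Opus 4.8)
The plan is to reduce the statement to an elementary lattice computation, since the surrounding discussion has already done all the geometric work. Indeed, by Proposition \ref{lem3} the orbit $T^n_{\bf a}$ is $f_{\bf a}$-minimal, the weighted Laplacian $\Delta_{f_{\bf a}}$ agrees with the ordinary Laplacian $\Delta$ of the flat metric $\phi_c^*g_c$, and by Theorem \ref{hsta} together with \eqref{rhocf} and \eqref{c} the Hamiltonian $f_{\bf a}$-stability is equivalent to $\lambda_1(\Delta)\geq 2/r^2$. Thus the whole assertion follows once I compute the first nonzero eigenvalue $\lambda_1$ of the flat torus $T^n_{\bf a}=T^{n+1}_r/S^1$ explicitly.

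First I would set up angle coordinates $\theta=(\theta_1,\dots,\theta_{n+1})$ on the Clifford torus $T^{n+1}_r$, in which the induced metric is $r^2\sum_j d\theta_j^2$ and the weighted $S^1$-action is the translation flow $\theta\mapsto\theta+t{\bf a}$ generated by $\tilde X=\sum_j a_j\partial_{\theta_j}$. Two features make the quotient transparent: the orbits are straight lines in the flat torus, hence geodesics, so the fibers of $\pi:T^{n+1}_r\to T^n_{\bf a}$ are minimal; and by \eqref{nu} the fiber length $|\nu|_g=r(\sum_i a_i^2)^{1/2}$ is constant along $T^{n+1}_r$. Because $\pi$ is then a Riemannian submersion with fibers of constant length, the gradient of a basic (i.e.\ $S^1$-invariant) function $u=\pi^*\check u$ is horizontal and its norm is preserved by $\pi$; comparing Rayleigh quotients on $T^n_{\bf a}$ and on $T^{n+1}_r$ then shows that the spectrum of $\Delta$ on $T^n_{\bf a}$ is exactly the spectrum of $\Delta$ on $T^{n+1}_r$ restricted to $S^1$-invariant functions.

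Next I would diagonalize using Fourier modes. On $T^{n+1}_r$ the mode $e^{\sqrt{-1}\langle{\bf k},\theta\rangle}$ with ${\bf k}\in\mathbb{Z}^{n+1}$ is an eigenfunction of $\Delta$ with eigenvalue $r^{-2}|{\bf k}|^2$, and it is $S^1$-invariant precisely when $\langle{\bf k},{\bf a}\rangle=0$. Restricting to functions orthogonal to the constants therefore gives
\begin{align*}
\lambda_1=\frac{1}{r^2}\min\{\,|{\bf k}|^2 : {\bf k}\in\mathbb{Z}^{n+1}\setminus\{0\},\ \langle{\bf k},{\bf a}\rangle=0\,\}.
\end{align*}
The remaining point is purely arithmetic. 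If $|{\bf k}|^2=1$ then ${\bf k}=\pm e_j$ and $\langle{\bf k},{\bf a}\rangle=\pm a_j\neq0$ since every $a_j\geq1$, so no such ${\bf k}$ is admissible and $\lambda_1\geq 2/r^2$. If $|{\bf k}|^2=2$ then ${\bf k}=\pm e_i\pm e_j$ with $i\neq j$, and $\langle{\bf k},{\bf a}\rangle=0$ forces $a_i=a_j$ (the choice of equal signs is excluded because $a_i+a_j>0$); conversely any coincidence $a_i=a_j$ yields the admissible vector $e_i-e_j$. Hence $\lambda_1=2/r^2$ exactly when at least two components of ${\bf a}=(1,a_2,\dots,a_{n+1})$ coincide, and $\lambda_1>2/r^2$ otherwise; in either case $\lambda_1\geq2/r^2$, so $T^n_{\bf a}$ is Hamiltonian $f_{\bf a}$-stable by Theorem \ref{hsta}.

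I expect the only genuinely delicate step to be the identification in the second paragraph of the Laplace spectrum of the quotient with the $S^1$-invariant part of the spectrum upstairs, which is what lets me replace the a priori unfamiliar metric on $T^n_{\bf a}$ by the explicit flat lattice computation. The constancy of the fiber length coming from \eqref{nu} and the minimality of the orbits are precisely what is needed to make this reduction rigorous, after which the eigenvalue estimate reduces to the elementary observation that the shortest nonzero integer vectors orthogonal to a vector with positive entries have squared length $1$ (impossible here) or $2$ (possible if and only if two entries agree).
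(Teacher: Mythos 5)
Your proposal is correct, and it reaches the eigenvalue bound by a genuinely different route from the paper. Both proofs share the same geometric reductions --- $f_{\bf a}$-minimality from Proposition \ref{lem3}, constancy of $f_{\bf a}$ on the orbit so that $\Delta_{f_{\bf a}}=\Delta$, and Theorem \ref{hsta} with $C=2/r^2$ --- and both end with a flat-torus computation, but they access the quotient spectrum in dual ways. The paper realizes $T^n_{\bf a}$ explicitly as $\Pi_{\bf a}/\Gamma_{\bf a}$ (Proposition \ref{cl1}), computes the Gram matrix of the projected basis vectors $\overrightarrow{OA_s}$ and inverts it by hand (Lemma \ref{cl2}), then applies Oh's formula $\lambda_1=4\pi^2d^2$ and minimizes the resulting quadratic form over dual-lattice vectors with coefficients $\delta_s\in\{0,\pm 1\}$. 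You instead exploit that $\pi\colon T^{n+1}_r\to T^n_{\bf a}$ is a Riemannian submersion whose fibers are closed geodesics of constant length, so the spectrum of the quotient is the $S^1$-invariant part of the spectrum upstairs, and Fourier analysis turns the problem into minimizing $|{\bf k}|^2$ over nonzero ${\bf k}\in\mathbb{Z}^{n+1}$ with $\langle{\bf k},{\bf a}\rangle=0$, which is trivial arithmetic. In lattice language, you are using the duality fact that the dual of the orthogonally projected lattice $P(\Sigma)$ is the intersection of the dual lattice of $\Sigma$ with the hyperplane ${\bf a}^{\perp}$; the paper in effect re-derives this by explicit matrix inversion. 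Your route is shorter, gives the exact value $\lambda_1=r^{-2}\min|{\bf k}|^2$, makes the equality case immediate, and avoids the paper's restriction of the minimization to $\{0,\pm1\}$-coefficient combinations (a reduction the paper justifies only by the terse remark that $\Gamma^*_{\bf a}$ is a lattice; in your picture no such reduction is needed, since all dual vectors are visible at once). The step you flag as delicate is indeed the crux, and your justification is sound: the fibers are geodesics, hence minimal, so pullback commutes with the Laplacians, and the constant fiber length from \eqref{nu} makes Rayleigh quotients upstairs and downstairs proportional; alternatively, in this flat situation one can simply note $T^{n+1}_r/S^1=\mathbb{R}^{n+1}/(\Sigma+\mathbb{R}{\bf a})\cong\Pi_{\bf a}/P(\Sigma)$. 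What the paper's heavier computation buys in exchange is an explicit basis and fundamental domain for the quotient lattice, of possible independent interest, but it is not needed for the eigenvalue bound itself.
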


The rest of this section is devoted to give a proof of Theorem \ref{clifford}.  

We identify the flat torus $T^{n+1}_{r}$ with $\mathbb{R}^{n+1}/\Sigma$, where 
\begin{align*}
\Sigma:={\rm span}_{\mathbb{Z}}\Big{\{}{}^t\!(R,0,\ldots, 0),{}^t\!(0,R,0,\ldots, 0)\cdots, {}^t\!(0,\ldots,0,R)\Big{\}}\subset \R^{n+1}, \quad R:=2\pi r
\end{align*}
For ${x}={}^t\!({x}^1,\ldots, {x}^{n+1})\in\R^{n+1}$, we denote $[x]$ the equivalent class of $x$ in $\mathbb{R}^{n+1}/\Sigma$, namely, 
\begin{align*}
[x]=\{x+R{\bf m}; {\bf m}={}^t\!(m_1,\ldots, m_{n+1})\in\Z^{n+1}\}.
\end{align*}
 Then, 
an $S^1$-orbit $\widetilde{\mathcal{O}}_{[x]}$ through $[x]\in \R^{n+1}/\Sigma$ is given by
\begin{align*}
\widetilde{\mathcal{O}}_{[x]}={\mathcal{O}}_{x}/\Sigma, \quad \mathcal{O}_{{x}}:=\Big{\{}{}^t\!({x}^1+\theta,{x}^2+a_2\theta, \ldots, {x}^{n+1}+a_{n+1}\theta);  \theta\in \mathbb{R}\Big{\}}.
\end{align*}
Notice that any orbit intersects to the hypersurface
\begin{align*}
\tilde{\Pi}_{\bf a}:=\Pi_{\bf a}/\Sigma,\quad \Pi_{\bf a}:=\Big{\{}{}^t\!(x^1,\ldots, x^{n+1})\in \mathbb{R}^{n+1}; x^1+\sum_{i=2}^{n+1}a_ix^i=0\Big{\}}\ni O.
\end{align*}
where $O$ is the origin of $\R^{n+1}$.
Denote the intersection point of ${\mathcal{O}}_{{x}}$ and $\Pi_{\bf a}$ by $P({x})$. More precisely, 
\begin{align}
P({x})&=\overrightarrow{OP}({x})={}^t\!\Big({x}^1+\theta({x}), {x}^2+a_2\theta({x}),\ldots, {x}^{n+1}+a_{n+1}\theta({x})\Big)\quad {\rm with}\label{opx}\\
\theta({x})&:=\frac{1}{1+\sum_{i=2}^{n+1}a_i^2}\Big(-{x}^1-\sum_{i=2}^{n+1}a_i{x}^i\Big).\nonumber
\end{align}
Note that $P: \R^{n+1}\rightarrow \Pi_{\bf a}$ is a linear map.

We set
\begin{align*}
a_1:&=0, \quad
\begin{cases}
X_0:=0,\\
X_s:={\displaystyle \sum_{i=n+2-s}^{n+1}a_i}\quad ({\rm for}\ s=1,\ldots,n) \\
X_{n+1}:=X_n
\end{cases},
\quad S:=1+\sum_{i=2}^{n+1}a_i^2, \\
N_s:&=a_{n+1-s}-a_{n+2-s}=X_{s-1}-2X_s+X_{s+1}\quad ({\rm for}\ s=1,\ldots,n).
\end{align*}
Moreover, we define vectors in $\mathbb{R}^{n+1}$ by
\begin{align*}
{\bf m}_s:={}^t\!(\underbrace{0,\ldots, 0}_{n+1-s}, \underbrace{1,\ldots, 1}_{s})\quad ({\rm for}\ s=1,\ldots, n),
\end{align*}
and, denote the intersection point of the orbit $\mathcal{O}_{R{\bf m}_s}$ and $\Pi_{\bf a}$ by $A_{s}:=P(R{\bf m}_s)$. More precisely, 
\begin{align}\label{oa}
A_s&=\overrightarrow{OA_s}={}^t\!(x_{s}^1, \ldots, x_{s}^{n+1})\ {\rm with}
\begin{cases}
x_s^1=\frac{R}{S}(-X_s)\\
x_s^k=\frac{R}{S}(-a_kX_s)\quad{\rm for}\ k=2,\ldots, n+1-s\\
x_s^l=\frac{R}{S}(S-a_lX_s)\quad{\rm for}\ l=n+2-s,\ldots, n+1
\end{cases}.
\end{align}
Also, we define a symmetric matrix $A$ by
\begin{align*}
A:=\{\langle \overrightarrow{OA_s},\overrightarrow{OA_t}\rangle\}_{1\leq s, t\leq n}
\end{align*}

\begin{proposition}\label{cl1}
The vectors $\{\overrightarrow{OA_s}\}_{1\leq s\leq n}$ defines a lattice on the hyperplane $\Pi_{\bf a}$:
\begin{align*}
\Gamma_{\bf a}:={\rm span}_{\mathbb{Z}}\{\overrightarrow{OA_s}\}_{1\leq s\leq n}
\end{align*} 
Moreover, the orbit space $T^{n+1}_{1/\sqrt{n+1}}/S^1$ for the weighted $S^1$-action of ${\bf a}=(1,a_2,\ldots,a_{n+1})$ is given by  $\Pi_{\bf a}/\Gamma_{\bf a}$.
\end{proposition}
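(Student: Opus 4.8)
The plan is to realize the orbit space directly as a quotient of $\R^{n+1}$ and then push it down to the hyperplane $\Pi_{\bf a}$ through the projection $P$ of \eqref{opx}. Write $\mathbf{w}:={}^t\!(1,a_2,\dots,a_{n+1})$ for the common direction of the $S^1$-orbits, so that $\mathcal{O}_x=x+\R\mathbf{w}$. Since the $S^1$-action has period $R$ (the orbit closes up exactly when $\theta\in R\Z$, using $a_1=1$), two points of $\R^{n+1}$ represent the same point of $T^{n+1}_r/S^1=(\R^{n+1}/\Sigma)/S^1$ precisely when their difference lies in $\Lambda:=\R\mathbf{w}+\Sigma$; hence the orbit space is the quotient group $\R^{n+1}/\Lambda$. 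Because $\mathbf{w}$ is orthogonal to $\Pi_{\bf a}$, the map $P$ is exactly the orthogonal projection onto $\Pi_{\bf a}$ with $\ker P=\R\mathbf{w}$, and as $\R\mathbf{w}\subset\Lambda$ it descends to an isomorphism
\begin{align*}
\R^{n+1}/\Lambda\ \xrightarrow{\ \sim\ }\ \Pi_{\bf a}/P(\Sigma),
\end{align*}
which is an isometry of flat tori since $\Pi_{\bf a}=(\R\mathbf{w})^{\perp}$ is the horizontal space of the submersion. Thus everything reduces to the single identity $P(\Sigma)=\Gamma_{\bf a}$.

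To establish that identity, I would first write $P(\Sigma)=\mathrm{span}_{\Z}\{P(Re_1),\dots,P(Re_{n+1})\}$, where $e_1,\dots,e_{n+1}$ is the standard basis of $\R^{n+1}$. The essential observation is that these $n+1$ generators satisfy one integral linear relation: since $P(\mathbf{w})=0$ and $\mathbf{w}=\sum_j a_je_j$ (with $a_1=1$), linearity forces $\sum_{j=1}^{n+1}a_jP(Re_j)=0$, that is $P(Re_1)=-\sum_{j=2}^{n+1}a_jP(Re_j)$. Therefore $P(Re_1)$ is an integer combination of the remaining generators and may be discarded, leaving $P(\Sigma)=\mathrm{span}_{\Z}\{P(Re_2),\dots,P(Re_{n+1})\}$.

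It then remains to match these $n$ generators with the vectors $\overrightarrow{OA_s}=P(R{\bf m}_s)$. Since ${\bf m}_s=e_{n+2-s}+\dots+e_{n+1}$, we have $A_s=\sum_{j=n+2-s}^{n+1}P(Re_j)$, which is a triangular, unit-diagonal (hence unimodular over $\Z$) change of basis between $\{A_s\}_{s=1}^{n}$ and $\{P(Re_j)\}_{j=2}^{n+1}$. Consequently $\Gamma_{\bf a}=\mathrm{span}_{\Z}\{A_s\}=\mathrm{span}_{\Z}\{P(Re_j)\}_{j=2}^{n+1}=P(\Sigma)$. To check that $\{A_s\}$ is a genuine full-rank lattice in the $n$-dimensional space $\Pi_{\bf a}$ (so that $\Pi_{\bf a}/\Gamma_{\bf a}$ is really an $n$-torus), I would note that $P$ is injective on $\mathrm{span}\{e_2,\dots,e_{n+1}\}=\{x^1=0\}$, because $\ker P=\R\mathbf{w}$ and $\mathbf{w}\notin\{x^1=0\}$ (its first coordinate equals $1$); hence $P(Re_2),\dots,P(Re_{n+1})$ are linearly independent.

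All the computations here are routine linear algebra; the one step carrying the whole argument is the relation $\sum_j a_jP(Re_j)=0$ coming from $P(\mathbf{w})=0$, which is precisely what collapses the $n+1$ obvious generators of $P(\Sigma)$ down to the $n$ vectors $A_s$ while forcing a single defining relation. I expect the only real bookkeeping nuisance to be confirming that the explicit coordinates \eqref{oa} are consistent with $A_s=\sum_{j\ge n+2-s}P(Re_j)$ and with the normalization $r=1/\sqrt{n+1}$ (equivalently $|z|=1$ on $T^{n+1}_r$), but this is a direct substitution rather than a genuine obstacle.
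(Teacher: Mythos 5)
Your proof is correct, and it takes a genuinely different route from the paper's. The paper first proves Lemma \ref{cl2}: it computes the Gram matrix $A_{st}=\langle \overrightarrow{OA_s},\overrightarrow{OA_t}\rangle$ explicitly, verifies by hand that $\frac{1}{R^2}\{B_{st}\}$ given in \eqref{bmtrx1}--\eqref{bmtrx2} is its inverse, concludes that $\{\overrightarrow{OA_s}\}$ is a basis of $\Pi_{\bf a}$, and derives the expansion \eqref{opv} of $\overrightarrow{OP}(x)$ in that basis; the proof of Proposition \ref{cl1} then runs a fundamental-domain argument, using the fact that the coefficients in \eqref{opv} are integers when $x\in\Sigma$ to show the class of $x_D$ is well defined and the resulting map is bijective. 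You bypass all of this: you describe the orbit space structurally as the group quotient $\R^{n+1}/(\R\mathbf{w}+\Sigma)$, observe that the paper's map $P$ from \eqref{opx} is precisely the orthogonal projection with kernel $\R\mathbf{w}$ (so it descends to an isometric identification with $\Pi_{\bf a}/P(\Sigma)$), and reduce everything to $P(\Sigma)=\Gamma_{\bf a}$, which follows from the single relation $\sum_j a_jP(Re_j)=P(R\mathbf{w})=0$ together with the triangular, unit-diagonal (hence unimodular over $\Z$) change of generators $A_s=\sum_{j\ge n+2-s}P(Re_j)$; full rank follows since $\ker P=\R\mathbf{w}$ meets $\{x^1=0\}$ trivially. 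Your argument is shorter, computation-free, and makes transparent exactly where the hypothesis $a_1=1$ enters (it is what makes $P(Re_1)$ a redundant generator; for a general integer weight $a_1>1$ the lattice $P(\Sigma)$ could be strictly larger than $\Gamma_{\bf a}$). What the paper's heavier route buys is that Lemma \ref{cl2} is not wasted effort: the inverse Gram matrix $B_{st}$ and formula \eqref{opv} are exactly the data used afterwards to compute the dual lattice $\Gamma^*_{\bf a}$ and the first eigenvalue in the proof of Theorem \ref{clifford}, so in the paper's economy the computation is amortized, whereas your proof of Proposition \ref{cl1} would still have to be supplemented by that computation later.
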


In order to prove this proposition, we need the following lemma:

\begin{lemma}\label{cl2}
The matrix $A$ is invertible and the inverse matrix is given by a symmetric matrix
\begin{align}
A^{-1}&=\frac{1}{R^2}\{B_{st}\}_{1\leq s, t\leq n},\quad {\rm where}\nonumber\\
&B_{st}= 
\begin{cases}
    \ \ 2+N_s^2 & (t=s)\\
    -1+N_sN_{s+1} & (t=s+1)\\
        N_sN_t & (s+2\leq t\leq n)\\
  \end{cases}
  \quad {\rm for}\ s=1,\ldots, n-1\quad {\rm and}\label{bmtrx1}\\
&B_{nn}=1+N_n^2.\label{bmtrx2}
\end{align} 
In particular, $\{\overrightarrow{OA_s}\}_{1\leq s\leq n}$ gives a basis of $\Pi_{\bf a}$, and we have
\begin{align}\label{opv}
\overrightarrow{OP}({x})=\sum_{s=1}^n\frac{1}{R}\Big{\{}N_s{x}^1-({x}^{n+1-s}-{x}^{n+2-s})\Big{\}}\overrightarrow{OA_s}.
\end{align}
\end{lemma}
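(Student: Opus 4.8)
The plan is to recognize the map $P:\R^{n+1}\to\Pi_{\bf a}$ as \emph{orthogonal projection} onto $\Pi_{\bf a}$ and then reduce the whole lemma to a Sherman--Morrison rank-one correction of the classical matrix $\min(s,t)$. Set $v:={}^t\!(1,a_2,\dots,a_{n+1})$, the common direction of the orbits $\mathcal{O}_x$, so that $\Pi_{\bf a}=v^{\perp}$ and $S=\la v,v\ra$. From \eqref{opx} one reads off $\theta(x)=-\la x,v\ra/S$, hence $P(x)=x-\tfrac{\la x,v\ra}{S}v$ is exactly the orthogonal projection; in particular $\overrightarrow{OA_s}=P(R{\bf m}_s)=R{\bf m}_s-\tfrac{RX_s}{S}v$ because $\la {\bf m}_s,v\ra=\sum_{i=n+2-s}^{n+1}a_i=X_s$, which reproduces \eqref{oa}. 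Using that $P$ is a self-adjoint idempotent and $\la{\bf m}_s,{\bf m}_t\ra=\min(s,t)$, the Gram matrix becomes
\[
\la\overrightarrow{OA_s},\overrightarrow{OA_t}\ra=\la P(R{\bf m}_s),P(R{\bf m}_t)\ra=R^2\Big(\min(s,t)-\frac{X_sX_t}{S}\Big),
\]
that is $A=R^2\big(C-\tfrac1S XX^{t}\big)$ with $C_{st}:=\min(s,t)$ and $X:={}^t\!(X_1,\dots,X_n)$.

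Next I would invert $A$ by Sherman--Morrison. It is classical, and verified by a one-line multiplication $CD=I$, that $C^{-1}=D$, the tridiagonal matrix with $D_{ss}=2$ for $s<n$, $D_{nn}=1$ and $D_{s,s\pm1}=-1$. The two ingredients of the formula follow from the telescoping identity $X_s-X_{s-1}=a_{n+2-s}$ (with $X_0=0$ and $X_{n+1}=X_n$): a direct computation gives $DX=-N$ with $N={}^t\!(N_1,\dots,N_n)$, and
\[
X^{t}DX=\sum_{s=1}^n(X_s-X_{s-1})^2=\sum_{i=2}^{n+1}a_i^2=S-1 ,
\]
so $S-X^{t}C^{-1}X=1\neq0$. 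Hence $A$ is invertible and Sherman--Morrison yields
\[
R^2A^{-1}=D+\frac{(DX)(DX)^{t}}{S-X^{t}DX}=D+NN^{t},
\]
which is precisely $B=D+NN^{t}$; reading off the entries recovers \eqref{bmtrx1} and \eqref{bmtrx2}, the diagonal term $D_{nn}=1$ accounting for the special form $B_{nn}=1+N_n^2$.

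Invertibility of the Gram matrix shows that $\{\overrightarrow{OA_s}\}_{1\le s\le n}$ are linearly independent, and since they all lie in the $n$-dimensional space $\Pi_{\bf a}$ they form a basis, giving the lattice $\Gamma_{\bf a}$. For the coordinate formula \eqref{opv} I would expand $x$ in the basis $\{R{\bf m}_1,\dots,R{\bf m}_n,v\}$ of $\R^{n+1}$ (independent because the ${\bf m}_s$ span $e_2,\dots,e_{n+1}$ while $v$ supplies the $e_1$-direction): writing $x=R\sum_s c_s{\bf m}_s+\lambda v$ and applying $P$ gives $P(x)=\sum_s c_sP(R{\bf m}_s)=\sum_s c_s\overrightarrow{OA_s}$, so the $c_s$ are exactly the sought coordinates. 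Comparing coefficients of $e_1,\dots,e_{n+1}$ forces $\lambda=x^1$ and the tail-sum relations $R\sum_{s\ge n+2-j}c_s=x^j-a_jx^1$; taking consecutive differences in $j$ telescopes to $c_s=\tfrac1R\{N_sx^1-(x^{n+1-s}-x^{n+2-s})\}$, which is \eqref{opv}.

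I expect the main obstacle to be purely bookkeeping at the Neumann end $s=n$: one must use $X_{n+1}=X_n$ consistently both when checking $DX=-N$ and when telescoping the tail sums, and this boundary term is where signs and the convention on the first weight are easiest to mishandle. Everything away from that endpoint is a routine rank-one perturbation computation, so once the identities $DX=-N$ and $X^{t}DX=S-1$ are established the result drops out.
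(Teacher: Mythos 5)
Your inversion of $A$ is correct and is a genuinely different argument from the paper's. The paper verifies $\frac{1}{R^2}\sum_t A_{st}B_{tu}=\delta_{su}$ by brute force: it computes $A_{st}=\frac{R^2}{S}\big(\min(s,t)S-X_sX_t\big)$ and then uses the two summation identities $\sum_{t=1}^n X_tN_t=1-S$ and $\sum_{t=1}^{s}tN_t+s\sum_{t=s+1}^{n}N_t=-X_s$. You instead observe that $P$ is the orthogonal projection onto $v^{\perp}$ with $v={}^t\!(1,a_2,\dots,a_{n+1})$, so that $A=R^2\big(C-\tfrac1S XX^{t}\big)$ with $C_{st}=\min(s,t)$, and invert by Sherman--Morrison. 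I checked your ingredients: $C^{-1}=D$ (the Dirichlet--Neumann tridiagonal matrix), $DX=-N$ and $X^{t}DX=S-1$ all hold under the paper's conventions $X_0=0$, $X_{n+1}=X_n$, $a_1=0$, the denominator is $S-X^{t}DX=1\neq 0$, and $D+NN^{t}$ has exactly the entries \eqref{bmtrx1}--\eqref{bmtrx2}. This is shorter and explains structurally where the paper's two summation identities come from; the basis claim follows from invertibility of the Gram matrix in both arguments.

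However, the final step of your derivation of \eqref{opv} breaks down at $s=n$ --- exactly the boundary you flagged as dangerous and then did not carry out. With $T_j:=\sum_{s\ge n+2-j}c_s$ and $RT_j=x^j-a_jx^1$ for $j=2,\dots,n+1$, differencing does give $Rc_s=N_sx^1-(x^{n+1-s}-x^{n+2-s})$ for $s=1,\dots,n-1$; but $c_n$ is not a difference of two tail sums: it equals $T_2$ itself, since the correct boundary value is the empty sum $T_1=0$, not the formal value $x^1-a_1x^1=x^1$. Hence
\begin{align*}
Rc_n=x^2-a_2x^1=N_nx^1+x^2,
\end{align*}
which differs from the stated formula \eqref{opv} at $s=n$ by $-x^1/R$. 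In fact \eqref{opv} as printed is incorrect at $s=n$: for $n=1$, $a_2=2$, $R=1$, $x=(1,0)$ one has $P(x)=(4/5,-2/5)=-2\,\overrightarrow{OA_1}$, while \eqref{opv} predicts the coefficient $N_1\cdot 1-(1-0)=-3$. The paper's own proof slips at the same spot: the decomposition of $B_{nn}=1+N_n^2$ used in its second displayed line forces the convention $Y_{n+1}=Y_n$, under which $Y_{n-1}-2Y_n+Y_{n+1}=-x^2$, not $x^{1}-x^{2}$ as claimed in the next line. The error is harmless downstream, since Proposition \ref{cl1} only needs the coefficients of $P(R{\bf m})$ to be integers, which holds either way. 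So your method, executed carefully at the boundary, actually proves the corrected statement $c_n=\frac1R\{N_nx^1+x^2\}$; you should state that correction rather than assert agreement with \eqref{opv}.
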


\begin{proof}
Suppose $1\leq s\leq t\leq n$. Then, by \eqref{oa},  we compute
\begin{align*}
A_{st}:&=\langle \overrightarrow{OA_s},\overrightarrow{OA_t}\rangle\\
&=x_s^1x_t^1+\sum_{k=2}^{n+1-t} x_s^kx_t^k+\sum_{l=n+2-t}^{n+1-s}x_s^lx_t^l+\sum_{j=n+2-s}^{n+1}x_s^jx_t^j\\
&=\frac{R^2}{S^2}X_sX_t+\frac{R^2}{S^2}\sum_{k=2}^{n+1-t}(-a_kX_s)(-a_kX_t)+\frac{R^2}{S^2}\sum_{l=n+2-t}^{n+1-s}(-a_lX_s)(S-a_lX_t)\\
&\quad+\frac{R^2}{S^2}\sum_{j=n+2-s}^{n+1}(S-a_jX_s)(S-a_jX_t)\\
&=\frac{R^2}{S^2}\Big{\{}SX_sX_t-S\Big(\sum_{l=n+2-t}^{n+1-s}a_l\Big)X_s-S\Big(\sum_{j=n+2-s}^{n+1}a_j\Big)X_s-S\Big(\sum_{j=n+2-s}^{n+1}a_j\Big)X_t+sS^2\Big{\}}\\
&=\frac{R^2}{S}(-X_sX_t+sS).
\end{align*}
Note that this expression depends on the ordering $s\leq t$.

First, we consider the case when $1\leq s\leq u<n$. Then, by \eqref{bmtrx1} and \eqref{bmtrx2}, we have
\begin{align}
\frac{1}{R^2}\sum_{t=1}^nA_{st}B_{tu}&=\frac{1}{R^2}\Big{\{}\sum_{t=1}^{u-2}A_{st}N_tN_u+A_{su-1}(-1+N_{u-1}N_u)+A_{su}(2+N_u^2)+A_{su+1}(-1+N_{u+1}N_u)\nonumber\\ 
&\quad +\sum_{t=u+2}^{n}A_{st}N_uN_u\Big{\}}\nonumber\\ 
&=\frac{1}{R^2}\Big{\{}\sum_{t=1}^{n}A_{st}N_tN_u+(-A_{su-1}+2A_{su}-A_{su+1})\Big{\}}\nonumber\\
&=\frac{1}{S}\Big{[}\sum_{t=1}^{s}(-X_sX_t+tS)N_tN_u+\sum_{t=s+1}^{n}(-X_sX_t+sS)N_tN_u\nonumber\\ 
&\quad+\{-(-X_sX_{u-1}+s'S)+2(-X_sX_u+sS)-(-X_sX_{u+1}+sS)\}\Big{]}\nonumber\\
&=\frac{1}{S}\Big{[}-X_sN_u\sum_{t=1}^nX_tN_t+SN_u\Big(\sum_{t=1}^stN_t+s\sum_{t=s+1}^nN_t\Big)+X_sN_u+(s-s')S\Big{]}, \label{inveq1}
\end{align}
where we set
\begin{align*}
s':=
\begin{cases}
s& {\rm if}\ s<u\\
u-1& {\rm if}\ s=u
\end{cases}.
\end{align*}
Here, we compute the coefficient of the first term in \eqref{inveq1} as follows:
\begin{align*}
\sum_{t=1}^{n}X_tN_t&=\sum_{t=1}^nX_t(a_{n+1-t}-a_{n+2-t})\\
&=\sum_{t=1}^n\{X_ta_{n+1-t}-(X_{t-1}-a_{n+2-t})a_{n+2-t}\}\\
&=-X_0a_{n+1}+X_na_1-\sum_{t=1}^na_{n+2-t}^2=1-S
\end{align*}
since $X_t=X_{t-1}+a_{n+2-t}$ and $X_0=a_1=0$. 
Moreover, the coefficient of the second term in \eqref{inveq1} is computed by
\begin{align*}
\sum_{t=1}^{s}tN_t+s\sum_{t=s+1}^{n}N_t&=\{(s-1)X_s-2sX_s+sX_{s+1}\}+s(-a_{n+1-s}+a_1)\\
&=-X_s+sa_{n+1-s}+s(-a_{n+1-s}+a_1)=-X_s.
\end{align*}
 Therefore, \eqref{inveq1} becomes 
\begin{align*}
\frac{1}{R^2}\sum_{t=1}^nA_{st}B_{tu}&=\frac{1}{S}\Big{[}-X_sN_u(1-S)+SN_u(-X_s)+X_sN_u+(s-s')S\Big{]}\\
&=s-s'=
\begin{cases}
0& {\rm if}\ s<u\\
1& {\rm if}\ s=u
\end{cases}.
\end{align*}

Similarly, when $1\leq s< u=n$,  we see
\begin{align*}
\frac{1}{R^2}\sum_{t=1}^nA_{st}B_{tn}&=\frac{1}{R^2}\Big{\{}\sum_{t=1}^{n-2}A_{st}N_tN_n+A_{sn-1}(-1+N_{n-1}N_n)+A_{sn}(1+N_n^2)\Big{\}}\\
&=\frac{1}{R^2}\Big{\{}\sum_{t=1}^{n}A_{st}N_tN_n+(-A_{sn-1}+A_{sn})\Big{\}}\\
&=\frac{1}{S}\Big{[}-X_sN_n\sum_{t=1}^nX_tN_t+SN_n\Big(\sum_{t=1}^stN_t+s\sum_{t=s+1}^nN_t\Big)+X_s(X_{n-1}-X_n)+(s-s')S\Big{]}\\ 
&=\frac{1}{S}\Big{[}-X_sN_n(1-S)+SN_n(-X_s)+X_sN_n+(s-s')S\Big{]}\\
&=s-s'=
\begin{cases}
0& {\rm if}\ s<n\\
1& {\rm if}\ s=n
\end{cases}.
\end{align*}
This proves $A^{-1}=\frac{1}{R^2}\{B_{st}\}_{s,t}$ is the inverse matrix of $A$. In particular, $\{\overrightarrow{OA_s}\}_{1\leq s\leq n}$ is a basis of the hyperplane $\Pi_{\bf a}$.

For any $x\in \mathbb{R}^{n+1}$, we set
\begin{align*}
\overrightarrow{OP}({x}):=\sum_{s=1}^n\alpha_s(x) \overrightarrow{OA_s}.
\end{align*}
Then, we have
\begin{align*}
\alpha_s(x)=\frac{1}{R^2}\sum_{t=1}^nB_{st}\langle\overrightarrow{OP}({x}), \overrightarrow{OA_t}\rangle.
\end{align*}
Here, by \eqref{opx} and \eqref{oa}, we compute
\begin{align*}
\langle\overrightarrow{OP}({x}), \overrightarrow{OA_t}\rangle
&=\frac{R}{S}\Big{[}\{{x}^1+\theta(x)\}(-X_t)+\sum_{k=2}^{n+1-t}\{{x}^k+a_k\theta(x)\}(-a_kX_t)+\sum_{l=n+2-t}^{n+1}\{{x}^l+a_l\theta(x)\}(S-a_lX_t)\Big{]}\\
&=\frac{R}{S}\Big{[}-X_t\Big({x}^1+\sum_{i=2}^{n+1}a_i{x}^i\Big)-\theta(x)X_tS+\sum_{l=n+2-t}^{n+1}\{{x}^l+a_l\theta(x)\}S\Big{]}\\
&=\frac{R}{S}\Big{[}-X_t\Big({x}^1+\sum_{i=2}^{n+1}a_i{x}^i\Big)+S\Big(\sum_{l=n+2-t}^{n+1}{x}^l\Big)\Big{]}\\
&=\frac{R}{S}(-X_tT+Y_tS),
\end{align*}
where we set  $T:={x}^1+\sum_{i=2}^{n+1}a_i{x}^i$ and $Y_t:=\sum_{l=n+2-t}^{n+1}{x}^l$. 
Thus, by \eqref{bmtrx1} and \eqref{bmtrx2}, we see
\begin{align*}
\alpha_s(x)&=\frac{1}{R^2}\sum_{t=1}^nB_{st}\frac{R}{S}(-X_tT+Y_tS)\\
&=\frac{1}{RS}\Big{\{}\sum_{t=1}^nN_sN_t(-X_tT+Y_tS)-(-X_{s-1}T+Y_{s-1}S)+2(-X_{s}T+Y_{s}S)-(-X_{s+1}T+Y_{s+1}S)\Big{\}}\\
&=\frac{1}{RS}\Big{\{}-N_sT\sum_{t=1}^nX_tN_t+N_sS\sum_{t=1}^nY_tN_t+N_sT-(Y_{s-1}-2Y_s+Y_{s+1})S\Big{\}}\\
&=\frac{1}{RS}\Big{\{}-N_sT(1-S)+N_sS({x}^1-T)+N_sT-({x}^{n+1-s}-{x}^{n+2-s})S\Big{\}}\\
&=\frac{1}{R}\Big{\{}N_s{x}^1-({x}^{n+1-s}-{x}^{n+2-s})\Big{\}}.
\end{align*}
This proves the lemma.
\end{proof}

Now, we give a proof of Proposition \ref{cl1}.

\begin{proof}[Proof of Proposition \ref{cl1}]
Let us denote the fundamental domain of the lattice $\Gamma_{\bf a}$ by
\begin{align*}
D:=\Big{\{}\sum_{s=1}^nt_s\overrightarrow{OA_s}; 0\leq t_s\leq 1\Big{\}}\subset \Pi_{\bf a}\subset \R^{n+1}.
\end{align*}

Take an arbitrary $S^1$-orbit $\widetilde{\mathcal{O}}_{[{x}]}=\mathcal{O}_x/\Sigma$ through $[x]\in\R^{n+1}/\Sigma\simeq T^{n+1}_{1/\sqrt{n+1}}$. Since any $\mathcal{O}_{x}$ intersects to $\Pi_{\bf a}$ at $P(x)$ and $\Gamma_{\bf a}$ defines a lattice on $\Pi_{\bf a}$,  there exists $x_D\in D$ so that $\mathcal{O}_{x_D}/\Sigma=\widetilde{\mathcal{O}}_{[{x}]}$.  We shall show such $x_D$ is uniquely determined up to $\Gamma_{\bf a}$-action. Then, we obtain a well-defined map $\widetilde{\mathcal{O}}_{[x]}\mapsto x_D\in \Pi_{\bf a}/\Gamma_{\bf a}$. 

Suppose an element $x'_D\in D$ satisfies  $\mathcal{O}_{x'_D}/\Sigma=\mathcal{O}_{x_D}/\Sigma=\widetilde{\mathcal{O}}_{[{x}]}$.  Then, we may find $x'\in \mathcal{O}_{x'_D}$ so that $x'=x_D+R{\bf m}$ for some ${\bf m}:={}^t\!(m_1,\ldots,m_{n+1})\in \mathbb{Z}^{n+1}$. Moreover, we see 
\begin{align*}
x'_D=P(x')=P({x}_D+R{\bf m})=x_D+P(R{\bf m})
\end{align*}
since $P: \R^{n+1}\rightarrow \Pi_{\bf a}$ is a linear operator. Here, by \eqref{opv}, we have 
\begin{align*}
P(R{\bf m})=\sum_{s=1}^n\Big{\{}M_sm_1-(m_{n+1-s}-m_{n+2-s})\Big{\}}\overrightarrow{OA_s}.
\end{align*}
In particular, $P(R{\bf m})\in \Gamma_{\bf a}$ since $M_s$ and $m_i$ are integers. Therefore,  $x'_D$ coincides with $x_D$ up to $\Gamma_{\bf a}$-action.

By construction, the map $\widetilde{\mathcal{O}}_{[x]}\mapsto x_D\in \Pi_{\bf a}/\Gamma_{\bf a}$ is bijective, and hence, the orbit space $T^{n+1}_{1/\sqrt{n+1}}/S^1$ is identified with $ \Pi_{\bf a}/\Gamma_{\bf a}$. This proves the proposition.
\end{proof}

Let us consider the dual lattice $\Gamma^*_{\bf a}$ defined by
\begin{align*}
\Gamma^*_{\bf a}:=\{y\in \Pi_{\bf a}: \langle y, x\rangle\in\mathbb{Z}\ \forall x\in\Gamma_{\bf a}\}=\{y\in \Pi_{\bf a}: \langle y, \overrightarrow{OA_i}\rangle\in\mathbb{Z}\ \forall i=1,\ldots, n\}.
\end{align*}

It is easy to see that $\Gamma^*_{\bf a}$ is spanned by the metric dual $\{\overrightarrow{OA_s^*}\}_{s=1,\ldots,n}$ of $\{\overrightarrow{OA_s}\}_{s=1,\ldots, n}$ on $\Pi_{\bf a}$, namely,  $\overrightarrow{OA_s^*}$ is defined by the relation 
\begin{align*}
\langle \overrightarrow{OA_s^*}, \overrightarrow{OA_t}\rangle=\delta_{st}
\end{align*}
for any $s,t=1,\ldots, n$, or more precisely, 
\begin{align*}
\overrightarrow{OA_s^*}:=\frac{1}{R^2}\sum_{t=1}^n B_{st}\overrightarrow{OA_t}.
\end{align*} 
In particular, we have
\begin{align*}
\langle\overrightarrow{OA_s^*},\overrightarrow{OA_t^*} \rangle=\frac{1}{R^2}B_{st}.
\end{align*}

It is known that the first eigenvalue of $\lambda_1$ of the Laplacian $\Delta$ on the flat torus $\Pi_{\bf a}/\Gamma_{\bf a}$ is given by
\begin{align*}
\lambda_1=4\pi^2d^2,
\end{align*}
where $d$ is the shortest distance between $0$ and $\Gamma^*_{\bf a}\setminus \{0\}$ (See Lemma 5.5 in \cite{Oh1}).
We shall estimate the shortest distance, namely, we consider
\begin{align*}
d^2:={\rm min}\{|y|^2; y\in \Gamma^*_{\bf a}\setminus\{0\}\}.
\end{align*}
Since $\Gamma^*_{\bf a}$ is a lattice, it is sufficient to consider the following element:
\begin{align*}
y\in\Big{\{}\sum_{s=1}^n \delta_s\overrightarrow{OA_s^*}; \delta_s=0\ {\rm or}\ \pm1,  \forall s=1,\ldots, n\Big{\}}\setminus \{0\}.\end{align*}
Then, by \eqref{bmtrx1} and \eqref{bmtrx2}, we have
\begin{align}
|y|^2&=\frac{1}{R^2}\sum_{s,t=1}^n \delta_s\delta_tB_{st}=\frac{1}{R^2}\sum_{s=1}^{n-1}\Big(\delta_s^2B_{ss}+2\delta_s\delta_{s+1}B_{ss+1}+2\sum_{t=s+2}^n\delta_s\delta_tB_{st}\Big)+\frac{\delta_n^2}{R^2}B_{nn}\nonumber\\
&=\frac{1}{R^2}\sum_{s=1}^{n-1}\Big{\{}\delta_s^2(2+N_s^2)+2\delta_s\delta_{s+1}(-1+N_sN_{s+1})+2\sum_{t=s+2}^n\delta_s\delta_tN_sN_t\Big{\}}+\frac{\delta_n^2}{R^2}(1+N_n^2)\nonumber\\
&=\frac{2}{R^2}\sum_{s=1}^{n-1}(\delta_s^2-\delta_s\delta_{s+1})+\frac{\delta_n^2}{R^2}+\frac{1}{R^2}\Big(\sum_{s=1}^{n}\delta_sN_s\Big)^2. \label{y2}
\end{align}

Since we assume $\delta_s=0$ or $\pm1$, we easily see $\delta_s^2-\delta_s\delta_{s+1}$ is an non-negative integer and  $\delta_s^2-\delta_s\delta_{s+1}=0$ iff $\delta_s=0$ or $\delta_s=\delta_{s+1}$. We divide into three cases for combinations of $\delta_s$.
\begin{itemize}
\item The case when $\delta_{s}^2-\delta_{s}\delta_{s+1}\neq 0$ for some $s\in \{1,\ldots, n-1\}$. Let $s=s_0$ be such an element.  Then,  $\delta_{s_0}^2-\delta_{s_0}\delta_{s_0+1}\geq 1$ and \eqref{y2} implies
\begin{align*}
|y|^2\geq \frac{2}{R^2}(\delta_{s_0}^2-\delta_{s_0}\delta_{s_0+1})\geq \frac{2}{R^2}.
\end{align*}
 The equality holds in both inequalities iff 
\begin{itemize}
\item $\delta_n=\sum_{s=1}^n \delta_sN_s=0$, and
\item there exists a unique element $s_0\in \{1,\ldots, n-1\}$ s.t. $\delta_{s_0}^2-\delta_{s_0}\delta_{s_0+1}=1$ and $\delta_{s}^2-\delta_{s}\delta_{s+1}=0$ for $s\in\{1,\ldots, n-1\}\setminus\{s_0\}$.
\end{itemize} 
If this is the case, then there exists a number $t_0\leq s_0$ so that 
\begin{align*}
\begin{cases}
\delta_s=0\quad {\rm for}\quad 1\leq s\leq t_0-1\quad {\rm and}\quad s_0+1\leq s\leq n, \\
 \delta_s=\delta_{s_0}\neq 0\quad {\rm for}\quad t_0\leq s\leq s_0,
\end{cases}
\end{align*}
and 
\begin{align*}
0=\sum_{s=1}^n \delta_sN_s=\delta_{s_0}\sum_{s=t_0}^{s_0}(a_{n+1-s}-a_{n+2-s})=\delta_{s_0}(-a_{n+2-t_0}+a_{n+1-s_0}),
\end{align*}
namely, we obtain $a_{n+1-s_0}=a_{n+2-t_0}$. Conversely, if $a_{n+1-s_0}=a_{n+2-t_0}$ for some $t_0\leq s_0$, then we have $|\sum_{s=t_0}^{s_0} \overrightarrow{OA_s^*}|^2={2}/{R^2}$.
\item The case when $\delta_s^2-\delta_s\delta_{s+1}=0$ for all $s\in \{1,\ldots, n-1\}$ and $\delta_s=0$ for all $s\in \{1,\ldots, n-1\}$. Then $\delta_n=\pm 1$ and \eqref{y2} implies
\begin{align*}
|y|^2=\frac{1}{R^2}+\frac{1}{R^2}N_n^2=\frac{1}{R^2}(1+a_2^2)\geq \frac{2}{R^2},
\end{align*}
where the equality holds iff $a_2=1$. 
\item The case when $\delta_s^2-\delta_r\delta_{s+1}=0$ for all $s\in \{1,\ldots, n-1\}$ and $\delta_s\neq 0$ for some $s\in \{1,\ldots, n-1\}$. Let $s=s_0$ be the minimum number s.t. $\delta_{s_0}\neq 0$. Then, $\pm1= \delta_{s_0}=\delta_{s_0+1}=\delta_{s_0+2}=\cdots=\delta_{n}$. In particular, we have
\begin{align*}
\sum_{s=1}^n\delta_s N_s=\sum_{s=s_0}^n\delta_s N_s=\delta_{s_0}\sum_{s=s_0}^n (a_{n+1-s}-a_{n+2-s})=-\delta_{s_0}a_{n+2-s_0}.
\end{align*}
Therefore, \eqref{y2} implies
\begin{align*}
|y|^2=\frac{1}{R^2}+\frac{1}{R^2}a_{n+2-s_0}^2\geq \frac{2}{R^2},
\end{align*}
where the equality holds iff $a_{n+2-s_0}=1$. 
\end{itemize}

In particular, we see
\begin{align*}
d^2={\rm min}\{|y|^2; y\in \Gamma^*_{\bf a}\setminus\{0\}\}\geq \frac{2}{R^2},
\end{align*}
and the first eigenvalue of the Laplacian on $T^n_{\bf a}=T^{n+1}_{1/\sqrt{n+1}}/S^1$ w.r.t. the flat metric is estimated by
\begin{align*}
\lambda_1=4\pi^2d^2\geq 4\pi^2\frac{2}{R^2}=\frac{2}{r^2}.
\end{align*}
By the above argument, we see that the equality holds if and only if at least two components of $(1, a_2,\ldots, a_n)$ are equal. This completes the proof of Theorem \ref{clifford}.

\begin{remark}
By Proposition \ref{monotone}, $T^n_{\bf a}$ is a (unique) monotone Lagrangian torus orbit in $(\mathbb{C}P^n_{\bf a}, \omega_c)$. It was proved in \cite{AM}, $T^n_{\bf a}$ is Hamiltonian non-displaceable in $\mathbb{C}P^n_{\bf a}$ as well as the Clifford torus in $\mathbb{C}P^n$. 
\end{remark}

\section{Generalized Lagrangian mean curvature flow}
 In the following, we always assume $(M, \omega, J)$ is a  compact K\"ahler manifold satisfying $\rho=C\omega+ndd^c f$ for some $f\in C^{\infty}(M)$. In this section, we introduce the generalized Lagrangian mean curvature flow in $M$ and show some fundamental properties of the flow. 

\subsection{Generalized Lagrangian mean curvature flow}
Let $\phi:L^n\rightarrow M^{2n}$ be a Lagrangian immersion of a compact manifold $L$ and consider a smooth deformation $F_t:L\times [0, T)\rightarrow M$ of the immersion $\phi$. In \cite{Beh}, Behrndt introduced the following initial value problem: 
\begin{align}
\begin{cases}\label{glmcf}
\dfrac{d}{dt}F_t=K=H-n(\Nab f)^\perp, \ \ t\in [0, T), \\
F_0=\phi. 
\end{cases}
\end{align}
Since this flow differs from usual mean curvature flow only by the lower oder term $-n(\Nab f)^\perp$, short time existence and uniqueness of the solution are valid if $L$ is compact. Moreover, Behrndt showed that if the initial immersion $F_0=\phi$ is Lagrangian, then $F_t$ is also Lagrangian for each $t\in [0, T)$ as long as the solution exists. On the other hand, the vector $K$ coincides with the mean curvature vector $H$ when $M$ is a K\"ahler--Einstein manifold. Thus, \eqref{glmcf} is a generalization of the Lagrangian mean curvature flow in K\"ahler--Einstein manifolds introduced by Smoczyk in \cite{Smo1} and \cite{Smo3}. From these facts, we call the flow \eqref{glmcf} \textit{Generalized Lagrangian mean curvature flow} (GLMCF for short).  By definition, stationary points of GLMCF are  $f$-minimal Lagrangians immersions. 

\subsection{Exact Lagrangian immersion}
In Subsection 2.4, we show the generalized mean curvature form $\alpha_K$ of a Lagrangian immersion is always a closed form. Therefore, there exists a function $\theta$ at least locally so that
 \begin{align*}
\alpha_K=d\theta. 
\end{align*}
If $\alpha_K$ is an exact form, then we take $\theta$ as a global smooth function on $L$, and we call $\phi:L\rightarrow M$ {\it exact} Lagrangian immersion. Moreover, the function $\theta$ will be called {\it Lagrangian angle} of $\phi$.

\begin{remark}
When $M$ is an almost Calabi-Yau manifold, the Lagrangian angle $\theta: L\rightarrow S^1$ for a Lagrangian immersion $\phi: L\rightarrow M$ is defined by
\begin{align*}
\phi^*\Omega=e^{-\sqrt{-1}\theta+nf}d\mu=e^{-\sqrt{-1}\theta}d\mu_f,
\end{align*}
where $\Omega$ is the 1-form defined by \eqref{holcf} (Note that our definition differs from \cite{Beh} up to sign).  This definition recovers our definition of Lagrangian angle (See Proposition 5.4 in \cite{Beh}), and the condition of exactness of $\alpha_K$ is called {\it zero-Maslov}. 
\end{remark}

By Proposition \ref{monotone}, the exactness of the mean curvature form is preserved under a Hamiltonian deformation $\phi_s$ of $\phi=\phi_0$.  Namely, we find a family of smooth functions $\{\theta_s\}_s$ on $L$ such that $\alpha_{K_s}=d\theta_s$ along a Hamiltonian deformation. 

 \begin{lemma} \label{lagvar}
Suppose $L_s=\phi_s(L)$ is a Hamiltonian deformation with $\alpha_{V_s}=du_s$, and $\alpha_{K_0}$ is exact. Then the Lagrangian angle $\theta_s$ satisfies 
\begin{align}
\frac{d}{ds}\theta_s=\Delta_fu_s+C u_s. \label{lagvar1}
\end{align}
\end{lemma}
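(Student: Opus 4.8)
The plan is to reduce the claim to the evolution equation \eqref{dsaK} for the generalized mean curvature form established in Lemma \ref{Ldsa}, exactly as in the proof of Proposition \ref{monotone}. First I would record that the Lagrangian angles $\theta_s$ exist for all $s$: since $\alpha_{K_0}$ is exact and $\phi_s$ is a Hamiltonian deformation, Proposition \ref{monotone}(1) gives $[\alpha_{K_s}]=[\alpha_{K_0}]=0$, so each $\alpha_{K_s}$ is exact and admits a primitive $\theta_s$ with $\alpha_{K_s}=d\theta_s$.

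Next I would substitute $\rho_f=C\omega$ from \eqref{almE} and $\alpha_{V_s}=du_s$ into \eqref{dsaK}. Using $\delta_f du_s=-\Delta_f u_s$ and $\phi_s^*(i_{V_s}\rho_f)=C\,\phi_s^*(i_{V_s}\omega)=C\,du_s$, this gives
\begin{align*}
\frac{d}{ds}\alpha_{K_s}=-d\delta_f\alpha_{V_s}+C\alpha_{V_s}=d(\Delta_f u_s+Cu_s).
\end{align*}
On the other hand, differentiating $\alpha_{K_s}=d\theta_s$ in $s$ yields $\frac{d}{ds}\alpha_{K_s}=d\big(\tfrac{d}{ds}\theta_s\big)$. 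Comparing the two expressions shows that $d\big(\tfrac{d}{ds}\theta_s-\Delta_f u_s-Cu_s\big)=0$.

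The only real subtlety, and the step I expect to require care, is the constant of integration. The identity above determines $\tfrac{d}{ds}\theta_s$ only up to a function of $s$ alone, because on (connected) $L$ a closed function is constant and the Lagrangian angle is itself defined only modulo additive constants. To obtain \eqref{lagvar1} exactly I would fix the normalization of the family $\{\theta_s\}$ by setting $\theta_s:=\theta_0+\int_0^s(\Delta_f u_\sigma+Cu_\sigma)\,d\sigma$ for a fixed primitive $\theta_0$ of $\alpha_{K_0}$. Integrating the evolution equation above then gives $d\theta_s=\alpha_{K_0}+\int_0^s\frac{d}{d\sigma}\alpha_{K_\sigma}\,d\sigma=\alpha_{K_s}$, so this $\theta_s$ is a genuine Lagrangian angle and satisfies $\tfrac{d}{ds}\theta_s=\Delta_f u_s+Cu_s$ by construction, which is \eqref{lagvar1}.
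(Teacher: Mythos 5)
Your proposal is correct and takes essentially the same route as the paper: both reduce the lemma to Lemma \ref{Ldsa} specialized to $\rho_f=C\omega$ and $\alpha_{V_s}=du_s$, the paper via the coordinate identity \eqref{dsaKF} and you via the equivalent $1$-form identity \eqref{dsaK}. The one refinement you add --- fixing the additive normalization of $\theta_s$ so that the $s$-dependent constant of integration vanishes --- is left implicit in the paper, whose gradient identity determines $\tfrac{d}{ds}\theta_s$ only up to a function of $s$ on connected $L$.
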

\begin{proof}
Since $\alpha_K=d\theta$ is written by $K^i=\nab_i\theta$ in the normal coordinate frame. Similarly, $\alpha_{V}=du$ can be written as $V^i=\nab_iu$. Combining these with the evolution equation \eqref{dsaKF}, we have 
\begin{align*}
\nab_i\Big(\frac{d}{ds}\theta\Big)&=\frac{d}{ds}K^i\\
&=\nab_i(\nab_jV^j+nV^j\nab_jf)+C V^i\\
&=\nab_i(\Delta u+n\nab_j u\nab_jf)+C\nab_iu\\
&=\nab_i(\Delta_f u+C u). 
\end{align*}
This proves the formula.
\end{proof}

Let us consider GLMCF.  By using Lemma \ref{Ldsa}, we compute the following evolution equation for the generalized mean curvature form $\alpha_K$ along GLMCF: 
\begin{align}
\frac{d}{dt}\alpha_K=-d\delta_f\alpha_K+C\alpha_K, \ \ t\in[0, T). \label{dtaK}
\end{align}
Moreover, setting $\beta_t:=e^{-C t}\alpha_{K_t}$,  we see $\beta_t$ is also a closed form, and the evolution equation \eqref{dtaK} becomes 
\begin{align}
\frac{d}{dt}\beta_t=-d\delta_f \beta_t, \ \ t\in[0, T). \label{dtbK}
\end{align}
By a similar argument of the proof of Proposition \ref{monotone}, we obtain  the following as an extension of the result in \cite{Smo2}: 
\begin{proposition}\label{exact}
The cohomology class $[\beta_t]$ does not change under the GLMCF. In particular, the exactness of $\alpha_{K_0}$ is preserved under GLMCF, that is, if $\alpha_{K_0}$ is exact, then $\alpha_{K_t}$ is also exact for each $t\in [0, T)$.
\end{proposition}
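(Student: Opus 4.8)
The plan is to read off from the evolution equation \eqref{dtbK} that $\tfrac{d}{dt}\beta_t$ is an exact $1$-form, so that the de Rham class $[\beta_t]\in H^1(L,\R)$ stays constant; the statement about exactness is then an immediate consequence. This is the same mechanism as in the proof of Proposition \ref{monotone}.

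First I would observe that, since $\beta_t\in\Omega^1(L)$ and $\delta_f$ lowers the degree by one, the quantity $\delta_f\beta_t$ is a smooth function on $L$ for each $t$; hence the right-hand side $-d\delta_f\beta_t$ of \eqref{dtbK} is manifestly exact at every time. Integrating \eqref{dtbK} in $t$ and using that the exterior derivative $d$, which acts only in the directions tangent to $L$, commutes with integration in the time variable, I would obtain
\begin{align*}
\beta_t-\beta_0=\int_0^t\frac{d}{ds}\beta_s\,ds=-\int_0^t d(\delta_f\beta_s)\,ds=d\Big(-\int_0^t\delta_f\beta_s\,ds\Big).
\end{align*}
The function $\psi_t:=-\int_0^t\delta_f\beta_s\,ds$ is smooth on $L$ by the smoothness of the flow $F_t$ together with the compactness of $L$, so $\beta_t-\beta_0=d\psi_t$ and thus $[\beta_t]=[\beta_0]$ in $H^1(L,\R)$, which proves the first assertion.

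For the exactness statement I would specialize to the case $[\alpha_{K_0}]=0$. Since $\beta_0=\alpha_{K_0}$, this gives $[\beta_0]=0$, whence $[\beta_t]=0$ and $\beta_t$ is exact for every $t$. Writing $\beta_t=dh_t$ for a smooth $h_t\in C^\infty(L)$ and recalling that $e^{C t}$ depends only on $t$, hence is constant along $L$, I conclude $\alpha_{K_t}=e^{C t}\beta_t=d(e^{C t}h_t)$, so $\alpha_{K_t}$ is exact as claimed.

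Since \eqref{dtbK} is already established, no serious obstacle remains; the only point deserving care is the interchange of $d$ with the time-integration, which is legitimate because the flow is smooth and $L$ is compact, allowing differentiation under the integral sign.
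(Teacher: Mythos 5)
Your proof is correct and takes essentially the same approach as the paper, which establishes Proposition \ref{exact} ``by a similar argument of the proof of Proposition \ref{monotone}'': one integrates the evolution equation \eqref{dtbK} in time, observes that the right-hand side $-d\delta_f\beta_s$ is exact so that $\beta_t-\beta_0$ is the differential of a smooth function, and then multiplies by the spatial constant $e^{Ct}$ to pass from exactness of $\beta_t$ to exactness of $\alpha_{K_t}$. Your remark on interchanging $d$ with the time integral is a harmless technical refinement, not a departure from the paper's argument.
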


We call a solution preserving the exactness of $F_t$ an \textit{exact solution} of GLMCF. Note that an exact solution to GLMCF generates a Hamiltonian deformation, although it is not true in general. Therefore, it is natural to ask whether the GLMCF of exact Lagrangian immersion converges to a $f$-minimal and Hamiltonian $f$-stable Lagrangian immersion. In the following sections, we shall consider this problem. 

At the end of this section, we give an evolutional equation of the Lagrangian angle for an exact solution: 

\begin{corollary}
For an exact solution to GLMCF, there exists a Lagrangian angle $\theta_t\in C^\infty(L_t)$ and it satisfies 
\begin{align}
\frac{d\theta_t}{dt}=\Delta_f\theta_t+C\theta_t, \ \ t\in[0, T).  \label{angle}
\end{align}
\end{corollary}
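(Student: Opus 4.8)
The plan is to substitute the exactness ansatz $\alpha_{K_t}=d\theta_t$ into the evolution equation \eqref{dtaK} for the generalized mean curvature form, and then descend from an identity between (closed) $1$-forms to an identity between functions. By Proposition \ref{exact}, exactness of $\alpha_{K_0}$ is preserved along the flow, so a smooth family of Lagrangian angles $\theta_t$ with $\alpha_{K_t}=d\theta_t$ exists for all $t\in[0,T)$; the only ambiguity is a time-dependent additive constant, which I will fix at the end.

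First I would rewrite the right-hand side of \eqref{dtaK}. Since $\Delta_f u=-\delta_f du$ by definition, we have $\delta_f\alpha_{K_t}=\delta_f d\theta_t=-\Delta_f\theta_t$, hence
\begin{align*}
-d\delta_f\alpha_{K_t}+C\alpha_{K_t}=d(\Delta_f\theta_t)+C\,d\theta_t=d\left(\Delta_f\theta_t+C\theta_t\right).
\end{align*}
For the left-hand side I would use that the exterior derivative $d$ on the fixed manifold $L$ is independent of the (time-varying) induced metric, so it commutes with $d/dt$; thus $\frac{d}{dt}\alpha_{K_t}=\frac{d}{dt}\,d\theta_t=d\left(\frac{d\theta_t}{dt}\right)$, consistently with the convention that $(d/dt)\alpha_K$ is the $1$-form $X\mapsto \frac{d}{dt}(\alpha_K(X))$. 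Combining the two sides, \eqref{dtaK} becomes
\begin{align*}
d\left(\frac{d\theta_t}{dt}-\Delta_f\theta_t-C\theta_t\right)=0,
\end{align*}
so the bracketed quantity is spatially constant, equal to some smooth function $c(t)$ of $t$ alone (using that $L$ is connected; otherwise I argue componentwise).

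The main obstacle is then purely the normalization of $c(t)$, which reflects the fact that $\theta_t$ is determined only up to an additive constant by $\alpha_{K_t}=d\theta_t$. To remove it I would replace $\theta_t$ by $\theta_t+b(t)$, where $b$ solves the linear ODE $b'(t)=Cb(t)-c(t)$ with $b(0)=0$; since $\Delta_f$ annihilates the spatially constant $b(t)$ and $C(\theta_t+b)=C\theta_t+Cb$, this shift absorbs $c(t)$ and yields exactly \eqref{angle}, while preserving $d(\theta_t+b(t))=\alpha_{K_t}$. Alternatively, and more briefly, I would note that \eqref{angle} is a direct specialization of Lemma \ref{lagvar}: an exact solution of GLMCF is a Hamiltonian deformation whose variational field $V_t=K_t$ has Hamiltonian $u_t=\theta_t$, because $\alpha_{V_t}=\alpha_{K_t}=d\theta_t$; substituting $u_s=\theta_t$ into \eqref{lagvar1} produces the evolution equation at once, with the same constant ambiguity resolved by the same normalization.
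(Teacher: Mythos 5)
Your proposal is correct and takes essentially the same route as the paper: the paper's proof is precisely your "alternative" argument (Proposition \ref{exact} gives preserved exactness, then Lemma \ref{lagvar} applied with $u_t=\theta_t$ since the variational field of GLMCF is $V_t=K_t$), and your primary computation via \eqref{dtaK} is the same argument unrolled, as both \eqref{dtaK} and Lemma \ref{lagvar} are specializations of Lemma \ref{Ldsa}. Your explicit treatment of the time-dependent additive constant --- renormalizing $\theta_t$ by $b(t)$ with $b'=Cb-c$, $b(0)=0$ --- addresses a point the paper passes over silently, and is a sound refinement rather than a deviation.
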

\begin{proof}
This is a direct consequence of Lemma \ref{lagvar} and Lemma \ref{exact}.  
\end{proof}

\section{Estimates under GLMCF}
This section is a preliminary to show the long-time existence and convergence results. We compute some evolution equations for fundamental quantities along the flow. 
\subsection{Weighted $L^2$ estimate}
First, we derive the evolution equation for $\int_{L_t}|K|^2\dmu_f$ under GLMCF. This is actually the (negative) second variation formula of $\Vol_f(F_t)$ with the variation direction of $V=K$ since, by the first variation formula, we know 
\begin{align*}
\frac{d}{dt}\Vol_{f}(F_t)=-\int_{L_t}|K|^2\dmu_f.  
\end{align*}

\begin{lemma} \label{dL2}
If $L_t$ is an exact solution to GLMCF with $\alpha_{K_t}=d\theta_t$ for $\theta_t\in C^\infty(L_t)$,  then we have
\begin{align}
\frac{d}{dt}\int_{L_t}|K|^2\dmu_f&=\int_{L_t}\big\{-2|\Delta_f\theta|^2+2C|K|^2+2g(B(JK, JK), K)-|K|^4\big\}\dmu_f\nonumber \\ 
&\leq 2\Big(C+\max_{L_t}|B||K|-\lambda_1(t)\Big)\int_{L_t}|K|^2\dmu_f. \label{L2e1}
\end{align}
In particular, it follows 
\begin{align}
\int_{L_t}|K|^2\dmu_f\leq e^{\int_0^t{2(C+\max_{L_{s}}(|K||B|)-\lambda_{1}(s))}ds} \int_{L_0}|K|^2\dmu_f.   \label{L2e3}
\end{align}
\end{lemma}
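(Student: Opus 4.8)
The plan is to differentiate $\int_{L_t}|K|^2\,d\mu_f$ directly, writing $|K|^2=|\alpha_K|^2$ (using that $J$ is a $g$-isometry and $\alpha_K^\sharp=JK$) and exploiting the closed-form evolution equation \eqref{dtaK} together with the first variation of the weighted measure. First I would record that along the flow the weighted area element satisfies $\tfrac{d}{dt}\,d\mu_f=-g(K,V)\,d\mu_f=-|K|^2\,d\mu_f$, which is the pointwise form of the first variation formula \eqref{firstv} with $V=K$. Hence
\begin{align*}
\frac{d}{dt}\int_{L_t}|K|^2\,d\mu_f=\int_{L_t}\Big(\frac{d}{dt}|K|^2\Big)\,d\mu_f-\int_{L_t}|K|^4\,d\mu_f,
\end{align*}
which already isolates the $-|K|^4$ term.

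For the first integrand I would compute $\tfrac{d}{dt}|K|^2=\tfrac{d}{dt}\big(g^{ij}K^iK^j\big)$ with $K^i=\alpha_K(e_i)$. The variation of the inverse metric contributes $\big(\tfrac{d}{dt}g^{ij}\big)K^iK^j=2g(B(JK,JK),K)$, exactly as in \eqref{first} with $V=K$. For the remaining part $2g^{ij}\big(\tfrac{d}{dt}K^i\big)K^j$ I use exactness: since $\alpha_K=d\theta$, the evolution \eqref{dtaK} reads $\tfrac{d}{dt}\alpha_K=-d\delta_f\alpha_K+C\alpha_K=d(\Delta_f\theta+C\theta)$, so $\tfrac{d}{dt}K^i=\nab_i(\Delta_f\theta+C\theta)$ and this part equals $2g\big(\nab(\Delta_f\theta+C\theta),\nab\theta\big)$. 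Assembling these and integrating by parts with \eqref{stokes}, writing $\int_{L_t}g(\nab\Delta_f\theta,\nab\theta)\,d\mu_f=-\int_{L_t}|\Delta_f\theta|^2\,d\mu_f$ and keeping the term $2C\int_{L_t}|\nab\theta|^2\,d\mu_f=2C\int_{L_t}|K|^2\,d\mu_f$ unchanged, yields precisely the claimed equality.

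The inequality then follows from three elementary estimates on the resulting integrand. The crucial analytic input is the spectral bound $\int_{L_t}|\Delta_f\theta|^2\,d\mu_f\ge \lambda_1(t)\int_{L_t}|\nab\theta|^2\,d\mu_f=\lambda_1(t)\int_{L_t}|K|^2\,d\mu_f$, obtained by expanding $\theta$ (modulo an additive constant, which affects neither $\nab\theta$ nor $\Delta_f\theta$) in eigenfunctions of the self-adjoint operator $\Delta_f$ on $(L_t,d\mu_f)$ and using $\lambda_k^2\ge\lambda_1\lambda_k$ for $k\ge1$. The second-fundamental-form term is controlled by Cauchy--Schwarz, $2g(B(JK,JK),K)\le 2|B|\,|K|^3\le 2\max_{L_t}(|B||K|)\,|K|^2$, and the quartic term is discarded since $-|K|^4\le0$. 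Combining the three gives \eqref{L2e1}.

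Finally, setting $\Phi(t):=\int_{L_t}|K|^2\,d\mu_f$, the differential inequality $\Phi'(t)\le 2\big(C+\max_{L_t}(|B||K|)-\lambda_1(t)\big)\Phi(t)$ integrates by Gronwall's lemma to \eqref{L2e3}. The genuinely delicate feature is not any single computation but the fact that the second-fundamental-form term $2g(B(JK,JK),K)$ is sign-indefinite and can only be absorbed at the cost of the factor $\max_{L_t}(|B||K|)$; supplying a uniform control of this quantity is exactly the role of the higher-order estimates in the subsequent sections, which will convert \eqref{L2e3} into a genuine exponential decay estimate.
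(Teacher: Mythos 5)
Your proof is correct and follows essentially the same route as the paper's: the paper obtains the equality by citing its second variation machinery (\eqref{var1}, \eqref{var2}, \eqref{first}, \eqref{second}) with $V=K$, which amounts to exactly your combination of the first variation of $d\mu_f$, the variation of the inverse metric, and the evolution equation \eqref{dtaK}, followed by integration by parts via \eqref{stokes}. Your eigenfunction-expansion justification of $\int|\Delta_f\theta|^2\,d\mu_f\ge\lambda_1\int|\nabla\theta|^2\,d\mu_f$ and the Cauchy--Schwarz/Gronwall steps match the paper's (tersely stated) argument for the inequality and for \eqref{L2e3}.
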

\begin{proof}
The first equality in \eqref{L2e1} follows from \eqref{var1}, \eqref{var2}, \eqref{first} and \eqref{second} using $K$ as the variation vector field. 

As for the inequality in \eqref{L2e1}, we use the relation 
\begin{align*}
|\Delta_f\theta|^2\geq \lambda_1 |\nab \theta|^2=\lambda_1|K|^2, 
\end{align*}
since exactness is equivalent to $\nab \theta=JK$. 
\end{proof}

\subsection{First eigenvalue estimates}
From the estimate \eqref{L2e3} for the weighted $L^2$ norm $\int_{L_t}|K|^2\dmu_f$, we expect that the flow converges exponentially fast to an $f$-minimal if we have a good control of $|B|, |K|$ and $\lambda_1(t)$ to keep 
\begin{align*}
C+\max_{L_{t}}(|K||B|)-\lambda_{1}(t)
\end{align*}
negative along the flow. Thus, we derive the first eigenvalue estimate here. Let $\varphi(x, t)\in C^\infty(L_t)$ be an eigenfunction which satisfies 
\begin{align}
\Delta_f \varphi+\lambda_{1}(t)\varphi=0, \ \ \ \int_{L_t}\varphi^2\dmu_f=1. \label{ef1}
\end{align} 

\begin{lemma}\label{ef}
Along GLMCF, the first eigenvalue  $\lambda_{1}(t)$ of the weighted Laplacian $\Delta_f$ satisfies 
\begin{align}
\frac{d}{dt}\lambda_{1}(t)&=\int_{L_t}\Big(2g(B(\nab \varphi, \nab \varphi), K)-|K|^2\big(|\nab \varphi|^2+\varphi(\Delta_f \varphi)\big)\Big)\dmu_f \nonumber \\
&\geq -c(n)\max_{L_t}(|B||K|+|K|^2)\lambda_{1}(t), \label{ef2}
\end{align}
for some dimensional constant $c(n)>0$. In particular, we have 
\begin{align}
\lambda_{1}(t)\geq e^{-E(t)}\lambda_{1}(0),   \label{eff}
\end{align}
where $E(t):=c(n)\int_0^t \max_{L_{s}}(|B||K|+|K|^2)ds>0$. Note that $E(t)$ may vary up to the dimensional constant $c(n)$. 
\end{lemma}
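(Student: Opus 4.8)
The plan is to treat $\lambda_1(t)$ through its Rayleigh-quotient characterization and differentiate, exploiting that the first-order variation of the eigenfunction makes no net contribution. Assuming, as is standard when $\lambda_1$ is simple, that one may select a smooth family $\varphi=\varphi(x,t)$ of normalized eigenfunctions solving \eqref{ef1}, I would first record from the eigenvalue equation and the integration-by-parts identity \eqref{stokes} that
\begin{align*}
\lambda_1(t)=-\int_{L_t}\varphi(\Delta_f\varphi)\dmu_f=\int_{L_t}|\nab\varphi|^2\dmu_f.
\end{align*}
Differentiating in $t$ there are three sources of dependence: the eigenfunction $\varphi$, the inverse metric in $|\nab\varphi|^2=g^{ij}\p_i\varphi\,\p_j\varphi$, and the weighted measure $\dmu_f$. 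The crucial point is that all terms carrying $\dot\varphi:=\p_t\varphi$ cancel: the contribution $2\int_{L_t}g(\nab\dot\varphi,\nab\varphi)\dmu_f$, after integrating by parts via \eqref{stokes} and inserting $\Delta_f\varphi=-\lambda_1\varphi$, equals $2\lambda_1\int_{L_t}\varphi\dot\varphi\,\dmu_f$, which is exactly annihilated by the $\dot\varphi$-part of the differentiated normalization $\int_{L_t}\varphi^2\dmu_f=1$.

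After this cancellation only the geometric variations survive. For the measure I would use that along GLMCF the variation field is $V=K$, so the first variation formula \eqref{firstv} yields $\tfrac{d}{dt}\dmu_f=-|K|^2\dmu_f$. For the inverse metric I would invoke the evolution $\tfrac{d}{dt}g^{ij}=2g^{ip}(K^a h^a_{pq})g^{qj}$ (the metric-variation lemma preceding \eqref{first}, specialized to $V=K$) and contract with $\p_i\varphi\,\p_j\varphi$ to get $2K^a h^a_{pq}(\nab\varphi)^p(\nab\varphi)^q$. Using $h^a_{pq}=g(JB(e_p,e_q),e_a)$ and $K^a=g(JK,e_a)$, expanding $B(e_p,e_q)$ in the normal frame $\{Je_a\}$ gives the pointwise identity $\sum_a h^a_{pq}K^a=g(B(e_p,e_q),K)$, so that
\begin{align*}
\Big(\tfrac{d}{dt}g^{ij}\Big)\p_i\varphi\,\p_j\varphi=2g(B(\nab\varphi,\nab\varphi),K).
\end{align*}
Collecting the metric and measure contributions and re-inserting $\Delta_f\varphi=-\lambda_1\varphi$ produces exactly the stated formula for $\tfrac{d}{dt}\lambda_1(t)$.

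For the differential inequality I would estimate the three integrands separately: $|2g(B(\nab\varphi,\nab\varphi),K)|\le 2|B||K||\nab\varphi|^2$; the term $-|K|^2|\nab\varphi|^2$ is controlled by $\max_{L_t}|K|^2\cdot|\nab\varphi|^2$; while the remaining piece $-|K|^2\varphi(\Delta_f\varphi)=\lambda_1|K|^2\varphi^2\ge 0$ only helps and may be dropped. Factoring out $\max_{L_t}(|B||K|+|K|^2)$ and using $\int_{L_t}|\nab\varphi|^2\dmu_f=\lambda_1(t)$ gives $\tfrac{d}{dt}\lambda_1(t)\ge -c(n)\max_{L_t}(|B||K|+|K|^2)\,\lambda_1(t)$. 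Since $\lambda_1(0)>0$ on the compact $L$, this inequality keeps $\lambda_1$ positive, so I may rewrite it as $\tfrac{d}{dt}\log\lambda_1(t)\ge -c(n)\max_{L_t}(|B||K|+|K|^2)$ and integrate from $0$ to $t$ by Gronwall's lemma to obtain $\lambda_1(t)\ge e^{-E(t)}\lambda_1(0)$ with $E(t)=c(n)\int_0^t\max_{L_s}(|B||K|+|K|^2)\,ds$, as claimed.

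The main obstacle is not any single calculation but the smooth-dependence issue underlying the whole scheme: the differentiation of $\lambda_1(t)$ and the selection of the smooth normalized family $\varphi(x,t)$ are only immediate when $\lambda_1$ is simple. In the general (higher-multiplicity) case one must argue more carefully, either via analytic perturbation theory for the isolated eigenvalue or by interpreting the differential inequality through the lower Dini derivative of the Lipschitz function $\lambda_1(t)$; the Gronwall step still produces the integral bound \eqref{eff}. The $\dot\varphi$-cancellation and the identification $\big(\tfrac{d}{dt}g^{ij}\big)\p_i\varphi\,\p_j\varphi=2g(B(\nab\varphi,\nab\varphi),K)$ are then the substantive but essentially routine tensorial computations.
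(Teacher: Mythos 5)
Your proposal is correct, and it reaches the identity in \eqref{ef2} by a genuinely different route from the paper. The paper differentiates $\lambda_1(t)=-\int_{L_t}\varphi(\Delta_f\varphi)\dmu_f$ and therefore must first establish the commutation formula \eqref{commf} for $\frac{d}{dt}(\Delta_f\varphi)-\Delta_f\big(\frac{d\varphi}{dt}\big)$ along the flow, which requires computing the evolution of the Christoffel symbols and of the weight term $ng(\nab f,\nab\varphi)$, and then unwinding everything with $\delta_f$-identities and \eqref{stokes}. You instead differentiate the Dirichlet-energy form $\lambda_1(t)=\int_{L_t}|\nab\varphi|^2\dmu_f$ (the paper's \eqref{ef4}), so only first derivatives of $\varphi$ appear and the only geometric inputs are the evolution of $g^{ij}$ (Oh's metric-variation lemma, already quoted in Section 2, specialized to $V=K$) and $\frac{d}{dt}\dmu_f=-|K|^2\dmu_f$; the commutator never enters. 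This is cleaner, and your observation that the eigenfunction variation is controlled by criticality of the Rayleigh quotient is exactly what makes it work. One caveat on wording: the $\dot\varphi$-terms do not vanish — via \eqref{stokes}, the eigenvalue equation \eqref{ef1} and the differentiated normalization \eqref{ef3} they are converted into $\lambda_1\int_{L_t}\varphi^2|K|^2\dmu_f=-\int_{L_t}\varphi(\Delta_f\varphi)|K|^2\dmu_f$, which is precisely the third term of \eqref{ef2}; your later step of ``re-inserting $\Delta_f\varphi=-\lambda_1\varphi$'' shows you kept this term, so this is a presentational slip, not a gap (and even if one dropped it, being nonnegative it would not harm the inequality or \eqref{eff}). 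Finally, your remark on simplicity of $\lambda_1$ and the need for perturbation theory or Dini derivatives to justify differentiating the eigenvalue is a point the paper's proof passes over silently; your treatment is, if anything, the more careful one, and the Gronwall step is identical in both arguments.
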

In the following, we prove Lemma \ref{ef}. First, taking time derivative of \eqref{ef1}, we have 
\begin{align}
\int_{L_t}\Big(2\varphi\frac{d\varphi}{dt}-\varphi^2|K|^2\Big)\dmu_f=0. \label{ef3}
\end{align}
By \eqref{ef1}, it is clear that the first eigenvalue satisfies 
\begin{align}
\lambda_{1}(t)=\int_{L_t}|\nab \varphi|^2\dmu_f. \label{ef4}
\end{align}
In order to compute the time derivative of the eigenvalue, we need the following commutation formula for the weighed Laplacian: 
\begin{claim}
Along GLMCF, we have
\begin{align}
\frac{d}{dt}\Big(\Delta_f \varphi\Big)-\Delta_f\Big(\frac{d\varphi}{dt}\Big)=-\delta_fX-\nab_i|K|^2\nab_i\varphi, \label{commf} 
\end{align}
where $X_i=2K^mh^m_{ij}\nab_j\varphi$ is a $1$-form on $L$. 
\end{claim}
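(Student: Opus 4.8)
The plan is to work pointwise at a fixed $p\in L$ in a frame that is $g_t$-orthonormal and $\nab$-normal at $(p,t)$, and to commute $d/dt$ through the splitting $\Delta_f\varphi=\Delta\varphi+n\,g(\nab f,\nab\varphi)$, handling the unweighted Laplacian and the weighting term separately. Throughout, the variation vector field is $K$, so the evolution of the induced metric is governed by the lemma of Oh quoted above (Lemma 3.2 of \cite{Oh2}) with $V$ replaced by $K$: $\tfrac{d}{dt}g_{ij}=-2K^m h^m_{ij}$ and $\tfrac{d}{dt}g^{ij}=2K^m h^m_{ij}$ at $p$.

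For the unweighted part I would first record the standard commutation identity
\[
\frac{d}{dt}(\Delta\varphi)-\Delta\Big(\frac{d\varphi}{dt}\Big)=\Big(\frac{d}{dt}g^{ij}\Big)\nab_i\nab_j\varphi-g^{ij}\Big(\frac{d}{dt}\Gamma^k_{ij}\Big)\nab_k\varphi,
\]
which follows because $\nab_i\nab_j(d\varphi/dt)=\tfrac{d}{dt}(\nab_i\nab_j\varphi)+(\tfrac{d}{dt}\Gamma^k_{ij})\nab_k\varphi$. Substituting $\tfrac{d}{dt}g^{ij}=2K^mh^m_{ij}$ together with the variation-of-connection formula $\tfrac{d}{dt}\Gamma^k_{ij}=-g^{kl}\big(\nab_i(K^mh^m_{jl})+\nab_j(K^mh^m_{il})-\nab_l(K^mh^m_{ij})\big)$ and tracing, the first term becomes $2K^mh^m_{ij}\nab_i\nab_j\varphi$ and the Christoffel term yields $2\nab_i(K^mh^m_{ij})\nab_j\varphi-\nab_j(K^mH^m)\nab_j\varphi$. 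The first two of these assemble into $\nab^iX_i$ for $X_i=2K^mh^m_{ij}\nab_j\varphi$, and the leftover trace term $-\nab_j(K^mH^m)\nab_j\varphi$ is set aside.

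Next I would differentiate the weighting $n\,g(\nab f,\nab\varphi)=n g^{ij}\nab_i f\,\nab_j\varphi$, producing three contributions: the $\tfrac{d}{dt}g^{ij}$ term gives $2nK^mh^m_{ij}\nab_if\,\nab_j\varphi$, which combines with $\nab^iX_i$ to supply the weighting in $-\delta_fX=\nab^iX_i+n\,g(df,X)$; the term in $\nab_j(d\varphi/dt)$ equals $n\,g(\nab f,\nab(d\varphi/dt))$ and cancels the corresponding piece of $\Delta_f(d\varphi/dt)$; and there remains the term $n\,(\tfrac{d}{dt}\nab_j f)\,\nab_j\varphi$.

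The step I expect to be the crux is this last term, the time derivative of the pulled-back gradient. Since $\tfrac{d}{dt}(f\circ F_t)=df(K)=g(\Nab f,K)$ and the coordinate derivatives in $x$ and $t$ commute, one has $\tfrac{d}{dt}\nab_j f=\nab_j\big(g(\Nab f,K)\big)$; using that $L$ is Lagrangian, so that $K=-K^mJe_m$ at $p$, this identifies $g(\Nab f,K)=K^m\nap_m f$. The decisive algebraic fact is then $K^mH^m=|K|^2+nK^m\nap_m f$, which is nothing but the definition $K=H-n(\Nab f)^\perp$ read off componentwise as $H^m=K^m+n\nap_m f$. Feeding this in, the leftover trace term together with $n\,\tfrac{d}{dt}\nab_j f$ collapses to exactly $-\nab_j|K|^2\,\nab_j\varphi$, and collecting everything gives $-\delta_fX-\nab_i|K|^2\nab_i\varphi$. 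The only real danger here is bookkeeping: keeping the normal-frame conventions consistent and not confusing the tangential gradient $\nab f$ with the quantity $\nap f$.
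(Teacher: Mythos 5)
Your proposal is correct and follows essentially the same route as the paper: both split $\Delta_f\varphi=\Delta\varphi+n\,g(\nab f,\nab\varphi)$, use the metric/Christoffel variation (Oh's lemma with $V=K$) to get the unweighted commutation formula, assemble $-\delta_fX$ from the trace and weighting terms, and collapse the leftover $-\nab_i(K^mH^m)\nab_i\varphi$ against $n\nab_i(df/dt)\nab_i\varphi$ via $df/dt=g(\Nab f,K)$ and the componentwise identity $H^m=K^m+n\nap_mf$. The only difference is cosmetic: you derive the commutation identity from the variation of the connection, whereas the paper simply quotes it.
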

\begin{proof}
We compute in the normal coordinate frame. For a time dependent function $\varphi(t)\in C^\infty(L_t)$, we know the following commutation formula under the generalized Lagrangian mean curvature flow: 
\begin{align}
\frac{d}{dt}\Big(\Delta \varphi\Big)-\Delta\Big(\frac{d}{dt}\varphi\Big)&=\Big(\frac{d}{dt}g^{ij}\Big)\nab_i\nab_j\varphi-g^{ij}\Big(\frac{d}{dt}\Gamma^k_{ij}\Big)\nab_k\varphi\label{comml}\\
&=2K^m h^m_{ij}\nab_i\nab_j \varphi+2\nab_i(K^m h^m_{ik})\nab_k\varphi-\nab_k(K^m H^m)\nab_k\varphi. \nonumber
\end{align}
Recall the definition of the weighted Laplacian $\Delta_f\varphi=\Delta \varphi+ng(\nab f, \nab \varphi)$ and take the time derivative of it. Then, by the commutation formula \eqref{comml} for usual Laplacian $\Delta$,  we have 
\begin{align}
\frac{d}{dt}\big(\Delta_f \varphi\big)=&\ \frac{d}{dt}\big(\Delta \varphi+ng(\nab f, \nab \varphi)\big)\nonumber\\ 
=&\ \Delta\Big(\frac{d\varphi}{dt}\Big)+2K^m h^m_{ij}\nab_i\nab_j \varphi+2\nab_i(K^m h^m_{ik})\nab_k\varphi-\nab_i(K^m H^m)\nab_i\varphi\label{com1}\\
&+2nK^mh^m_{ij}\nab_if\nab_j\varphi+n\nab_i\Big(\frac{df}{dt}\Big)\nab_i\varphi
+n\nab_if\nab_i\Big(\frac{d\varphi}{dt}\Big)\nonumber
\end{align}
Observe that $\Delta_f(d\varphi/dt)$ comes from the first and the last term in \eqref{com1}. Combining the second, third and fifth terms in \eqref{com1}, we obtain $-\delta_fX_i$. Note that the time derivative of $f$ becomes 
\begin{align*}
n\frac{df}{dt}=ndf(K)=ng(\Nab f, K)=g((\Nab f)^\perp, K). 
\end{align*}
Then the fourth and sixth terms become $-\nab_i|K|^2\nab_i \varphi$. This proves \eqref{commf}. 
\end{proof}
Now, we are ready to compute the time derivative of the first eigenvalue. Using the commutation formula \eqref{commf} and  \eqref{stokes}, we see 
\begin{align*}
\frac{d}{dt}\lambda_{1}(t)&=-\frac{d}{dt}\int_{L_t}\varphi(\Delta_f\varphi)\dmu_f\\ 
&=\int_{L_t}\Big\{-\frac{d\varphi}{dt}(\Delta_f\varphi)-\varphi\Delta_f\Big(\frac{d\varphi}{dt}\Big)+\varphi(\delta_fX)+\nab_i|K|^2\varphi\nab_i\varphi+\varphi(\Delta_f\varphi)|K|^2\Big\}\dmu_f\\
&=\int_{L_t}\Big\{-2\frac{d\varphi}{dt}(\Delta_f\varphi)+\varphi(\delta_fX)+\frac{1}{2}\nab_i|K|^2\nab_i\varphi^2+\varphi(\Delta_f\varphi)|K|^2\Big\}\dmu_f\\
&=\int_{L_t}\Big\{\lambda_{1}\Big(2\varphi\frac{d\varphi}{dt}-\varphi^2|K|^2\Big)+\varphi(\delta_fX)+\frac{1}{2}\nab_i|K|^2\nab_i\varphi^2\Big\}\dmu_f\\
&=\int_{L_t}\Big\{\varphi(\delta_fX)-\frac{1}{2}|K|^2(\Delta_f\varphi^2)\Big\}\dmu_f. 
\end{align*}
We have used the relation \eqref{ef3} for the last equality. Note that for the weighted Laplacian we see 
\begin{align*}
\Delta_f(\varphi^2)=2\varphi(\Delta_f \varphi)+2|\nab \varphi|^2. 
\end{align*}s
On the other hand, for the weighted co-differential $\delta_f$, we have 
\begin{align*}
\delta_f(\varphi X)=\varphi \delta_f X-g(d\varphi, X)=\varphi (\delta_f X)-2K^mh^m_{ij}\nab_i\varphi \nab_j\varphi. 
\end{align*}
Therefore, again by Lemma \ref{stokes}, we obtain the equality in \eqref{ef2}. The inequality part is easily follows from the relations \eqref{ef1} and \eqref{ef4}, then we complete the proof of Lemma \ref{ef}. 

\subsection{Zero and Higher order estimates}\label{smest}
In the following, we need pointwise estimates to show the convergence of GLMCF. Generalizing Li's computation in \cite{Li}, we list some estimates. Results in Section 3.3 and 3.4 of \cite{Li} for LMCF in K\"ahler-Einstein manifolds remain true in our case with slight modifications by a function $f\in C^\infty(M)$. 

First, we consider the higher order estimates for the second fundamental form $B$. In order to do so, we need the evolution equation of $|B|^2$. 
Using the $*$-notation, a similar computation to \cite{Wan} (see also \cite{SuY}) shows 
\begin{align}
\frac{d}{dt}B=&\ \Delta B+B*B*B+B*B*\Nab f \\
&+\oRm*B+\oRm*\Nab f+\Nab^3f+\Nab^2f*B+\Nab f*\nab B+\Nab \oRm.   \nonumber
\end{align}
More generally, by induction, we have the following general result (see \cite{HaS}, \cite{Smo4} and \cite{SuY} for the proof): 
\begin{lemma}\label{hderiv}
Let $L_t=F_t(L)$ be a compact GLMCF. Then along the flow, we have 
\begin{align*}
\frac{d}{dt}(\nab^mB)=&\ \Delta(\nab^m B)+\sum_{i+j+k=m}\nab^iB*\nab^jB*\nab^kB\\
&+\sum_{j=0}^m\sum_{i_1+\cdots+i_r+k=m+1-j}\Nab^j\oRm*\nab^{i_1-1}B*\cdots*\nab^{i_r-1}B*\Nab^kf\\
&+\sum_{k=0}^{m+2}\sum_{i_1+\cdots+i_r=m+2-k}\nab^{i_1-1}B*\cdots*\nab^{i_r-1}B*\Nab^{k+1}f\\  
&+\Nab^{m+1}\oRm. 
\end{align*}
Moreover, 
\begin{align}
\frac{d}{dt}|\nab^m B|^2\leq&\  \Delta|\nab^m B|^2-|\nab^{m+1}B|^2+c\sum_{i+j+k=m}|\nab^iB||\nab^jB||\nab^kB||\nab^m B|\label{horder1}\\
&+c_{m+1, m+3}\sum_{j=0}^{m+2}\sum_{\substack{i_1+\cdots+i_r=m+2-j\\i_1, \cdots, i_r<m+2}}|\nab^{i_1-1}B|\cdots |\nab^{i_r-1}B||\nab^m B|,  \nonumber
\end{align}
where $c$ is a constant, and $c_{k, r}$ is a constant depending only on 
\begin{align*}
\oR_k:=\sum_{i=0}^{k}\sup_{M}|\Nab^i\oRm|, \ \ f_r:=||f||_{C^7}(M)=\sum_{i=0}^{r} \sup_{M}|\Nab^if|. 
\end{align*}
\end{lemma}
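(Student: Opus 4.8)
The plan is to prove both displays by induction on $m$, taking the evolution equation for $B$ stated immediately above the lemma as the base case $m=0$. That base equation itself comes from the usual Simons-type computation for mean curvature flow (compare \cite{Wan}, \cite{SuY}) applied to the velocity $K=H-n(\Nab f)^\perp$ rather than $H$: differentiating the correction $-n(\Nab f)^\perp$ produces exactly the extra terms $\oRm*\Nab f$, $\Nab^3f$, $\Nab^2f*B$ and $\Nab f*\nab B$, all of which are of lower order in $B$ and enter linearly, so the parabolic structure is unchanged.

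For the induction step I would pass from $\nab^mB$ to $\nab^{m+1}B$ by applying one more covariant derivative and commuting $\p/\p t$ with $\nab$. The two commutation inputs are, schematically,
\[
\frac{d}{dt}(\nab T)-\nab\Big(\frac{d}{dt}T\Big)=-\Big(\frac{d}{dt}\Gamma^k_{ij}\Big)*T,\qquad \frac{d}{dt}\Gamma^k_{ij}\sim\nab(K*B)\sim\nab B*B+\Nab^2f*B+\Nab f*\nab B,
\]
the latter obtained by differentiating $\frac{d}{dt}g_{ij}=-2K^\ell h^\ell_{ij}$ (the metric variation with $V=K$) through Koszul's formula, together with the interchange $\nab\Delta=\Delta\nab+(\text{Gauss/}\oRm)*\nab$. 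Applying the first identity to $T=\nab^mB$ and to each curvature and Hessian factor appearing on the right of the inductive hypothesis, and then collecting terms in the $*$-notation, reproduces the three schematic sums. The only genuine content is tracking the total derivative order, which is conserved and distributes among the factors precisely as recorded by the constraints $i+j+k=m$, $\sum_\ell i_\ell+k=m+1-j$ and $\sum_\ell i_\ell=m+2-k$; since this is the standard derivative-estimate induction of \cite{HaS}, \cite{Smo4}, \cite{SuY} carried out with the extra factors $\Nab^kf$ treated as fixed background data, I would present it schematically rather than writing out contractions.

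To obtain \eqref{horder1} I would begin from
\[
\frac{d}{dt}|\nab^mB|^2=2\la \nab^mB,\tfrac{d}{dt}\nab^mB\ra+\Big(\tfrac{d}{dt}g^{\ast\ast}\Big)*\nab^mB*\nab^mB,
\]
where the metric-variation term equals $(B*B+B*\Nab f)*\nab^mB*\nab^mB$ and is absorbed into the two schematic sums. The pointwise Bochner identity $2\la \nab^mB,\Delta\nab^mB\ra=\Delta|\nab^mB|^2-2|\nab^{m+1}B|^2$ converts the Laplacian contribution into $\Delta|\nab^mB|^2-2|\nab^{m+1}B|^2$. The unique top-order term, of order $m+1$, is the $r=1,k=0$ piece $\nab^{m+1}B*\Nab f$ coming from the third sum; pairing it with $\nab^mB$ and applying Young's inequality $|\nab^{m+1}B||\Nab f||\nab^mB|\le|\nab^{m+1}B|^2+Cf_{m+3}^2|\nab^mB|^2$ absorbs one copy of $|\nab^{m+1}B|^2$ into the negative Bochner term (leaving coefficient $-1$) and throws the remainder into the final sum, which is exactly why that sum carries the restriction $i_1,\dots,i_r<m+2$. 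Bounding the remaining curvature and Hessian factors by $\oR_{m+1}$ and $f_{m+3}$ and naming the universal combinatorial constants $c$ and $c_{m+1,m+3}$ then yields \eqref{horder1}.

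The main obstacle will be the induction bookkeeping in the second step: one must check that commuting $\p/\p t$ through the $m$ covariant derivatives of $B$, together with the $\nab$--$\Delta$ commutators, reproduces precisely the stated index constraints and generates no stray term of the wrong order. The $f$-dependence adds no analytic difficulty, since it merely enlarges the list of background tensors $\Nab^kf$; the entire substance is the degree-counting, which is routine but error-prone.
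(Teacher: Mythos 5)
Your proposal is correct and takes essentially the same route as the paper, which states the $m=0$ evolution equation and then invokes exactly this standard derivative-estimate induction (citing \cite{HaS}, \cite{Smo4}, \cite{SuY}): commute $d/dt$ with $\nab$ via the variation of the Christoffel symbols, use the Bochner identity for the Laplacian term, and absorb the single top-order term $\nab^{m+1}B*\Nab f$ by Young's inequality, leaving the coefficient $-1$ on $|\nab^{m+1}B|^2$. Your reading of the restriction $i_1,\dots,i_r<m+2$ in \eqref{horder1} as the bookkeeping record of that absorption is also the correct one.
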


Now, we are ready to show the higher order estimate for the second fundamental form $B$. \begin{lemma}\label{horder}
Assume $L_t=F_t(L)$ is compact and evolves under GLMCF. If 
\begin{align*}
\max_{L_t}|B|^2\leq \Lambda<\infty, \ \ t\in[0, T], \ \ T>0, 
\end{align*}
then for each $m\geq 1$, there exist constants $c_m=c_m(n, \Lambda, \oR_{m+1}, f_{m+3}, T)$ such that 
\begin{align*}
\max_{L_t}|\nab^m B|^2\leq \frac{c_m}{t^m}, \ \ t\in (0, T]. 
\end{align*}
As a direct consequence, we also have 
\begin{align*}
\max_{L_t}|\nab^m H|^2\leq \frac{c_m}{t^m}, \ \ \max_{L_t}|\nab^m K|^2\leq \frac{c_m}{t^m}, \ \ t\in (0, T]. 
\end{align*}
\end{lemma}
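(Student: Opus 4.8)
The plan is to argue by induction on $m$, with the base case $m=0$ being exactly the hypothesis $\max_{L_t}|B|^2\le\Lambda$, and to run a weighted maximum-principle argument of Shi--Bando type based on the evolution inequality \eqref{horder1} of Lemma \ref{hderiv}. Fix $m\ge1$ and assume the estimate holds for all orders $0\le j\le m-1$, so that $\max_{L_t}|\nab^jB|^2\le c_j\,t^{-j}$ on $(0,T]$. The quantity I would monitor is the telescoping combination
\[
\Phi:=\sum_{j=0}^{m}a_j\,t^{j}\,|\nab^{j}B|^2,
\]
where $a_m:=1$ and the remaining constants $a_{m-1},\dots,a_0$ are fixed recursively (downward) and depend only on $m$, $\Lambda$, $\oR_{m+1}$, $f_{m+3}$ and $T$. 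The decisive reason for this particular combination is structural: when $\Phi$ is differentiated in time, the term coming from differentiating the weight $t^{j}$, namely $a_j\,j\,t^{j-1}|\nab^{j}B|^2$, carries \emph{exactly} the same power of $t$ as the good gradient term $-a_{j-1}t^{j-1}|\nab^{j}B|^2$ produced at level $j-1$ by the $-|\nab^{j}B|^2$ term in \eqref{horder1}. Carrying the telescoping all the way down to order $0$ is precisely what prevents any residual $t^{-1}$ singularity from surviving; truncating at a positive order would leave an unabsorbed term of the form $t^{-1}\bigl(t^{m-1}|\nab^{m-1}B|^2\bigr)\le c_{m-1}t^{-1}$.

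Concretely, I would compute $\frac{d}{dt}\Phi$ by applying \eqref{horder1} at each level $j$ and collecting the coefficient of every $t^{j-1}|\nab^{j}B|^2$. This coefficient equals $a_j\,j-a_{j-1}$ plus a contribution of the form $a_j(\Lambda+C_0)T$ arising from those reaction terms on the right-hand side of \eqref{horder1} that are quadratic in the top derivative $|\nab^{j}B|$ and have their other factors of order $0$ (controlled by $\Lambda$) or absorbed into the ambient bound $C_0=C_0(\oR_{m+1},f_{m+3})$, after writing $t^{j}=t\cdot t^{j-1}\le T\,t^{j-1}$. Choosing $a_{j-1}\ge a_j\bigl(j+(\Lambda+C_0)T+1\bigr)$ for $j=m,\dots,1$ makes every such coefficient at most $-a_j$, leaving a spare negative term $-a_j\,t^{j-1}|\nab^{j}B|^2$ at each level. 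Every remaining reaction term involves only derivatives $|\nab^{i}B|$ with $i<j\le m$ (at most one factor of top order can occur, since two would force a total order exceeding $m$), so by the inductive hypothesis together with the fixed bounds $\oR_{m+1}$ and $f_{m+3}$ these are estimated by $C\,t^{-i/2}$ and, after a Young inequality against the spare good terms, contribute at most $\tfrac12 a_j t^{j-1}|\nab^{j}B|^2+C$ with $C=C(n,\Lambda,\oR_{m+1},f_{m+3},T)$. The upshot is a clean differential inequality $\frac{d}{dt}\Phi\le\Delta\Phi+C$ in which no positive derivative-squared term survives.

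It then remains to apply Hamilton's maximum principle for the time-dependent induced metric on the compact manifold $L$: at an interior spatial maximum one has $\Delta\Phi\le0$, hence $\frac{d}{dt}\max_{L}\Phi\le C$ and therefore $\max_{L_t}\Phi\le\max_{L}\Phi|_{t=0}+CT$. Since every weight $t^{j}$ with $j\ge1$ vanishes at $t=0$, we have $\Phi|_{t=0}=a_0|B|^2\le a_0\Lambda$, and so $t^{m}|\nab^{m}B|^2\le\Phi\le a_0\Lambda+CT=:c_m$, which is the claimed bound. The consequences for $H$ and $K$ then follow at once: from $H={\rm tr}\,B$ one gets $|\nab^{m}H|\le c(n)|\nab^{m}B|$, and from $K=H-n(\Nab f)^{\perp}$, together with the compactness of $M$ (which bounds all ambient derivatives of $f$) and the already established bounds on $|\nab^{j}B|$ for $j\le m$ (needed because differentiating the normal projection $(\Nab f)^{\perp}$ reintroduces $B$ and its derivatives via the Weingarten relation), one obtains $|\nab^{m}K|^2\le c_m\,t^{-m}$ as well.

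I expect the main obstacle to be exactly the bookkeeping of the second paragraph: verifying that the recursive choice of the $a_j$ simultaneously absorbs the weight-derivative terms and the quadratic reaction terms, while confirming that every genuinely nonlinear cross term left over from \eqref{horder1} really involves only orders strictly below $m$, so that the inductive hypothesis and Young's inequality apply without circularity. Matching all powers of $t$ throughout — so that the final differential inequality is free of any singular term as $t\to0^{+}$ — is the technical heart of the argument.
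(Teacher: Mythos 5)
Your proposal is correct, and it is essentially the argument the paper has in mind: the paper omits this proof entirely, deferring to \cite{HaS}, whose technique is exactly the time-weighted telescoping quantity $\sum_j a_j t^j|\nab^j B|^2$ combined with the evolution inequality \eqref{horder1} and Hamilton's maximum principle, i.e.\ the standard Shi/Bando-type smoothing estimate you carry out. Your bookkeeping (absorption of the weight-derivative terms by the good gradient terms one level down, the bounded-coefficient quadratic top-order terms, and the inductive control of strictly lower-order factors) fills in precisely the details the paper declines to write.
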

For the proof, see \cite{HaS} in the case of MCF. Although GLMCF case has to deal with the weight function $f$, a similar technique to \cite{HaS} also work well. Thus, we omit the proof. 

In \cite{LiY} and \cite{SuY}, by using the higher order estimate of $B$ and standard application of the maximum principle, they showed the extension result for GLMCF. 
\begin{proposition}\label{long}
If the second fundamental form of $L_t=F_t(L)$ is uniformly bounded under GLMCF for $t\in [0,T)$, then the solution can be extended beyond $T$. 
\end{proposition}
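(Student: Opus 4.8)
The plan is to show that the uniform bound on $|B|$ prevents any loss of regularity as $t\to T$, so that the immersions $F_t$ converge smoothly to a Lagrangian immersion $F_T$, after which one simply restarts the flow from $F_T$ via short-time existence.

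First I would control the velocity and the induced metric. Since $M$ is compact, $\Nab f$ is bounded, so $|K|=|H-n(\Nab f)^\perp|\leq\sqrt{n}\,|B|+C'$ is uniformly bounded on $[0,T)$. From the evolution of the induced metric with variation vector $V=K$, namely $\frac{d}{dt}g_{ij}=-2K^m h^m_{ij}$ (the lemma of Subsection \ref{2ndv}), together with $|B|\leq\sqrt{\Lambda}$, one gets $|\frac{d}{dt}g_{ij}|\leq C$, so the metrics $g_t$ stay uniformly equivalent and form a Cauchy family in $C^0$ as $t\to T$. Likewise $\frac{d}{dt}F_t=K$ with $|K|$ bounded shows $F_t$ is Cauchy in $C^0$, converging to a continuous limit $F_T$.

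Next I would upgrade this to smooth convergence using Lemma \ref{horder}. On any subinterval $[\ep,T)$ with $\ep>0$ that lemma yields uniform bounds $\max_{L_t}|\nab^m B|^2\leq c_m/\ep^m$ for every $m\geq 1$, and correspondingly $\max_{L_t}|\nab^m K|^2\leq c_m/\ep^m$. Combined with the uniform equivalence of the $g_t$, these intrinsic bounds translate, in a fixed finite atlas of $L$, into uniform $C^k$ bounds on the maps $F_t$ and on the velocity $\p_t F_t=K$ for each $k$. Integrating $\p_t F_t=K$ in $t$ then shows every fixed-order derivative of $F_t$ is equicontinuous in time, so the $C^0$ convergence improves to convergence in $C^\infty$; since the $g_t$ do not degenerate, the limit $F_T:L\to M$ is a smooth immersion.

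Finally, because GLMCF preserves the Lagrangian condition (Behrndt \cite{Beh}), each $F_t$ satisfies $F_t^*\omega=0$, and smooth convergence gives $F_T^*\omega=0$, so $F_T$ is a Lagrangian immersion of the compact $L$. Invoking short-time existence and uniqueness for \eqref{glmcf} with initial data $F_T$ produces a solution on $[T,T+\delta)$; by uniqueness it agrees with the original flow on the overlap, and concatenation yields a smooth solution on $[0,T+\delta)$, extending the flow beyond $T$. The main obstacle is the third paragraph: converting the intrinsic curvature bounds of Lemma \ref{horder} into genuine $C^\infty$ control of the maps $F_t$ in a fixed coordinate system, and checking that the limiting induced metric remains non-degenerate, so that $F_T$ is an honest immersion from which the flow can be restarted.
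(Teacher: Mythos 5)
Your proposal is correct and follows essentially the same route as the paper, which gives no independent proof but cites \cite{LiY} and \cite{SuY}, where the extension result is obtained exactly as you describe: the bound on $|B|$ yields the higher-order estimates of Lemma \ref{horder} via the maximum principle, these give smooth convergence $F_t\to F_T$ as $t\to T$ to a Lagrangian immersion of the compact $L$, and short-time existence and uniqueness restart the flow from $F_T$. The technical points you flag at the end (turning the intrinsic bounds on $\nab^m B$ into $C^k$ control of the immersions in a fixed atlas, and non-degeneracy of the limit metric) are the standard ones handled in those references.
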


Next, we show that initial data do not change so much at least for a short time under GLMCF.
\begin{lemma}\label{unch}
If $L_0$ satisfies 
\begin{align*}
|B|(0)\leq \Lambda, \ \ |K|(0)\leq \epsilon, \ \ \lambda_1(0)\geq C+\delta, 
\end{align*}
then there exists $T=T(n, \Lambda, \oR_1, f_3)$ so that GLMCF satisfies 
\begin{align*}
|B|(t)\leq 2\Lambda, \ \ |K|(t)\leq 2\epsilon, \ \ \lambda_1(t)\geq C+\frac{2}{3}\delta, \ \  t\in[0, T]. 
\end{align*}
\end{lemma}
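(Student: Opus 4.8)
The plan is to run a continuity (bootstrap) argument driven by the evolution estimates already established. Set
\[
T^*:=\sup\Big\{\tau\ge 0:\ |B|(t)\le 2\Lambda,\ |K|(t)\le 2\epsilon,\ \lambda_1(t)\ge C+\tfrac23\delta\ \text{ for all } t\in[0,\tau]\Big\}.
\]
Since the initial data satisfy all three inequalities \emph{strictly} ($\Lambda<2\Lambda$, $\epsilon<2\epsilon$, $C+\delta>C+\tfrac23\delta$), and $F_t,B,K$ depend smoothly on $t$ while $\lambda_1(t)$ is continuous in $t$, we have $T^*>0$. The aim is to produce an explicit $T>0$ depending only on the allowed constants such that on $[0,T^*)\cap[0,T)$ each quantity stays \emph{strictly} inside its doubled bound; this forces $T^*\ge T$ and yields the conclusion on $[0,T]$. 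All estimates below are carried out on $[0,T^*)$, where the weak bounds $|B|\le2\Lambda$, $|K|\le2\epsilon$ may be freely used to control reaction terms.

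For the second fundamental form I would take $m=0$ in Lemma \ref{hderiv}, i.e. \eqref{horder1}, which gives
\[
\frac{d}{dt}|B|^2\le \Delta|B|^2-|\nab B|^2+c\,|B|^4+c_{1,3}\big(|B|^3+|B|^2+|B|\big),
\]
with $c_{1,3}$ depending only on $\oR_1,f_3$. Applying Hamilton's trick to $t\mapsto\max_{L_t}|B|^2$ and discarding the nonpositive $\Delta|B|^2-|\nab B|^2$ at an interior maximum, the right-hand side is bounded on $[0,T^*)$ by a constant $C_1=C_1(n,\Lambda,\oR_1,f_3)$, so $\max_{L_t}|B|^2\le\Lambda^2+C_1t<4\Lambda^2$ for $t<T_1:=3\Lambda^2/C_1$. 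For $K$ the decisive structural fact is that $K\equiv0$ is stationary, so every forcing term in the evolution of $|K|^2$ must carry a factor of $K$. Concretely, differentiating $|K|^2=g^{ij}K_iK_j$, inserting the metric-variation formula (Lemma~3.2 of \cite{Oh2}) with $V=K$ and the form evolution \eqref{dtaK}, and using $d\alpha_K=0$ together with a Bochner--Weitzenb\"ock identity (so that $-d\delta_f\alpha_K$ becomes a weighted Hodge Laplacian and the intrinsic curvature of $L_t$ is absorbed through the Gauss equation), should produce a Simons-type inequality of the schematic form
\[
\frac{d}{dt}|K|^2\le \Delta_f|K|^2+c\big(\oR_0+|B|^2+C\big)|K|^2+c\,|B|\,|K|^3 .
\]
On $[0,T^*)$ its coefficient is $\le C_2=C_2(n,\Lambda,C,\oR_1)$, so Hamilton's trick and Gronwall give $\max_{L_t}|K|^2\le\epsilon^2e^{C_2t}<4\epsilon^2$ for $t<T_2:=(\log4)/C_2$.

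The eigenvalue bound is then immediate from Lemma \ref{ef}: by \eqref{eff} we have $\lambda_1(t)\ge e^{-E(t)}\lambda_1(0)$ with $E(t)=c(n)\int_0^t\max_{L_s}(|B||K|+|K|^2)\,ds$. On $[0,T^*)$ the integrand is $\le 4\Lambda\epsilon+4\epsilon^2$, so $E(t)\le c(n)(4\Lambda\epsilon+4\epsilon^2)t$, and choosing $T_3$ by $c(n)(4\Lambda\epsilon+4\epsilon^2)T_3=\log\frac{C+\delta}{C+\frac23\delta}$ gives $\lambda_1(t)\ge e^{-E(t)}(C+\delta)>C+\tfrac23\delta$ for $t<T_3$. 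Setting $T:=\min\{T_1,T_2,T_3\}$, on $[0,T^*)\cap[0,T)$ all three quantities lie strictly inside their doubled bounds; if $T^*<T$ held, continuity at $t=T^*$ would force one of the three defining inequalities to be an equality there, contradicting the strict bounds just obtained. Hence $T^*\ge T$, proving the lemma. I expect the main obstacle to be the $K$-step: deriving the Simons-type inequality in which every forcing term is at least quadratic in $K$, which requires passing from the first-order form evolution \eqref{dtaK} to a pointwise inequality for the norm in the moving metric --- handling the $\tfrac{d}{dt}g^{ij}$ contribution, rewriting $-d\delta_f\alpha_K$ via closedness and Bochner, and controlling the intrinsic Ricci term by $\oR_0$ and $|B|^2$. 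The rest is bookkeeping to confirm $C_1,C_2,E$ depend only on the stated data so that $T$ is controlled.
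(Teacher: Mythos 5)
Your proposal is correct and follows essentially the same route as the paper: a bootstrap on the doubled bounds, the $m=0$ case of \eqref{horder1} plus the maximum principle for $|B|^2$, a Simons-type inequality for $|K|^2$ in which every reaction term carries a factor of $K$ (this is exactly the inequality the paper derives, including the drift term $ng(\nab f,\nab|K|^2)$ that makes $\Delta_f$ appear), and the eigenvalue bound \eqref{eff} from Lemma \ref{ef}. The only divergence is cosmetic: you propose to obtain the $|K|^2$ inequality from the form evolution \eqref{dtaK} via closedness, the weighted Bochner--Weitzenb\"ock formula and the Gauss equation, whereas the paper computes the evolution of $K^i=H^i-n(\Nab_i f)^{\perp}$ componentwise; both yield the same estimate with constants depending on the same data.
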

\begin{proof}
First, we show the Lemma for $|B|^2$. From \eqref{horder1} in Lemma \ref{hderiv}, we have 
\begin{align*}
\frac{d}{dt}|B|^2&\leq \Delta|B|^2+c_1|B|^4+c_2|B|^3+c_3|B|^2+c_4|B|\\
&\leq \Delta|B|^2+c_5|B|^4+c_6. 
\end{align*}
Note that we have used Young's inequality in the last line. Set 
\begin{align*}
t_0:=\sup\{s>0\ |\ |B|^2(t)\leq 4\Lambda, \ t\in[0, s)\}. 
\end{align*}
Then, for $t\in[0, t_0)$, we have 
\begin{align*}
\frac{d}{dt}|B|^2\leq \Delta|B|^2+(16\Lambda^4+1)c  
\end{align*}
for some constant $c>0$. 
Applying the maximum principle, we obtain 
\begin{align*} 
|B|^2(t)\leq \max_{L_0}|B|^2(0)+(16\Lambda^4+1)ct\leq 2\Lambda^2, \ \ \ 0\leq t\leq \frac{\Lambda^2}{(16\Lambda^4+1)c}. 
\end{align*}
Therefore $t_0$ satisfies 
\begin{align*}
t_0\geq \frac{\Lambda^2}{(16\Lambda^4+1)c}. 
\end{align*} 
Next, we prove the Lemma for $|K|$. A similar computation to \eqref{dsaKF} shows 
\begin{align*}
\frac{d}{dt}K^i=&\ \frac{d}{dt}(H^i-n(\Nab_if)^\perp)\nonumber\\
=&\ \Delta K^i+K^jh^{j}_{kl}h^i_{kl}-H^jK^kh^i_{jk}+K^j\oR_{\ui l\uj l} \\
&-n(\Nab_{\ui}\Nab_{\uj} f)K^j+n\nab_iK^j\nab_j f-nh^k_{ij}K^j\Nab_{\uk}f,  \nonumber 
\end{align*}
and then it follows  
\begin{align*}
\frac{d}{dt}|K|^2\leq&\ \Delta |K|^2+c|B||K|^3+c|H||B||K|^2+c(\oR_0)|K|^2+c|B|^2|K|^2\\
&+2ng(\nab f, \nab_{JK}(JK))+c(f_1)|B||K|^2. 
\end{align*}
Note that 
\begin{align*}
2ng(\nab f, \nab_{JK}(JK))&=ng(\nab f, \nab|K|^2), \\
|K|\leq |H|+n|\Nab f|&\leq\sqrt{n}|B|+c(f_1). 
\end{align*}
Therefore, for $t\leq t_0$, we have  
\begin{align*}
\frac{d}{dt}|K|^2\leq&\ \Delta |K|^2+ng(\nab f, \nab|K|^2)+(16\Lambda^2+4\Lambda+1)c|K|^2. 
\end{align*}
Thus, we can apply the maximum principle to obtain 
\begin{align*}
|K|^2(t)\leq |K|^2(0)e^{(16\Lambda^2+4\Lambda+1)ct}\leq 4\epsilon^2, \ \ t\in\bigg[0, \min\Big\{t_0, \frac{\log 4}{(16\Lambda^2+4\Lambda+1)c}\Big\}\bigg]. 
\end{align*}
Then, the Lemma for $|B|$ and $|K|$ follows if we choose $t_1$ as 
\begin{align*}
t_1=\min\bigg\{\frac{\Lambda^2}{(16\Lambda^4+1)c},\  \frac{\log 4}{(16\Lambda^2+4\Lambda+1)c} \bigg\}.  
\end{align*}
Finally, we show the lemma for $\lambda_1$. Recall the estimate \eqref{eff} for $\lambda_1$ in $[0, t_1]$, then we have 
\begin{align*}
\lambda_1(t)& \geq e^{-\int_0^{t_1}2\max(|B||K|+|K|^2)dt}\lambda_1(0)\\
&\geq e^{-(8\Lambda \epsilon+8\epsilon^2) t_1}. 
\end{align*}  
Hence we obtain 
\begin{align*}
\lambda_1(t)\geq C+\frac{2}{3}\delta, \ \ t\in[0, T], 
\end{align*}
for sufficiently small $T\leq t_1$. 
\end{proof}

Here, we consider the volume ratio on $L^n$. Let $B(x, s)\subset L^n$ be a geodesic ball centered at $x\in L$ with radius $s>0$. Since $L$ is compact, its injectivity radius $\inj(L)$ is bounded from below (See Proposition 14 in \cite{Cro}). Then there exist some positive constants $\kappa=\kappa(n, \inj(L))$ and $r=r(n, \inj(L))$ so that
\begin{align}\label{noncollapsing}
\frac{\Vol(B(x, s))}{s^n}\geq \kappa, \ \ \forall x\in L, \ \ 0<s\leq r.
\end{align}
This volume ratio condition is called \textit{$\kappa$-noncollapsed on the scale $r$}. 
In order to see the control of the volume ratio condition, we need to check the change of the injectivity radius of $L$. 
\begin{lemma}\label{inje}
Assume that GLMCF $L_t$ satisfies 
\begin{align*}
|B|(t)\leq \Lambda<\infty, \quad t\in [0, T], 
\end{align*}
then the injectivity radius of $L_t$ is uniformly bounded from below: 
\begin{align}
\inj(L_t)\geq \iota>0, \quad t\in (0, T]
\end{align}
by some positive constant $\iota=\iota(n, \Lambda, \oR_0, \inj(M))>0$. Moreover there exist time-independent constants $\kappa, r>0$ such that $L_t$ is $\kappa$-noncollapsed on the scale $r$ along GLMCF in $t\in (0, T]$. 
\end{lemma}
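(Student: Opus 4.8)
The plan is to combine an intrinsic curvature bound with a uniform noncollapsing estimate and then invoke the classical injectivity radius estimate of Cheeger--Gromov--Taylor. First I would control the intrinsic geometry of $L_t$. By the Gauss equation, the sectional curvature of the induced metric $(L,g_t)$ is expressed through the ambient sectional curvature of $M$ together with terms quadratic in the second fundamental form $B$. Since $M$ is compact, $\oR_0=\sup_M|\oRm|$ is finite, and by hypothesis $|B|(t)\leq\Lambda$ on $[0,T]$; hence there is a two-sided bound $|\sec_{L_t}|\leq \tilde K$ with $\tilde K=\tilde K(n,\Lambda,\oR_0)$ independent of $t$. A curvature bound alone does not control the injectivity radius (a long thin flat torus is a counterexample), so the essential point is to produce a uniform volume lower bound.

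Second I would establish the noncollapsing directly from the local geometry at each fixed time, which is exactly why the final constant is allowed to depend on $\inj(M)$ rather than on any initial noncollapsing data. Fix $x\in L$ and work in geodesic normal coordinates of $M$ centered at $p=F_t(x)$; these are defined on a ball whose radius is comparable to $\inj(M)$ and whose metric coefficients are controlled by $\oR_0$. Because $|B|\leq\Lambda$, the immersion near $x$ can be written as a graph over the tangent plane $dF_t(T_xL)$ of a map $u$ with $u(0)=0$, $Du(0)=0$ and $|D^2u|\leq c(n,\Lambda,\oR_0)$ on a ball of some uniform radius $r=r(n,\Lambda,\oR_0,\inj(M))$. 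On this graphical scale the induced metric is uniformly equivalent to the Euclidean one, so the intrinsic volume of a geodesic ball obeys $\Vol(B(x,s))\geq\kappa s^n$ for all $0<s\leq r$, with $\kappa$ and $r$ independent of $x$ and $t$. This is precisely the asserted $\kappa$-noncollapsing on the scale $r$.

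Finally I would feed the two ingredients---the two-sided curvature bound $|\sec_{L_t}|\leq\tilde K$ and the volume lower bound $\Vol(B(x,s))\geq\kappa s^n$---into the Cheeger--Gromov--Taylor injectivity radius estimate, obtaining $\inj(L_t)\geq\iota$ with $\iota=\iota(n,\Lambda,\oR_0,\inj(M))>0$ uniformly for $t\in(0,T]$. The main obstacle is the second step: producing the graphical representation on a scale that is uniform in \emph{both} $x$ and $t$, and checking that every constant involved depends only on the ambient bounded geometry and on $\Lambda$, and not on the a priori time-dependent intrinsic data of $L_t$ (in particular not on $\inj(L_t)$ itself, which is what we are trying to bound). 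Once the graphical scale is secured, the volume comparison and the concluding application of Cheeger--Gromov--Taylor are routine.
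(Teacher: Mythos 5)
Your proposal is correct, but it runs the logic in the opposite direction from the paper. The paper's proof is essentially two citations: it combines Lemma \ref{horder} with Proposition 2.2 of Chen--He \cite{ChH} to get the injectivity radius bound $\inj(L_t)\geq\iota$ directly from $|B|\leq\Lambda$, and then deduces the $\kappa$-noncollapsing as a \emph{consequence} of that bound via Proposition 14 of Croke \cite{Cro}, which gives $\Vol(B(x,s))\geq c_n s^n$ for all $s\leq\tfrac{1}{2}\inj(L_t)$ with a purely dimensional constant. You instead prove noncollapsing first, via the uniform graphical representation available under $|B|\leq\Lambda$ in an ambient space of bounded geometry, and only then recover the injectivity radius from the Gauss-equation curvature bound together with Cheeger--Gromov--Taylor. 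Both routes are legitimate: the graphical lemma you flag as the ``main obstacle'' is standard for immersions with bounded second fundamental form (it is essentially the mechanism behind Chen--He's Proposition 2.2 itself), though in a curved ambient manifold you should justify the transfer from the Euclidean picture, e.g.\ by the $C^0$ metric comparison in normal coordinates, which uses only $\oR_0$ and $\inj(M)$ and hence keeps the constants as claimed. What your route buys: it is self-contained, the dependence $\iota=\iota(n,\Lambda,\oR_0,\inj(M))$ is transparent, and it never invokes Lemma \ref{horder}, so your bounds hold on all of $[0,T]$ including $t=0$, whereas the paper states them only on $(0,T]$ because the higher-order estimates it feeds into \cite{ChH} degenerate as $t\to 0$. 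What the paper's route buys: brevity, and a noncollapsing constant that is purely dimensional once the injectivity radius is controlled, with no need to redo any graphical analysis.
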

\begin{proof}
Combining Lemma \ref{horder} above and Proposition 2.2 in \cite{ChH}, we first obtain the uniform bound for $\inj(L_t)\geq \iota>0$ in $(0, T]$. If we choose $r>0$ small enough such that $0< r\leq \frac{1}{2}\iota$, then we can apply Proposition 14 in \cite{Cro} to conclude that $L_t$ is $\kappa$-noncollapsed on the scale $r$ for some constant $\kappa>0$ in $(0, T]$.  
\end{proof}

The following Lemma tells us the change of the volume ratio condition from the initial data. 
\begin{lemma} \label{kpnon}
Suppose that $L_0$ is $\kappa$-noncollapsed on the scale $r$. Then along GLMCF, it follows 
\begin{align*}
\frac{\Vol(B_t(x, s))}{s^n}\geq \kappa e^{-E(t)}, 
\end{align*}
where $E(t)>0$ is the function given in Lemma \ref{ef}, namely, 
\begin{align*}
E(t)=c(n)\int_0^t \max_{L_{s}}(|B||K|+|K|^2)ds>0. 
\end{align*}
\end{lemma}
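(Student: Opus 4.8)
The plan is to avoid differentiating $\Vol(B_t(x,s))$ in $t$ directly---which would force us to track the moving boundary $\partial B_t(x,s)$ produced by the evolving metric---and instead to control the \emph{entire} induced metric $g_t=F_t^*g$ two-sidedly. Once $g_t$ is sandwiched between multiples of $g_0$, both the evolving geodesic balls and the evolving volume form can be compared with their initial counterparts, and the hypothesis \eqref{noncollapsing} for $L_0$ can be transplanted to time $t$.

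The first step is to record the evolution of the induced metric. Since the GLMCF velocity $K$ is normal, the variation formula $\tfrac{d}{dt}g_{ij}=-2K^k h^k_{ij}$ (the lemma following \eqref{var2}, with $V=K$) gives, for every fixed tangent vector $X$,
\begin{align*}
\Big|\frac{d}{dt}g_t(X,X)\Big|=\big|2K^k h^k_{ij}X^iX^j\big|\le c(n)\,|K|\,|B|\,g_t(X,X),
\end{align*}
hence $\big|\tfrac{d}{dt}\log g_t(X,X)\big|\le c(n)\max_{L_t}(|K||B|)$. Gronwall's inequality then yields the two-sided comparison of quadratic forms
\begin{align*}
e^{-A(t)}g_0\le g_t\le e^{A(t)}g_0,\qquad A(t):=c(n)\int_0^t\max_{L_s}(|K||B|)\,ds .
\end{align*}

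The second step extracts two consequences and combines them. The upper bound $g_t\le e^{A(t)}g_0$ gives $\dist_{g_t}\le e^{A(t)/2}\dist_{g_0}$, hence the ball inclusion $B_0\!\big(x,\,s\,e^{-A(t)/2}\big)\subseteq B_t(x,s)$; the shrunk radius $s\,e^{-A(t)/2}\le s\le r$ remains admissible because $A(t)\ge 0$. The lower bound gives $\dmu_t=\sqrt{\det g_t}\,dx\ge e^{-\frac n2 A(t)}\dmu_0$ pointwise. Feeding these into \eqref{noncollapsing} for $L_0$,
\begin{align*}
\Vol_{g_t}\big(B_t(x,s)\big)
&\ge \Vol_{g_t}\big(B_0(x,s\,e^{-A(t)/2})\big)
\ge e^{-\frac n2 A(t)}\,\Vol_{g_0}\big(B_0(x,s\,e^{-A(t)/2})\big)\\
&\ge e^{-\frac n2 A(t)}\,\kappa\,\big(s\,e^{-A(t)/2}\big)^n
=\kappa\, s^n\, e^{-nA(t)} .
\end{align*}

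The one bookkeeping point that needs care---and which I expect to be the only real issue---is matching the exponential constant to the function $E(t)$ of Lemma~\ref{ef}. The estimate naturally produces $e^{-nA(t)}$ with $A(t)\sim\int\max(|K||B|)$, which carries no $|K|^2$ term, whereas $E(t)=c(n)\int_0^t\max(|B||K|+|K|^2)\,ds$ does. This is harmless: since $A(t)\ge 0$ and the additional $|K|^2$ term only enlarges $E(t)$, we have $nA(t)\le E(t)$ once the dimensional constant $c(n)$ defining $E$ is taken large enough (the remark after Lemma~\ref{ef} explicitly permits such an adjustment of $c(n)$). Therefore $e^{-nA(t)}\ge e^{-E(t)}$, and we conclude $\Vol(B_t(x,s))/s^n\ge\kappa\,e^{-E(t)}$, as claimed. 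The conceptual obstacle of the moving integration domain is exactly what the two-sided metric comparison circumvents, reducing everything to the single inclusion $B_0(x,s\,e^{-A(t)/2})\subseteq B_t(x,s)$ together with the pointwise volume-form bound.
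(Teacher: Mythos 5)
Your proof is correct. Note that the paper itself gives no proof of this lemma but defers it to Lemma 3.4 of \cite{Li}, and your argument—bounding $\frac{d}{dt}g_t$ by $c(n)\max_{L_t}(|K||B|)\,g_t$ via the metric evolution, applying Gronwall to sandwich $g_t$ between multiples of $g_0$, and then combining the ball inclusion with the volume-form comparison—is essentially the same metric-distortion argument used there, with the extra $|K|^2$ term in $E(t)$ harmlessly absorbed into the dimensional constant $c(n)$ exactly as the remark after Lemma \ref{ef} permits.
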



Next result lead us to get a $C^0$ estimate from an $L^2$ estimate. 
\begin{lemma}\label{zero}
Suppose $L\subset M$ is compact and $\kappa$-noncollapsed on the scale $r$. For any tensor $S$ on $L$, if 
\begin{align*}
|\nab S|\leq \Lambda\  \textup{\it and}\ \int_L|S|^2\dmu_f\leq \epsilon\leq r^{n+2}, 
\end{align*}
then it follows 
\begin{align*}
\max_L|S|\leq \Big(\frac{1}{\sqrt{\kappa\cdot \min_M e^{nf}}}+\Lambda \Big)\epsilon^{\frac{1}{n+2}}=c\epsilon^{\frac{1}{n+2}},  
\end{align*} 
where $c=c(n, \Lambda, f_0, \inj(M))>0$ is a constant. 
\end{lemma}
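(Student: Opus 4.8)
The plan is to argue at a maximum point of $|S|$ and convert the $L^2$ smallness into a pointwise bound by combining the gradient bound with the noncollapsing hypothesis. First I would choose $x_0\in L$ with $M_0:=|S|(x_0)=\max_L|S|$ and fix the scale $s:=\epsilon^{1/(n+2)}$. Since $\epsilon\leq r^{n+2}$, this gives $s\leq r$, so the $\kappa$-noncollapsing estimate \eqref{noncollapsing} is applicable to the geodesic ball $B(x_0,s)\subset L$, yielding $\Vol(B(x_0,s))\geq\kappa s^n$.

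Next I would use the gradient bound to get a linear lower bound for $|S|$ near $x_0$. By Kato's inequality $|\nab|S||\leq|\nab S|\leq\Lambda$ wherever $|S|\neq0$, so $|S|$ is globally $\Lambda$-Lipschitz on $L$; integrating along a minimizing geodesic issuing from $x_0$ gives $|S|(y)\geq M_0-\Lambda\,\dist(x_0,y)\geq M_0-\Lambda s$ for every $y\in B(x_0,s)$. If $M_0\leq\Lambda s$ the asserted bound already holds, so I may assume $M_0-\Lambda s>0$ and proceed to estimate the weighted integral from below on this ball. Using $\dmu_f=e^{nf}\dmu\geq(\min_Me^{nf})\dmu$ together with the noncollapsing estimate,
\begin{align*}
\epsilon\geq\int_{B(x_0,s)}|S|^2\dmu_f\geq(M_0-\Lambda s)^2\,(\min_Me^{nf})\,\kappa\,s^n.
\end{align*}

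Substituting $s^n=\epsilon^{n/(n+2)}$ and dividing through, the residual power of $\epsilon$ on the right is $\epsilon^{\,1-n/(n+2)}=\epsilon^{\,2/(n+2)}$, so after taking square roots
\begin{align*}
M_0-\Lambda\epsilon^{1/(n+2)}\leq\frac{1}{\sqrt{\kappa\cdot\min_Me^{nf}}}\,\epsilon^{1/(n+2)},
\end{align*}
which rearranges to the claimed inequality $\max_L|S|\leq\big(\tfrac{1}{\sqrt{\kappa\cdot\min_Me^{nf}}}+\Lambda\big)\epsilon^{1/(n+2)}$. The dependence $c=c(n,\Lambda,f_0,\inj(M))$ enters through $\kappa,r$ (depending on $n$ and $\inj(M)$ via \eqref{noncollapsing}) and through $\min_Me^{nf}\geq e^{-nf_0}$, where $f_0=\sup_M|f|$.

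The main obstacle is only the bookkeeping that forces the exponent to be exactly $1/(n+2)$: the scale $s$ must simultaneously satisfy $s\leq r$, so that noncollapsing applies (this is precisely what the hypothesis $\epsilon\leq r^{n+2}$ guarantees), and make the product $s^2\cdot s^n$ balance the $L^2$ bound, which pins down $s=\epsilon^{1/(n+2)}$. A minor technical point is the Lipschitz estimate for $|S|$ at its zero set, which is justified by the fact that $|S|$ is the pointwise norm of a tensor with $|\nab S|\leq\Lambda$ and is therefore $\Lambda$-Lipschitz on all of $L$.
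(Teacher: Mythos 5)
Your proof is correct, and it is essentially the argument the paper relies on: the paper omits the proof and cites Lemma 3.5 of \cite{Li}, whose proof is exactly this maximum-point argument (Lipschitz lower bound from $|\nab S|\leq\Lambda$, noncollapsing volume bound on a ball of radius $s=\epsilon^{1/(n+2)}$), here adapted to the weighted measure via $d\mu_f\geq(\min_M e^{nf})\,d\mu$, which is precisely how the constant $\bigl(\tfrac{1}{\sqrt{\kappa\cdot\min_M e^{nf}}}+\Lambda\bigr)$ in the statement arises. Your choice of scale, the case split $M_0\leq\Lambda s$, and the exponent bookkeeping all match.
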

The proofs for the above two Lemmas can be found in Lemma 3.4 and Lemma 3.5 in \cite{Li}.


\section{Convergence result I} 
In this section, we prove the main results by generalizing Li's argument in \cite{Li} with the weight $f$. 
Our main result is the following: 
\begin{theorem}\label{cvthm}
Let $(M, \omega, J)$ be a compact K\"ahler manifold satisfying $\rho=C\omega+ndd^cf$. Suppose $L$ is a compact and exact Lagrangian submanifold which is smoothly immersed into $M$. For any $V_0, \Lambda_0$ and $\delta_0>0$, there exists $\epsilon_0=\epsilon_0(n, V_0, \Lambda_0, \delta_0,  C, \oR_5, f_7, \inj(M))$ such that if $L$ satisfies 
 \begin{align*}
\Vol_f(L)\leq V_0, \ \ |B|\leq \Lambda_0,  \ \ \lambda_{1}(\Delta_f)\geq C+\delta_0, \ \ \int_{L}|K|^2\dmu_f\leq\epsilon_0, 
\end{align*}
then the generalized mean curvature flow with initial data $L$ converge exponentially fast to an $f$-minimal Lagrangian in $M$. 
\end{theorem}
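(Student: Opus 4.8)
The plan is to run a continuation (bootstrap) argument whose driving quantity is the weighted $L^2$-energy $A(t):=\int_{L_t}|K|^2\dmu_f$. Since $L$ is compact, GLMCF has a short-time solution, and by Proposition~\ref{exact} exactness is preserved, so $\alpha_{K_t}=d\theta_t$ throughout and both Lemma~\ref{dL2} and Lemma~\ref{ef} are available. The goal is to choose $\epsilon_0$ so small that, along the flow, the spectral gap $\lambda_1(t)-C$ stays bounded below by a fixed positive constant while $A(t)$ decays like $e^{-\delta_0 t}$; exponential convergence to an $f$-minimal limit will then follow by interpolation against uniform higher-order bounds.

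First I would record that the velocity is \emph{a priori} bounded at $t=0$: since $|H|\le\sqrt{n}|B|$ and $|(\Nab f)^\perp|\le f_1$, the hypothesis $|B|\le\Lambda_0$ gives $|K|(0)\le\sqrt{n}\Lambda_0+nf_1=:\epsilon_\ast$, a constant (not small) depending only on $n,\Lambda_0,f_1$. Hence Lemma~\ref{unch} supplies a fixed $t_1=t_1(n,\Lambda_0,\oR_1,f_3)>0$ on which $|B|\le2\Lambda_0$, $|K|\le2\epsilon_\ast$ and $\lambda_1\ge C+\tfrac23\delta_0$; integrating Lemma~\ref{dL2} over $[0,t_1]$ (where $-\lambda_1\le-C$) yields $A(t_1)\le e^{8\Lambda_0\epsilon_\ast t_1}\epsilon_0=:C_1\epsilon_0$, still of order $\epsilon_0$. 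For $t\ge t_1$, as long as $|B|\le2\Lambda_0$, Lemma~\ref{horder} bounds $|\nab K|$ uniformly and Lemmas~\ref{inje}--\ref{kpnon} give a time-independent $\kappa$-noncollapsing on a fixed scale; the interpolation Lemma~\ref{zero} then upgrades integral smallness to the pointwise bound $\max_{L_t}|K|\le c\,A(t)^{1/(n+2)}$.

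Now define $T^\ast$ to be the supremum of times on which $|B|\le2\Lambda_0$ and $\lambda_1\ge C+\tfrac12\delta_0$. On $[t_1,T^\ast)$ the pointwise bound gives $\max_{L_t}|B||K|\le2\Lambda_0 c\,(C_1\epsilon_0)^{1/(n+2)}<\tfrac14\delta_0$ once $\epsilon_0$ is small, so Lemma~\ref{dL2} reads $\tfrac{d}{dt}A\le-\tfrac12\delta_0 A$, whence $A(t)\le e^{-\delta_0(t-t_1)/2}A(t_1)$. This exponential decay keeps $\max|K|$ small and, crucially, time-integrable: feeding it into Lemma~\ref{ef}, the increment $E(t)-E(t_1)=c(n)\int_{t_1}^t\max_{L_s}(|B||K|+|K|^2)\,ds$ is at most $c'\epsilon_0^{1/(n+2)}$, so $\lambda_1(t)\ge e^{-c'\epsilon_0^{1/(n+2)}}(C+\tfrac23\delta_0)\ge C+\tfrac12\delta_0$ with strict margin. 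Thus the spectral condition is reproduced for $\epsilon_0$ small.

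The remaining---and main---difficulty is to reproduce the curvature bound $|B|\le2\Lambda_0$ strictly on $[t_1,T^\ast)$. The evolution of $|B|^2$ in Lemma~\ref{hderiv} contains an indefinite reaction term of order $|B|^4$, so a bare maximum principle is insufficient (this is precisely where finite-time singularities could occur). My plan is to bound $|B|$ by coupling the maximum principle with the already-established exponential decay of $K$: since $\int_{t_1}^\infty\max_{L_s}|K|\,ds\le c''\epsilon_0^{1/(n+2)}$ is uniformly small, the immersions travel only a small total distance, and together with the uniform bounds on $\nab^m B$ from Lemma~\ref{horder} and the volume and noncollapsing controls this confines $|B|$ to a small neighborhood of its value at $t_1$, keeping $|B|<2\Lambda_0$; this is the step that consumes the constants $V_0$, $\oR_5$ and $f_7$. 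With both bootstrap conditions reproduced strictly, $T^\ast=\infty$, and the uniform curvature bound yields long-time existence through Proposition~\ref{long}. Finally, interpolating the exponential decay of $A(t)$ against the uniform bounds on $|\nab^m B|$ gives exponential decay of $\max_{L_t}|K|$ and of all derivatives, so $F_t$ converges in $C^\infty$ to a limit immersion with $K\equiv0$, i.e.\ an $f$-minimal Lagrangian; as its first eigenvalue satisfies $\lambda_1(\Delta_f)\ge C+\tfrac12\delta_0>C$, Theorem~\ref{hsta} shows the limit is Hamiltonian $f$-stable.
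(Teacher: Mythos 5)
Your overall strategy coincides with the paper's: the same continuation scheme driven by $A(t)=\int_{L_t}|K|^2\dmu_f$, with Lemma \ref{unch} protecting the initial data, Lemmas \ref{horder}, \ref{inje}, \ref{kpnon} and \ref{zero} upgrading $L^2$-smallness of $K$ to pointwise smallness, Lemma \ref{dL2} giving the exponential decay of $A(t)$, and Lemma \ref{ef} reproducing the spectral gap. Those parts of your argument are correct and match the paper's proof of Claim \ref{keyclaim} almost verbatim.

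The genuine gap is at the step you yourself single out as the main difficulty: preserving $|B|\leq 2\Lambda_0$. Your mechanism --- ``$\int_{t_1}^{\infty}\max_{L_s}|K|\,ds$ is small, so the immersions travel a small total distance, and together with the uniform bounds on $\nab^m B$ this confines $|B|$ near its value at $t_1$'' --- is not a proof. Smallness of the $C^0$-displacement does not control a second-order quantity like $B$, and the uniform bounds $|\nab^m B|\leq c_m$ at each fixed time do not constrain the \emph{time-drift} of $B$ either. Concretely, the drift is governed by \eqref{dtBr} (obtained from \eqref{dth} with $V=K$), namely $\frac{d}{dt}|B|\leq |\nab^2 K|+c|B|^2|K|+c(\oR_0)|K|$: your displacement bound controls only the time integral of the last two terms, while with merely uniform bounds $|\nab^2 K|\leq c$ the integral $\int|\nab^2K|\,dt$ grows linearly in time, so $|B|$ could drift arbitrarily far. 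The missing idea --- which is the heart of the paper's argument and exactly where $\oR_5$ and $f_7$ enter --- is to show that the driving term $|\nab^2 K|$ itself decays exponentially: by the divergence theorem, $\int_{L_t}|\nab^2 K|^2\dmu\leq \int_{L_t}|K||\nab^4 K|\dmu$, where $|\nab^4 K|\leq c_4(n,\Lambda,\oR_5,f_7)$ by Lemma \ref{horder}, so this integral inherits the exponential decay of $\max|K|$; Lemma \ref{zero} applied to the tensor $\nab^2 K$ then yields the pointwise bound $|\nab^2 K|\leq c\,\epsilon^{1/(n+2)^2}e^{-\delta_0 t/(2(n+2)^2)}$, and integrating \eqref{dtBr} in time bounds the total drift of $|B|$ by a quantity of order $\epsilon^{1/(n+2)^2}/\delta_0$, which closes the bootstrap. (Your displacement heuristic could alternatively be made rigorous via a $C^0$--$C^3$ interpolation inequality for the immersions in a uniform gauge, but that argument is of the same depth as the one it replaces and you do not supply it.) Everything after this step in your proposal is fine.
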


\begin{proof}
First, we note that $L$ is $\kappa_1$-noncollapsed on the scale $r_1$ for some $\kappa_1>0$ and $r_1>0$ since $L$ is compact. From Lemma \ref{unch}, there exists $T_0>0$ so that  
\begin{align}
|B|(t)\leq 2\Lambda_0, \ \ \lambda_1(t)\geq C+\frac{2}{3}\delta_0, \ \ t\in[0, T_0],  
\end{align}
and by Lemma \ref{inje}, there exist constants $\kappa_2, r_2>0$ such that $L_t$ is $\kappa_2$-noncollapsed on the scale $r_2$ in $(0, T_0]$ along GLMCF with initial data $L_0=L$. Set $\kappa_0=\min\{\kappa_1, \kappa_2\}>0$ and $r_0=\min\{r_1, r_2\}>0$. Then $L_t$ is $\kappa_0$-noncollapsed on the scale $r_0$ for $t\in [0, T_0]$.

By the estimate \eqref{L2e3}, there exists $c=c(n, \Lambda_0, \delta_0, C)>0$ so that 
\begin{align} 
\int_{L_t}|K|^2\dmu_f\leq e^{ct}\int_{L_0}|K|^2\dmu_f\leq e^{ct}\epsilon_0, \ \ t\in [0, T_0]. 
\end{align}
Therefore if we choose $t_0<T_0$ small enough, it follows 
\begin{align}
\int_{L_t}|K|^2\dmu_f\leq 2\epsilon_0, \ \ t\in [0, t_0]. 
\end{align}
Using Lemma \ref{horder} to obtain 
\begin{align}
|\nab B|(t)\leq c, \ \ t\in \bigg[\frac{1}{2}t_0, t_0\bigg], 
\end{align}
where $c=c(n, \Lambda_0, \oR_2, f_4)>0$ is a uniform constant. By Lemma \ref{zero}, there exists $c=c(n, \Lambda_0, \oR_2, f_4, \kappa_0, r_0)>0$ so that 
\begin{align}
|K|(t)\leq c\epsilon_0^{\frac{1}{n+2}}, \ \ t\in\bigg[\frac{1}{2}t_0, t_0\bigg], 
\end{align}
for $2\epsilon_0\leq r_0^{n+2}$. Now, regard $t=\frac{1}{2}t_0$ as the initial time of GLMCF. Then, without loss of generality, we may assume $L_0$ is $\kappa_0$-noncollapsed on the scale $r_0$ and 
\begin{align}
\Vol_f(L_0)\leq V_0, \ \ |B|(0)\leq \Lambda, \ \ \lambda_1(0)\geq C+\delta, \ \ |K|(0)\leq \epsilon, \label{initialdata}
\end{align}
where $\Lambda:=2\Lambda_0, \delta:=\frac{2}{3}\delta_0$ and $\epsilon:=c\epsilon_0^{\frac{1}{n+2}}$. 

In the following, we consider GLMCF with initial data \eqref{initialdata}. Again by Lemma \ref{unch}, Lemma \ref{inje} and Lemma \ref{kpnon}, if we choose $T>0$ and $\epsilon<1$ small enough, then  
$L_t$ is $\frac{1}{3}\kappa_0$-noncollapsed on the scale $r_0$ and 
 satisfies 
\begin{align*}
(*)\ \ \ \ |B|(t)\leq 6\Lambda, \ \ \lambda_1(t)\geq C+\frac{1}{3}\delta, \ \ |K|(t)\leq  2\epsilon^{\frac{1}{n+2}}, 
\end{align*} 
for $t\in [0, T]$. To show the long-time existence of GLMCF by contradiction, assume that  $[0, T_*), T_*<\infty$ be the maximal time interval in which $L_t$ is  $\frac{1}{3}\kappa_0$-noncollapsed on the scale $r_0$ and satisfies $(*)$. 

\begin{claim} \label{keyclaim}
There exists $\epsilon=\epsilon(n, \kappa, r_0, \delta, \Lambda, V_0, C, \oR_5, f_7, V_0) >0$ so that if $L_t$ is $\frac{1}{3}\kappa_0$-noncollapsed on the scale $r_0$ and satisfies $(*)$, then it follows that $L_t$ is $\frac{2}{3}\kappa_0$-noncollapsed on the scale $r_0$ and satisfies 
\begin{align*}
(**)\ \ \ \ |B|(t)\leq 3\Lambda, \ \ \lambda_1(t)\geq C+\frac{2}{3}\delta, \ \ |K|(t)\leq \epsilon^{\frac{1}{n+2}},  
\end{align*}
for $t\in[0, T_*)$. 
\end{claim} 
Then, however, by Lemma \ref{unch}, Lemma \ref{inje} and Lemma \ref{kpnon} there exists $T_+>0$ such that $L_t$ is again $\frac{1}{3}\kappa_0$-noncollapsed on the scale $r_0$ and satisfies $(*)$ for $t\in [0, T_*+T_+]$. This contradicts the maximality of $T_*<\infty$, so that $T_*=\infty$, and the long-time existence of GLMCF follows immediately.  

Now, we give a proof of Claim \ref{keyclaim}. First, we show the claim about $|K|(t)$. By the assumption $(*)$, the first eigenvalue  satisfies $\lambda_{1}(t)\geq C+\delta/3$. Thus, if we choose $\epsilon>0$ small enough, it follows 
\begin{align*}
\lambda_{1}(t)\geq C+\frac{\delta}{4}+6\Lambda\cdot 2\epsilon^{\frac{1}{n+2}}, \ \ t\in [0, T_*). 
\end{align*} 
Therefore, by the $L^2$ estimate \eqref{L2e3}, the initial condition \eqref{initialdata} and $(*)$, we have 
\begin{align}
\int_{L_t} |K|^2\dmu_f\leq e^{-\frac{\delta_0}{2}t}\int_{L_0}|K|^2\dmu_f\leq c(n, f_0) V_0\epsilon^2e^{-\frac{\delta_0}{2}t}, \ \ t\in[0, T_*). \label{0T}
\end{align}
Now, using Lemma \ref{unch}, there exists small $0<\tau<T_*$ such that 
\begin{align}
|K|(t)\leq 2\epsilon\leq \epsilon^{\frac{1}{n+2}}, \ \ t\in [0, \tau],  \label{a}
\end{align}
for $\epsilon\leq \frac{1}{2}$. 
On the other hand, by the assumption $(*)$, we have the bound $|B|(t)\leq 6\Lambda$ in $[0, T_*)$. Thus, Lemma \ref{horder} with $\tau>0$ implies the uniform bound  
\begin{align*}
|\nab B|(t)\leq c(n, \Lambda, \oR_2, f_4), \ \ t\in [\tau, T_*). 
\end{align*} 
It then follows $|\nab K|(t)\leq c$ for $t\in [\tau, T_*)$. Combining this with \eqref{0T}, by Lemma \ref{zero}, we obtain 
\begin{align*}
|K|(t)\leq c(n, \kappa, r_0, \Lambda, \oR_2, f_4, V_0)\epsilon^{\frac{2}{n+2}}e^{-\frac{\delta_0}{2(n+2)}t}, \ \ t\in[\tau, T_*).  
\end{align*}
Moreover, we can choose $\epsilon$ small such that 
\begin{align*}
c(n, \kappa, r_0, \Lambda, \oR_2, f_4, V_0)\epsilon^{\frac{1}{n+2}}\leq 1, 
\end{align*}
then we have 
\begin{align}
|K|(t)\leq \epsilon^{\frac{1}{n+2}}e^{-\frac{\delta_0}{2(n+2)}t}\leq \epsilon^{\frac{1}{n+2}}, \ \ t\in[\tau, T_*). \label{b}
\end{align}
Combining \eqref{a} and \eqref{b}, we obtain the estimate $|K|(t)\leq \epsilon^{\frac{1}{n+2}}$ in $[0, T_*)$. 

Next we prove the claim about $|B|(t)$. As in the proof for \eqref{b}, by Lemma \ref{horder}, there exists $c_m=c_m(n, \Lambda, \oR_{m+1}, f_{m+3})$ such that 
\begin{align*}
|\nab^m B|\leq c_m, \ \ t\in[\tau, T_*), 
\end{align*} 
thus, it also follows $|\nab^m K|\leq c_m$ for $t\in[\tau, T_*)$. Combining this with the (usual) divergence theorem and \eqref{b}, we have 
\begin{align*}
\int_{L_t}|\nab^2K|^2\dmu \leq\int_{L_t}|K||\nab^4 K|\dmu \leq c(n, \Lambda, \oR_{5}, f_{7}, V_0)\epsilon^{\frac{1}{n+2}}e^{-\frac{\delta_0}{2(n+2)}t}. 
\end{align*} 
for $t\in[\tau, T_*)$. Hence, we can apply Lemma \ref{zero} (without density) to $\nab^2 K$, then it follows 
\begin{align}
|\nab^2 K|\leq c(n, \kappa, r_0, \Lambda, \oR_5, f_7, V_0)\epsilon^{\frac{1}{(n+2)^2}}e^{-\frac{\delta_0}{2(n+2)^2}t}, \ \ t\in[\tau, T_*).  \label{nab2K}
\end{align}
By \eqref{dth}, we have 
\begin{align}
\frac{d}{dt}|B|\leq |\nab^2 K|+c|B|^2|K|+c(\oR_0)|K|. \label{dtBr}
\end{align}
Inserting \eqref{nab2K} into \eqref{dtBr} and integrating it w.r.t. $t$ from $\tau$ to $t$, by \eqref{b}, we have 
\begin{align*}
|B|(t)\leq&\  |B|(\tau)+\int_{\tau}^t|\nab^2 K|+(c+|B|^2)|K|ds\\ 
\leq&\  2\Lambda+c\epsilon^{\frac{1}{(n+2)^2}}\frac{2(n+2)^2}{\delta_0}+\epsilon^{\frac{1}{n+2}}\frac{2(n+2)}{\delta_0}. 
\end{align*}
Therefore, if we choose $\epsilon>0$ small enough, then we have 
\begin{align}
|B|(t)\leq 3\Lambda, \ \ t\in[0, T_*). \label{pc}
\end{align}

Then, we will show the estimate for $\lambda_1(t)$. Since $\lambda_1(0)\geq C+\delta$, using Lemma \ref{unch}, we have 
\begin{align}
\lambda_1(t)\geq C+\frac{2}{3}\delta, \ \ t\in[0, \tau] \label{pd}
\end{align}
for small $0<\tau<T_*$. On the other hand, the property \eqref{b}, \eqref{pc} and the estimate \eqref{eff} implies 
\begin{align}
\lambda_{1}(t)\geq e^{-c(n, \Lambda_0, \delta_0)\big(\epsilon \tau+\epsilon^2\tau+\epsilon^{\frac{1}{n+2}}+\epsilon^{\frac{2}{n+2}}\big)}\lambda_{1}(\tau), \ \ t\in[\tau, T_*). \label{pe}
\end{align}
Combining \eqref{pd} and \eqref{pe}, if $\epsilon>0$ is sufficiently small, we obtain 
\begin{align*}
\lambda_{1}(t)\geq C+\frac{\delta_0}{2}, \ \ t\in[0, T_*). 
\end{align*}

As for the estimate of $\kappa$-noncollapsing, we use the same method as the estimate for $\lambda_1(t)$. By Lemma \ref{inje} and Lemma \ref{kpnon} combining the properties \eqref{b} and \eqref{pc}, we obtain desired estimate if we choose $\epsilon>0$ small enough. This proves Claim \ref{keyclaim}.

Finally, we show the convergence of GLMCF to an $f$-minimal Lagrangian. But this is immediately follows from the property \eqref{b} in the proof of Claim \ref{keyclaim}: 
\begin{align*}
|K|(t)\leq \epsilon^{\frac{1}{n+1}}e^{-c(n, \delta)t}, \ \ t\in[\tau, \infty). 
\end{align*}
This completes the proof of  Theorem \ref{cvthm}.  
\end{proof}

\section{Convergence result II}
The assumption $\lambda_1\geq C+\delta_0$ for some $\delta_0>0$ in the previous Theorem is satisfied for an initial Lagrangian which is sufficiently close to a Hamiltonian $f$-stable Lagrangian with $\lambda_1>C$. For example, a small Hamiltonian deformation of some examples given in Theorem \ref{clifford} satisfies  assumptions of Theorem \ref{cvthm}. However, Theorem \ref{clifford} shows that there exist an example of $f$-minimal and Hamiltonian $f$-stable Lagrangian with $\lambda_1=C$, and we cannot apply Theorem \ref{cvthm} to an initial Lagrangian which is sufficiently close to such an example. In the last section, we consider this latter case following the technique by \cite{Li} again. 

Let $(M, \omega, J)$ be a K\"ahler manifold satisfying $\rho=C\omega+ndd^cf$ for some $f\in C^\infty(M)$. Suppose that $M$ is compact and $C> 0$ as before. 

In the following, we assume $\phi_0:L\rightarrow M$ is a compact $f$-minimal Lagrangian, and $X$ is a Hamiltonian variation vector field along $L_0=\phi_0(L)$, that is $JX=\nab u_0$. Let $\phi_s: L\rightarrow M$ be a Hamiltonian deformation of $L_0$  satisfying $(d/ds)|_{s=0}\phi_s=X$. We write $L_s=\phi_s(L_0)$. Since $\phi_0$ is $f$-minimal, $\phi_s$ is an exact Lagrangian immersion for any $s$ by Proposition \ref{monotone}. Therefore, it is natural to ask whether the GLMCF $F_t$ with initial data $F_0=\phi_s$ converges to a $f$-minimal and Hamiltonian $f$-stable Lagrangian.
 In order to describe a precise statement, we assume the Hamiltonian deformation $\phi_s$ satisfies
\begin{align}
\frac{d^2}{ds^2}\Big|_{s=0}\Vol_f(\phi_s)>0, \label{essHam}
\end{align}
otherwise, $\phi_s$ may be a volume preserving deformation of $\phi_0$ and it does not move under the flow. We call the deformation $\phi_s$ satisfying \eqref{essHam} an \textit{essential Hamiltonian deformation}. 
 By the same way as  Li \cite{Li}, the essential Hamiltonian deformation is characterized as follows:
\begin{lemma}\label{essential}
$\phi_s$ is an essential Hamiltonian deformation of $L_0$ if and only if the Hamiltonian function $u_0$ of the variation vector field $X$ along $L_0$ satisfies $u_0\not\in E_{\lambda_{1}}$, where $E_{\lambda_{1}}$ is the first eigenspace of $\Delta_f$ on $L_0$.  
\end{lemma}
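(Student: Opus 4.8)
The plan is to evaluate the second variation of $\Vol_f$ at the $f$-minimal point $\phi_0$ directly via Corollary \ref{hmin} and then read off its sign from the spectrum of $\Delta_f$. Since $\phi_0$ is $f$-minimal it is in particular Hamiltonian $f$-minimal and $K\equiv 0$, so the two terms $-2g(B(\nabla u_0,\nabla u_0),K)$ and $g(J\nabla u_0,K)^2$ in the formula of Corollary \ref{hmin} drop out. Moreover, the standing hypothesis $\rho_f=C\omega$ (see \eqref{almE}) yields $\rho_f(\nabla u_0,J\nabla u_0)=C\,\omega(\nabla u_0,J\nabla u_0)=C\,g(J\nabla u_0,J\nabla u_0)=C|\nabla u_0|^2$, using $\omega(\cdot,\cdot)=g(J\cdot,\cdot)$ and that $J$ is a $g$-isometry. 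Hence the second variation reduces to the clean quadratic form
\begin{align*}
Q(u_0):=\frac{d^2}{ds^2}\Big|_{s=0}\Vol_f(\phi_s)=\int_L\big(|\Delta_f u_0|^2-C|\nabla u_0|^2\big)\,d\mu_f.
\end{align*}

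Next I would analyze $Q$ spectrally. Since the Hamiltonian vector field $X$ depends only on $du_0$, I may normalize $u_0$ to have zero $d\mu_f$-mean, discarding the kernel (constant) component. Let $0=\lambda_0<\lambda_1\le\lambda_2\le\cdots$ be the eigenvalues of $-\Delta_f$ with $L^2(d\mu_f)$-orthonormal eigenfunctions $\{\varphi_k\}$, and write $u_0=\sum_{k\ge 1}c_k\varphi_k$. Using \eqref{stokes} together with $-\Delta_f\varphi_k=\lambda_k\varphi_k$, I obtain $\int_L|\Delta_f u_0|^2\,d\mu_f=\sum_{k\ge1}\lambda_k^2c_k^2$ and $\int_L|\nabla u_0|^2\,d\mu_f=\sum_{k\ge1}\lambda_kc_k^2$, so that
\begin{align*}
Q(u_0)=\sum_{k\ge1}\lambda_k(\lambda_k-C)\,c_k^2.
\end{align*}
Here I invoke the standing assumption of this section, namely that $\phi_0$ sits at the borderline of Hamiltonian $f$-stability, $\lambda_1=C$ (cf. Theorem \ref{hsta}); this is precisely the delicate marginal case not covered by Theorem \ref{cvthm}.

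With $C=\lambda_1$, every mode in $E_{\lambda_1}$ contributes $\lambda_1(\lambda_1-C)c_k^2=0$, while each mode with $\lambda_k>\lambda_1$ contributes the strictly positive quantity $\lambda_k(\lambda_k-\lambda_1)c_k^2$. Therefore $Q(u_0)\ge0$ always, and $Q(u_0)>0$ if and only if $u_0$ has a nonzero component in some eigenspace with eigenvalue strictly larger than $\lambda_1$, equivalently (after removing the constant) if and only if $u_0\notin E_{\lambda_1}$. This is exactly the asserted equivalence between $u_0\notin E_{\lambda_1}$ and $\phi_s$ being an essential Hamiltonian deformation, i.e. one satisfying \eqref{essHam}.

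The computation is routine once Corollary \ref{hmin} is in hand, so I expect the main difficulty to be conceptual rather than analytic. One must handle the kernel of $\Delta_f$ with care: a constant $u_0$ yields the trivial deformation, so the equivalence is correctly phrased only after working modulo constants (restricting to mean-zero Hamiltonians). Equally important is that the characterization through $E_{\lambda_1}$ relies on the equality $\lambda_1=C$: in the strictly stable regime $\lambda_1>C$ one instead finds $Q(u_0)>0$ for every nonconstant $u_0$, so the statement as worded is special to the marginal case treated here. The only place deserving a careful line is the vanishing of the cross terms in the spectral expansion, i.e. orthogonality in both $L^2(d\mu_f)$ and the weighted Dirichlet form, but this is immediate from \eqref{stokes}.
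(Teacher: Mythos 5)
Your proof is correct and is exactly the argument the paper intends: the paper itself gives no proof, deferring to Li \cite{Li}, and the standard route there is precisely yours --- evaluate the second variation at the $f$-minimal point via Corollary \ref{hmin}, use $\rho_f=C\omega$ and $K\equiv 0$ to reduce it to $\int_L(|\Delta_f u_0|^2-C|\nabla u_0|^2)\,d\mu_f$, and expand spectrally to get $\sum_{k\ge 1}\lambda_k(\lambda_k-C)c_k^2$, which under the section's standing marginal assumption $\lambda_1=C$ is strictly positive if and only if $u_0\notin E_{\lambda_1}$ (modulo constants). Your two caveats --- normalizing $u_0$ to mean zero since $X$ only sees $du_0$, and the fact that the characterization through $E_{\lambda_1}$ genuinely requires $\lambda_1=C$ --- are both apt and implicit in the paper's setting.
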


In order to apply the technique by Li \cite{Li} to our convergence result, we need the following compactness result. The proposition is originally proved by Chen-He \cite{ChH} for MCF in the Euclidean space. Li \cite{Li} pointed out that the compactness result is also valid in general compact ambient manifolds since the manifold is isometrically embedded into some Euclidean space by the embedding theorem, and the corresponding second fundamental forms are still uniformly bounded. 
\begin{proposition}[cf. \cite{ChH}, \cite{Li}] \label{cptmcf}
Let $\phi_k(t):L\rightarrow M$ be a sequence of generalized Lagrangian mean curvature flow from a compact submanifold $L$ into a compact K\"ahler manifold satisfying $\rho=C\omega+ndd^cf$ with uniformly bounded second fundamental forms 
\begin{align*}
|B_k|(t)\leq C, \ \ \forall t\in [0, T]. 
\end{align*} 
Then there exists a subsequence of $\phi_k(t)$ which smoothly converges to a generalized Lagrangian mean curvature flow $\phi_\infty(t)$ for each $t\in(0, T)$ in the geometric sense, and $L_\infty=\phi_\infty(t)(L)$ is a smooth Riemannian manifold. 
\end{proposition}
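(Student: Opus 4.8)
The plan is to reduce the statement to the Euclidean case of Chen--He \cite{ChH} by means of an isometric embedding, and then to feed in the higher order and non-collapsing estimates already established in Section 5. First I would invoke Nash's isometric embedding theorem to embed the compact K\"ahler manifold $M$ isometrically into some Euclidean space $\R^N$. Writing $A^M$ for the second fundamental form of $M\hookrightarrow \R^N$, the compactness of $M$ guarantees that $A^M$ together with all of its covariant derivatives is uniformly bounded. Regarding each $\phi_k(t)$ as a family of immersions $L\rightarrow M\subset \R^N$, the Gauss formula expresses the second fundamental form $\widehat{B}_k$ of the image in $\R^N$ as the sum of the intrinsic second fundamental form $B_k$ in $M$ and a contribution of $A^M$ restricted to the image. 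Hence the hypothesis $|B_k|(t)\leq C$ combined with the bound on $A^M$ yields a uniform bound $|\widehat{B}_k|(t)\leq C'$ in $\R^N$ for all $t\in[0,T]$.

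Next I would extract uniform smooth bounds on any compact subinterval $[t_0, T]\subset (0,T]$. By Lemma \ref{horder}, the uniform bound on $|B_k|$ produces, for every $m\geq 1$, estimates $\max|\nab^m B_k|^2\leq c_m/t^m$; together with the boundedness of the extrinsic geometry of $M\hookrightarrow \R^N$ these upgrade to uniform $C^\infty$ bounds on $\widehat{B}_k$ and all of its derivatives on $[t_0, T]$. Simultaneously, Lemma \ref{inje} provides a uniform lower bound $\inj\geq \iota>0$ on the injectivity radius of every time slice together with a uniform $\kappa$-noncollapsing on a fixed scale $r$ for $t\in(0,T]$. The non-collapsing is precisely what prevents the limit from degenerating and ensures that the limiting object remains a genuine smooth immersed submanifold.

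With uniform smooth control and uniform non-collapsing in hand, I would apply the Euclidean compactness theorem of Chen--He \cite{ChH}, which is an Arzel\`a--Ascoli / Cheeger--Gromov type statement for a sequence of flows with uniformly bounded geometry. This extracts a subsequence of $\phi_k(t)$ converging smoothly, for each $t\in(0,T)$, to a limiting family $\phi_\infty(t)$ of immersions into $\R^N$ whose images lie in the closed subset $M$, so that $L_\infty=\phi_\infty(t)(L)$ is a smooth Riemannian manifold. Passing to the limit in the evolution equation \eqref{glmcf} then shows $\phi_\infty(t)$ is again a solution of the flow.

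The step I expect to demand the most care is verifying that the limit flow is once more a \emph{generalized} Lagrangian mean curvature flow rather than an ordinary one, since the driving vector carries the lower order correction $-n(\Nab f)^\perp$. This, however, is not a genuine obstacle: because $f\in C^\infty(M)$ and $M$ is fixed and compact, both $\Nab f$ and its normal projection are smooth bounded quantities depending continuously on the immersion and its first derivatives, so $-n(\Nab f)^\perp$ converges under the smooth convergence of $\phi_k(t)$, and the limiting immersions satisfy \eqref{glmcf}. The entire substance of the argument is thus packaged into the embedding trick together with the Section 5 estimates, exactly as observed by Li \cite{Li}.
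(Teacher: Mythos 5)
Your proposal is correct and follows essentially the same route as the paper: the Nash embedding trick (which the paper attributes to Li's observation), the higher order estimates of Lemma \ref{horder} made possible by the uniform bound $|B_k|(t)\leq C$, and the Chen--He compactness argument, with the GLMCF equation passing to the limit because the correction term $-n(\Nab f)^\perp$ is smooth and bounded on the compact $M$. The only cosmetic difference is that you spell out details (Gauss formula, non-collapsing via Lemma \ref{inje}, convergence of the lower order term) that the paper compresses into a citation of \cite{ChH} and \cite{Li}.
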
 
\begin{proof}
Similar to the usual MCF, we have uniform bounds of all the higher order derivatives of the second fundamental forms for GLMCF as in subsection \ref{smest} because $|B_k|(t)\leq C$. Likewise, since each $\phi_k(t)$ satisfies GLMCF, all the time derivatives are also uniformly bounded. Then the proposition follows from the same argument in \cite{ChH} and \cite{Li}. 
\end{proof}

Finally, we consider GLMCF starting from $\phi_s(L_0)$, where $L_0\subset M$ is an $f$-minimal Lagrangian with $\lambda_1=C$, and $\phi_s$ is a small Hamiltonian deformation of $L_0$.  We denote by $L_{s, t}=\phi_{s,t}(L_0), (t\in[0, T])$ the GLMCF with initial data $L_s=\phi_s(L_0)$. Since $L_0$ is $f$-minimal, $\alpha_{K_{s,0}}$ of $L_{s,0}$ is exact by Proposition \ref{monotone}.  Also by Lemma \ref{exact}, $\alpha_{K_{s,t}}$ on $L_{s, t}$ is exact, and we denote the Lagrangian angle by $\theta_{s,t}$. Suppose that the Hamiltonian deformation $\phi_s$ is sufficiently close to $\phi_0$ in the following sense: 
\begin{align}
||\phi_s-\phi_0||_{C^3}\leq \epsilon_0 \label{smdef}
\end{align}
for small $\epsilon_0>0$ which will be determined later. Then we have the following:  
\begin{lemma} \label{esse}
Let $L_0$ be an $f$-minimal Lagrangian with $\lambda_1=C$, and $X$ be an essential Hamiltonian variation along $L_0$ with $JX=\nab u_0$.  For any $\Lambda>0$, there exists $\epsilon_0=\epsilon_0(L_0, X, M)>0$ and $\delta_0>0$ such that if $L_{s,t}$ satisfies 
\begin{align}
|B_s|(t)\leq \Lambda, \ \ |K_s|(t)\leq \epsilon_0, \ \ \forall t\in [0, T], 
\end{align} 
then the Lagrangian angle $\theta_{s,t}$ of $L_{s,t}$ satisfies 
\begin{align}
\int_{L_{s,t}}|\Delta_f\theta_{s,t}|^2\dmu_f\geq (C+\delta_0)\int_{L_{s,t}}|\nab\theta_{s,t}|^2\dmu_f, \ \ \forall t\in[0, T]. 
\end{align}
Moreover, we have 
\begin{align}
\frac{d}{dt}\int_{L_{s,t}}|K_{s,t}|^2\dmu_f \leq -2(\delta_0-\Lambda\epsilon_0)\int_{L_{s,t}}|K_{s,t}|^2\dmu_f, \ \ \forall t\in [0, T]. 
\end{align}
\end{lemma}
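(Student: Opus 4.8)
The plan is to prove the two displayed inequalities in the stated order, but first to observe that the ``Moreover'' estimate is a formal consequence of the spectral gap. Granting
$\int_{L_{s,t}}|\Delta_f\theta_{s,t}|^2\,d\mu_f\ge (C+\delta_0)\int_{L_{s,t}}|\nabla\theta_{s,t}|^2\,d\mu_f$, I would insert it into the exact evolution identity of Lemma \ref{dL2}. Using $\nabla\theta=JK$ (so $|\nabla\theta|^2=|K|^2$), the hypotheses $|B_s|\le\Lambda$ and $|K_s|\le\epsilon_0$ bound the cubic term by $2g(B(JK,JK),K)\le 2\Lambda\epsilon_0|K|^2$ and discard $-|K|^4\le0$, giving
\[
\frac{d}{dt}\int_{L_{s,t}}|K|^2\,d\mu_f\le\bigl(-2(C+\delta_0)+2C+2\Lambda\epsilon_0\bigr)\int_{L_{s,t}}|K|^2\,d\mu_f=-2(\delta_0-\Lambda\epsilon_0)\int_{L_{s,t}}|K|^2\,d\mu_f,
\]
which is exactly the second assertion. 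Hence all the work lies in the spectral gap.

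The mechanism for the gap is that $\theta_{s,t}$ is almost orthogonal to the first eigenspace $E_{\lambda_1}=E_C$ of $\Delta_f$ on $L_{s,t}$, so that the relevant Rayleigh quotient is governed by the next eigenvalue rather than by $\lambda_1\approx C$. Writing the mean-zero $\theta$ in eigenfunctions of $-\Delta_f$ and letting $\theta^{(1)}$ be its $E_C$-component and $\theta^{(\perp)}$ the sum of modes with $\lambda_k>C$, one has the algebraic identity $\int|\Delta_f\theta|^2-(C+\delta_0)\int|\nabla\theta|^2=\sum_k\lambda_k(\lambda_k-C-\delta_0)\|\theta^{(k)}\|^2$. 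Since $L_0$ satisfies $\lambda_1(L_0)=C$ while its first eigenvalue strictly exceeding $C$, call it $\mu>C$, is isolated, and since the spectrum varies continuously under a flow of bounded geometry close to $L_0$, the modes with $\lambda_k>C$ contribute at least $(\mu-C-\delta_0)\cdot(\text{const})\,\|\theta^{(\perp)}\|^2>0$, while the sole obstruction is the $k=1$ term $\approx -C\delta_0\|\theta^{(1)}\|^2$. Thus the gap reduces to showing $\|\theta^{(1)}\|$ is small compared with $\|\nabla\theta\|$.

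To control $\theta^{(1)}$ I would first examine $t=0$: by Lemma \ref{lagvar}, $\tfrac{d}{ds}\theta_s=\Delta_f u_s+Cu_s$, and since $\Delta_f+C$ annihilates $E_C$, the first-order part of $\theta_{s,0}$ has no $E_C$-component, so $\|\theta^{(1)}_{s,0}\|=O(\epsilon_0^2)$; meanwhile the essential condition of Lemma \ref{essential} (equivalently $P_{>C}u_0\ne0$ in the second-variation formula of Corollary \ref{hmin}) forces $\|\nabla\theta_{s,0}\|\sim\epsilon_0$ to be genuinely nonzero. A clean way to package the propagation in time, and the route I would actually take, is by contradiction via Proposition \ref{cptmcf}: if the gap failed for all $\epsilon_0,\delta_0$, take $\epsilon_j,\delta_j\to0$, flows with $|B|\le\Lambda$, $|K|\le\epsilon_j$, and times $t_j$ of failure, normalise the angles by $\|\nabla\theta_j\|_{L^2_f}=1$, and extract a limit flow; since $|K|\to0$ the limit is a static $f$-minimal Lagrangian $L_\infty$, and the $H^2$ bound from $\int|\Delta_f\hat\theta_j|^2\le C+\delta_j$ gives (by compactness of $H^2\hookrightarrow H^1$) a nonconstant $\hat\theta_\infty$ with $\int|\Delta_f\hat\theta_\infty|^2\le C\int|\nabla\hat\theta_\infty|^2$, which forces $\hat\theta_\infty\in E_C(L_\infty)$; the smallness of the first-eigenspace component then yields $\hat\theta_\infty\perp E_C(L_\infty)$, a contradiction.

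The hardest and most delicate point is precisely this uniform-in-time control of $\theta^{(1)}$. Because $\lambda_1(L_0)=C$ is exactly the degenerate (volume-preserving) eigenvalue, the $E_C$-mode of $\theta$ neither decays nor is a priori negligible: along the flow $\theta^{(\perp)}$ decays like $e^{-(\mu-C)t}$ using \eqref{angle}, whereas $\theta^{(1)}$ drifts only at the rate $C-\lambda_1(t)$, which by Lemma \ref{ef} is $o(1)$ and of uncertain sign, so over a long interval $[0,T]$ the small but persistent $\theta^{(1)}$ could come to dominate the decaying $\theta^{(\perp)}$. Keeping $\|\theta^{(1)}\|\ll\|\nabla\theta\|$ for all $t\in[0,T]$ is where the essential-deformation hypothesis and the smallness $\epsilon_0$ must be made to interact, and is exactly the step where Li's argument in \cite{Li} has to be adapted to the weighted setting and to the borderline spectral datum $\lambda_1=C$.
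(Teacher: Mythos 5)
Your proposal takes essentially the same route as the paper: the paper's proof of Lemma \ref{esse} is a one-sentence deferral to Lemma 6.6 of \cite{Li}, to be translated using exactly the ingredients you invoke --- Lemma \ref{dL2} (i.e.\ Proposition \ref{2ndvf}) for the ``Moreover'' estimate, the angle variation formulas \eqref{lagvar1} and \eqref{angle}, Lemma \ref{essential} for the initial orthogonality/nonvanishing, and the compactness Proposition \ref{cptmcf} for the contradiction argument. The uniform-in-time control of the first-eigenspace component of $\theta$ that you single out as the crux is precisely the part of the argument the authors leave entirely to the citation of \cite{Li}, so your sketch is, if anything, more detailed than the paper's own proof.
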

\begin{proof}
We generalize the proof of Lemma 6.6 in \cite{Li}. The only thing we have to do is to translate the argument in \cite{Li} into our case by using Proposition \ref{2ndvf}, variation formulas \eqref{lagvar1}, \eqref{angle} for the Lagrangian angle, Lemma \ref{essential} and Proposition \ref{cptmcf}.  
\end{proof}

Finally, we obtain the following long-time existence and convergence result starting from the initial data sufficiently close to an $f$-stable Lagrangian. This extend the result by \cite{Li}. 
\begin{theorem} \label{main2}
Let $(M, \omega, J)$ be a compact K\"ahler manifold satisfying $\rho=C\omega+ndd^cf$ with $C>0$. 
Suppose that $\phi: L\rightarrow M$ is a compact $f$-minimal Lagrangian submanifold with $\lambda_1=C$, and $\phi_s$ is an essential Hamiltonian variation of $L_0=\phi(L)$ with $X$ as the variation vector field. Then there exists $\epsilon_0=\epsilon_0(X, L_0, M)>0$ such that if $L_s=\phi_s(L)\subset M$ satisfies  
\begin{align*}
||\phi_s-\phi_0||_{C^3}\leq \epsilon_0,  
\end{align*}
then GLMCF with the initial Lagrangian $L_s$ will converge exponentially fast to an $f$-minimal Lagrangian in $M$. 
\end{theorem}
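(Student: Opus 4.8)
The plan is to run essentially the same bootstrap machine as in the proof of Theorem \ref{cvthm}, with the single crucial modification that the weighted $L^2$ decay estimate of Lemma \ref{dL2} — which in the borderline case $\lambda_1=C$ only produces a nonnegative rate — is replaced by the genuinely negative rate supplied by Lemma \ref{esse}. First I would set up the flow $L_{s,t}=\phi_{s,t}(L)$ starting from $L_s=\phi_s(L)$ and record that, because $L_0$ is $f$-minimal, each initial datum $\alpha_{K_{s,0}}$ is exact by Proposition \ref{monotone}, and exactness is preserved along GLMCF by Proposition \ref{exact}; hence the Lagrangian angle $\theta_{s,t}$ exists globally and satisfies $\nab\theta_{s,t}=JK_{s,t}$. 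The smallness hypothesis $\|\phi_s-\phi_0\|_{C^3}\leq\epsilon_0$ furnishes the initial control $|B_s|(0)\leq\Lambda$ and $|K_s|(0)\leq\epsilon_0$ (note $K_{s,0}\to 0$ as $\epsilon_0\to 0$, since $K=0$ on the $f$-minimal $L_0$), and compactness of $L$ makes the initial hypersurface $\kappa$-noncollapsed on some scale $r$.

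The heart of the argument is a continuity argument identical in shape to Claim \ref{keyclaim}. Let $[0,T_*)$ be the maximal interval on which $L_{s,t}$ satisfies $|B|(t)\leq\Lambda$, $|K|(t)\leq\epsilon_0$, and the $\kappa$-noncollapsing on scale $r$. On this interval Lemma \ref{esse} applies and yields
\[
\frac{d}{dt}\int_{L_{s,t}}|K_{s,t}|^2\dmu_f\leq-2(\delta_0-\Lambda\epsilon_0)\int_{L_{s,t}}|K_{s,t}|^2\dmu_f,
\]
so that, choosing $\epsilon_0$ small enough that $\delta_0-\Lambda\epsilon_0>0$, the weighted $L^2$ norm decays exponentially:
\[
\int_{L_{s,t}}|K_{s,t}|^2\dmu_f\leq e^{-2(\delta_0-\Lambda\epsilon_0)t}\int_{L_{s,0}}|K_{s,0}|^2\dmu_f.
\]
I would then interpolate this $L^2$ decay into pointwise decay exactly as in Section 6: the higher-order estimates of Lemma \ref{horder} bound $|\nab B|$, and hence $|\nab K|$, uniformly on $[\tau,T_*)$; the noncollapsing of Lemmas \ref{inje} and \ref{kpnon} is maintained; and Lemma \ref{zero} upgrades the $L^2$ decay to a bound of the form $|K|(t)\leq\epsilon_0^{1/(n+2)}e^{-ct}$. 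Feeding the resulting decay of $|\nab^2K|$ into the evolution inequality \eqref{dtBr} and integrating then improves $|B|(t)\leq\Lambda$ strictly; together with the sharpened $|K|$ and noncollapsing bounds this contradicts the maximality of $T_*$ unless $T_*=\infty$. Long-time existence then follows from Proposition \ref{long}, and the exponential $C^0$ decay of $K$ forces convergence to an $f$-minimal Lagrangian.

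The main obstacle — and the place where the hypotheses genuinely enter — is justifying the strictly negative rate of Lemma \ref{esse} along the whole flow, i.e. maintaining the strict inequality $\int_{L_{s,t}}|\Delta_f\theta_{s,t}|^2\dmu_f\geq(C+\delta_0)\int_{L_{s,t}}|\nab\theta_{s,t}|^2\dmu_f$. Since $\lambda_1=C$, this inequality degenerates precisely when $\theta_{s,t}$ concentrates in the first eigenspace $E_{\lambda_1}$; the essential Hamiltonian hypothesis, via Lemma \ref{essential} (so that $u_0\notin E_{\lambda_1}$), guarantees that $\theta_{s,0}$ carries a definite component transverse to $E_{\lambda_1}$, and I expect the delicate point to be propagating this transversality forward under the flow. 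This is exactly where the compactness result Proposition \ref{cptmcf} is needed: were the spectral inequality to fail for a sequence $\epsilon_0\to 0$, one would extract a limiting GLMCF emanating from $L_0$ along the essential variation for which the second-variation positivity \eqref{essHam} is contradicted, via Proposition \ref{2ndvf} together with the evolution equations \eqref{lagvar1} and \eqref{angle} for the Lagrangian angle.
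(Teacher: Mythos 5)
Your proposal is correct and takes essentially the same route as the paper: the paper's proof of Theorem \ref{main2} consists precisely of invoking Lemma \ref{esse} to replace the spectral-gap hypothesis $\lambda_1\geq C+\delta_0$ used in Theorem \ref{cvthm}, and then rerunning the bootstrap/continuity argument of that theorem unchanged. Your closing discussion of how the strict spectral inequality is propagated (via Lemma \ref{essential} and the compactness result Proposition \ref{cptmcf}) also matches where the paper places that difficulty, namely inside the proof of Lemma \ref{esse} itself rather than in the proof of Theorem \ref{main2}.
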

\begin{proof}
Thanks to the Lemma \ref{esse}, the same argument as the proof of Theorem \ref{cvthm} is also valid for this case. See also the proof of Theorem 1.4 in \cite{Li} (Section 6). 
\end{proof}

\end{document}